\newtheorem{thm}{Theorem}
\newtheorem*{thmm}{Main Theorem}
\newtheorem*{claim*}{Claim}
\newtheorem{theorem}[thm]{Theorem}
\newtheorem{lemma}[thm]{Lemma}
\newtheorem{corollary}[thm]{Corollary}
\newtheorem{proposition}[thm]{Proposition}
\theoremstyle{definition}
\newtheorem*{definition*}{Definition}
\newtheorem{remark}[thm]{Remark}
\newcommand{\al}{\alpha}
\newcommand{\be}{\beta}
\newcommand{\ga}{\gamma}
\newcommand{\kai}{\mathfrak{X}}
\newcommand{\cn}{\mathcal{C}(N_{g,n})}
\newcommand{\CC}{\mathcal{C}}
\newcommand{\gap}{\mathcal{A}(P)}
\newcommand{\gapf}{\mathcal{A}(\phi(P))}
\newcommand{\Mod}{{\rm Mod}}
\newcommand{\Aut}{{\rm Aut}}
\newcommand{\dela}{\delta_{\alpha}}
\newcommand{\delb}{\delta_{\beta}}
\newcommand{\kaign}{\kai_{g,n}}
\newcommand{\Ngn}{N_{g,n}}
\newcommand{\kgn}{\mathfrak{X}_{g,n}}
\newcommand{\link}{\mbox{Link}}
\begin{document}

\title[Finite Rigid Sets in Curve Complexes]
{Finite Rigid Sets in Curve Complexes of Non-Orientable Surfaces}
\author[S. Ilb{\i}ra ]{SABAHATT\.{I}N ILBIRA}
\address{Department of Mathematics, Amasya University, 05000 Amasya, Turkey}
\email{sabahattin.ilbira@amasya.edu.tr}

\author[M. Korkmaz]{Mustafa Korkmaz}
\address{Department of Mathematics, Middle East Technical University, 06800 Ankara, Turkey}
\email{korkmaz@metu.edu.tr}

\date{\today}

\begin{abstract}
A rigid set in a curve complex of a surface is a subcomplex 
such that every locally injective simplicial map from the set
into the curve complex is induced by a homeomorphism of the surface. 
In this paper, we find finite rigid sets 
in the curve complexes of connected non-orientable surfaces 
of genus $g$ with $n$ holes for $g+n \neq 4$. 
\end{abstract}

\maketitle

\setcounter{secnumdepth}{3}
\setcounter{section}{0}

\section{Introduction} 
 
 The curve complex on a surface, introduced by Harvey~\cite{harv81} in order to study the boundary structure 
 of the Teichm\"uller space, is an abstract simplicial complex whose vertices are isotopy classes of simple
 closed curves.  The mapping class group of the surface acts on it by simplicial 
 automorphisms in a natural way, inducing an isomorphism, except for a few sporadic cases, from the mapping class group
 onto the group of simplicial automorphisms of the curve complex. These results are proved by
 Ivanov~\cite{ivanov97}, Korkmaz~\cite{korkmaz99} and Luo~\cite{luo00} in the orientable case,
 and by Atalan-Korkmaz \cite{atako14} in the non-orientable case.
 
 In the case the surface is orientable, it was shown by Shackleton~\cite{shack07} that a locally injective 
 simplicial map of the curve complex is induced by a homeomorphism.  Aramayona-Leininger~\cite{aramlei13}  
 showed that the curve complex contains a finite subcomplex $\kai$ such that any 
 locally injective simplicial map from $\kai$ into the curve complex is induced by a 
 homeomorphism. They called these sets \textit{finite rigid sets}.

The purpose of this paper is to prove the non-orientable version of  Aramayona-Leininger result: We prove that
if  $g+n\neq 4$ then the curve complex of a connected non-orientable surface $N$ of genus $g$ with $n$ 
boundary components contains a finite rigid set. More precisely, we prove the following theorem:

 \begin{thmm} \label{mainthmintro} 
       Let $g+n\neq 4$ and let $N$ be a connected non-orientable  surface of genus $g$ with $n$ 
       boundary components. There is a finite subcomplex $\kai_{g,n}$ of the curve complex \
       $\CC(N)$ of $N$  such that for any locally injective 
       simplicial map $\phi:\kai_{g,n} \rightarrow \CC(N)$ there is a homeomorphism $F:N\rightarrow N$
       with $\phi(\gamma)=F(\gamma)$ for all $\gamma\in \kaign$. The homeomorphism is unique 
       up to an involution.
 \end{thmm}

This theorem is restated and proved as Theorem~\ref{mainxx} in Section~\ref{sec=mainthm}. The proof 
of it is by induction on the genus $g$. 
We also show at the end of the paper that in the case $g+n=4$ the subcomplex  $\kai_{g,n} $ 
we consider is not rigid.
 
 After Aramayona-Leininger introduced the finite rigid sets 
 in the curve complex, Maungchang~\cite{maung13} obtained 
 finite rigid subgraphs of the pants graphs of punctured spheres. 
 Aramayona-Leininger in~\cite{aramlei16}  
 constructed a sequence of finite rigid sets exhausting 
 curve complex of an orientable surface, and Hern\'andez-Hern\'andez~\cite{hernexhaustion16} 
 obtained an exhaustion of the curve complex via rigid expansion. 
 In particular, they show that for an orientable surface $S$,  there exists a sequence of finite rigid sets  
 each having trivial pointwise stabilizer in the mapping class group so that the union of these sets is the 
 curve complex of $S$.

 For non-orientable surfaces, the curve complex was first studied by Scharlemann \cite{schar82}. 
  Atalan-Korkmaz~\cite{atako14} proved that 
 the natural map from the mapping class group $\Mod(N)$ of a connected non-orientable surface 
 $N$ of genus $g$ with $n$ holes to the automorphism group $\Aut(\CC(N))$ 
 of the curve complex  of $N$ is an isomorphism for $g+n\geq5$,
 Irmak generalized this result by showing that superinjective simplicial maps and injective simplicial maps $\CC(N)\rightarrow\CC(N)$ are induced by homeomorphisms $N\rightarrow N$ (cf. ~\cite{irmak14edge} and \cite{irmak14sup}).  
 
 Here is an outline of the paper. Section~\ref{prelim} contains the relevant definitions and the background. 
 The model of the non-orientable surface and the curves used in the paper are introduced in this section. 
 Section~\ref{sec:subcomplex} introduces the finite subcomplex $\kaign$ which will be showed to be rigid.  
 Section~\ref{sec:types} focuses on the topological types of vertices in the finite subcomplex $\kaign$ 
 and shows that topological types of many elements of $\kaign$ are preserved under 
 locally injective simplicial maps $\kaign\to \CC(N)$.   
 Section~\ref{sec=mainthm} is devoted to the proof of the Main Theorem: The proof is by induction on the genus of the surface.  It is shown in the last section,
 Section~\ref{sec:son}, that the complex $\kaign$ is not rigid in the case $g+n=4$.
 
 The main result of this paper was part of the first author's Ph.D. thesis~\cite{ilb17} 
 at Middle East Technical University. We learned at the ICM 2018 satellite conference 
 `Braid Groups, Configuration Spaces and Homotopy Theory' held in Salvador, Brazil, that 
 Blazej Szepietowski has independently obtained similar results.
 The authors would like to thank Szepietowski for pointing out a mistake in the
  earlier version. The first author would like to thank to Mehmetcik Pamuk for helpful conversations.
  The second author thanks to UMASS Amherst Math. Department for its
  hospitality, where the writing of this paper is completed.

\section{Preliminaries}  \label{prelim}

In this section, we give the basic definitions and some facts on curve complexes. 
We introduce our model representing a non-orientable surface 
and the notation of the simple closed curves on it that will be used throughout the paper. 

For the isotopy classes $\al$ and $\be$ of two simple closed curves on a surface,
the \textit{geometric intersection number} of $\al$ and $\be$ is defined as
 \[
    i(\al,\be)=\min \{\mbox{Card}(a\cap b) \ |\ a\in \al, b\in \be \},
 \] 
  where $\mbox{Card}(a\cap b)$ denotes the cardinality of  $a\cap b$.  We say that
  $\al$ and $\be$ are \textit{disjoint} if $i(\al,\be)=0$.
  
 Throughout the paper, we will use the terms `hole' and `boundary component' to mean the same thing.
For simplicity, we will often confuse a curve and its isotopy class. 
It will be clear from the context which one is meant.   Similarly, a homeomorphism and its isotopy class
will be confused.

\subsection{Abstract simplicial complex} \ 

 An abstract simplicial complex on a set (of vertices) $V$ is a subset $K$ of
 the set of all subsets of $V$ satisfying the followings:
 \begin{itemize}
    \item[$\bullet$] $K$ does not contain the empty set $\emptyset$, 
    \item[$\bullet$] If $v\in V$, then $\{ v\} \in K$,
    \item[$\bullet$] If $\sigma\in K$ and if $\tau\subset \sigma$ with $\tau\neq \emptyset$, 
    		then $\tau\in K$.
  \end{itemize}
 An element of $K$ is called a \textit{simplex} of $K$.  A \textit {face} of a simplex 
 $\sigma$ in $K$ is a nonempty subset of $\sigma$.
 The \textit{dimension} of a simplex $\sigma$ of $K$ is defined as $|\sigma|-1$, and
 the \textit{dimension} of $K$ is the supremum of the dimensions of all simplices in $K$. 
 An element $v\in V$, identified with the $0$-simplex $\{ v\}$, 
 is called a  \textit{vertex} of $K$. An \textit{edge} 
 in $K$ is a $1$-simplex. If $\{ v_1,v_2\}$ is an edge, 
 we say that $v_1$ and $v_2$ are connected by an edge.
 
 A \textit{subcomplex} $L$ of a simplicial complex $K$ is a simplicial complex on a subset of $V$
 such that if $\sigma\in L$ then $\sigma\in K$. A subcomplex $L$ of $K$ is called a \textit{full subcomplex}
 if whenever a set of vertices in $L$ is a simplex in $K$, then it is also a simplex in $L$.
 
 For a vertex $v$ of $K$, the \textit{link}, $\link_K(v)$, of $v$ is the full subcomplex of $K$ on the set
 of vertices connected $v$ by an edge, and  the \textit{star} of $v$ is the full subcomplex
 containing $v$ and $\link_K(v)$, Note that  $\link_K(v)$ is a subcomplex of the star of $v$.

\subsection{Curve Complex $\CC(S)$} \ 
The curve complex $\CC(S)$ on a compact connected orientable surface $S$, introduced by Harvey~\cite{harv81},
is the abstract simplicial complex whose vertices are isotopy classes of nontrivial simple closed curves on $S$, and
a set of vertices $\{\al_0,\al_1,\ldots,\al_q\}$ forms a $q$-simplex if $i(\al_j,\al_k)=0$ for all $j,k$. 
A simple closed curve $a$ on $S$ is \textit{nontrivial} if it does not bound  a disc with at most one hole.

By an easy Euler characteristic argument, it is easy to see that all maximal simplices of $\CC(S)$
contains $3g+n-3$ elements (except for a few sporadic cases), 
so that the dimension of $\CC(S)$ is $3g+n-4$. Here, $g$ is the 
genus and $n$ is the number of boundary  components of $S$.

\subsection{Curve Complex $\CC(N_{g,n})$} \ 

Let $N=N_{g,n}$ be a compact connected non-orientable surface of genus $g$ with $n$ 
holes. 
The \textit{genus} of a non-orientable surface is defined as the maximum number 
of real projective planes in a connected sum decomposition. Equivalently, it is the maximum number 
of disjoint simple closed curves on $N$ whose complement is connected. Clearly, this definition of the
genus works for connected orientable surfaces as well.
By convention, 
we assume that a sphere is a non-orientable surface of genus $0$. 

Let $a$ be a simple closed curve on $N$. 
We denote by $N_a$ the surface obtained 
by cutting $N$ along $a$. The curve $a$ is called \textit{nonseparating} 
(respectively \textit{separating}) if $N_a$ is connected (respectively disconnected), and
 is called \textit{nontrivial} 
if it bounds neither a disc at most one hole nor a M\"{o}bius band.  
The curve $a$ is called \textit{two-sided} (respectively \textit{one-sided}) 
if a regular neighborhood of it is an annulus (respectively a M\"{o}bius band). 
When $a$ is one-sided, $a$ is called \textit{essential} if either $g=1$, or $g\geq 2$ and $N_a$ is 
non-orientable. If $a$ is separating,  then we  say that it is of \textit{type $(p,q)$} 
if $N_a$ is the disjoint union of two non-orientable surfaces one of which 
is of genus $p$ with $q+1$ holes. 
In particular, if $N_a$ is the disjoint union of a sphere with $ q+1$ holes and a non-orientable
surface of genus $g$ with  $n-q+1$ holes, that is 
if $a$ bounds a disk with $q$ holes, 
then we say that $a$ is of type $(0,q)$. Note that a simple closed curve of type $(p,q)$ 
is also of type $(g-p,n-q)$. We usually assume that $p\leq g-p$.

	The \textit{curve complex} $\CC(N)$ of the non-orientable surface $N$ 
	is is defined similarly to the orientable case; it is 
	an abstract simplicial complex with 
	vertices
	\[\{\al\ |\ \al \ \textrm{is the isotopy class of a nontrivial simple closed curve on} \ N \}.
	\]
   A  $q$-simplex of $\CC(N)$ is defined as a set of $(q+1)$ distinct vertices  
   $\{\al_0,\al_1,\ldots, \al_q\}$ such that $i(\al_j,\al_k)=0$ for all $j\neq k$.
   We note that $i(\alpha,\alpha)=1$ if $\alpha$ is one-sided, so that we require $j\neq k$.

 We finish this section with the following result stating  that the natural simplicial action of $\Mod(N_{g,n})$ 
 on $\CC(N_{g,n})$ induces an isomorphism:

\begin{theorem}{\cite[Theorem 1]{atako14}}\label{mod=aut}
	For $g+n\geq 5$, the natural map $\Mod(N_{g,n})\to \Aut(\CC(N_{g,n}))$ is an isomorphism.
\end{theorem}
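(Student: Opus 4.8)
The plan is to follow Ivanov's strategy for the orientable case, while carefully accounting for the two features absent there: one-sided curves, and the variety of separating types. Write $N=\Ngn$ and let $\eta\colon\Mod(N)\to\Aut(\CC(N))$ denote the natural homomorphism. The first step is injectivity, i.e., that the action of $\Mod(N)$ on $\CC(N)$ is faithful when $g+n\geq5$: if $f\in\Mod(N)$ fixes the isotopy class of every nontrivial simple closed curve, then it fixes a family of curves that fills $N$ (one whose complementary regions are disks and once-holed disks), and a non-orientable version of the Alexander method forces $f$ to be isotopic to the identity. The hypothesis $g+n\geq5$ is exactly what removes the sporadic low-complexity surfaces on which this can fail.

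The substance of the argument is surjectivity, and its heart is showing that any $\phi\in\Aut(\CC(N))$ preserves the topological type of each vertex. Since $\phi$ carries $\link(\alpha)$ isomorphically onto $\link(\phi(\alpha))$, and the link of a vertex $\alpha$ is the curve complex of the cut surface $N_\alpha$ (a join of such complexes when $\alpha$ is separating), one can read off the homeomorphism type of $N_\alpha$ — hence whether $\alpha$ separates, and if so its type $(p,q)$ — from combinatorial data such as the dimension of the link and its number of join factors. Recognizing one-sided curves is the genuinely new ingredient: one needs a purely combinatorial characterization, for instance that an essential one-sided curve $\mu$ has $\link(\mu)\cong\CC(\Ngm)$, whose genus/Euler-characteristic invariants distinguish it from the link of any two-sided non-separating curve, or equivalently that $\mu$ cannot be completed to certain configurations that only two-sided curves admit. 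Setting up this dictionary shows $\phi$ permutes each topological stratum of vertices.

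With type-preservation established, I would fix a reference configuration $P$ of curves on $N$ — say a maximal simplex together with a few auxiliary curves, chosen so that the identity is the only homeomorphism of $N$ (up to isotopy) fixing every curve of $P$. By the previous step, $\phi(P)$ has the same intersection pattern, the same vertex types, and the same complementary pieces as $P$; the change-of-coordinates principle for non-orientable surfaces then produces a homeomorphism $F\colon N\to N$ with $F(\gamma)=\phi(\gamma)$ for all $\gamma\in P$. After replacing $\phi$ by $\eta(F)^{-1}\circ\phi$ we may assume $\phi$ fixes $P$ pointwise, and then an induction outward from $P$ through $\CC(N)$ — propagating $\phi=\mathrm{id}$ across edges, using that the restriction of $\phi$ to each link is again a type-preserving automorphism — gives $\phi=\mathrm{id}$, so $\phi=\eta(F)$.

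I expect the main obstacle to be the combinatorial recognition of one-sided curves and of the separating types, done uniformly for all $(g,n)$ with $g+n\geq5$: this is precisely where the non-orientable argument departs from Ivanov's, and it is also what forces the small cases $g+n\leq4$ to be set aside and examined separately.
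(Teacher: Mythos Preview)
This theorem is not proved in the present paper at all: it is quoted from \cite{atako14} (Atalan--Korkmaz) and used as background input, so there is no proof here against which to compare your proposal. The paper's own contribution is the finite rigid set $\kaign$, and Theorem~\ref{mod=aut} is invoked only to remark that $\CC(N)$ itself is rigid.

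That said, your outline is broadly the Ivanov-style strategy that \cite{atako14} in fact implements, so as a sketch it is on the right track. Two places deserve more care before it could stand as a proof. First, your link argument for type-preservation is too coarse as stated: the link of a vertex $\alpha$ in $\CC(N)$ is \emph{not} literally $\CC(N_\alpha)$ when $\alpha$ is one-sided, because the curve on $N_\alpha$ parallel to the new boundary component may be trivial or nontrivial depending on $(g,n)$, and more importantly the join structure alone does not immediately distinguish, say, an essential one-sided curve from a two-sided nonseparating curve with orientable complement when both yield connected links; one needs finer invariants such as the spectrum of dimensions of maximal simplices in the link (cf.\ Lemma~\ref{numberofcurve}). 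Second, the ``induction outward from $P$'' step hides real work: propagating $\phi=\mathrm{id}$ from a finite configuration to all of $\CC(N)$ requires a connectivity argument (e.g., that any curve can be reached from $P$ by a sequence of elementary moves each determined by already-fixed curves), and this is where the actual labor in \cite{atako14} lies.
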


\smallskip
\subsection{Top Dimensional Maximal Simplices} \
A set $P$ consisting of pairwise disjoint, nonisotopic nontrivial simple closed curves on $N$
 is called a \textit{pair of pants decomposition} of $N$, or simply a
  \textit{pants decomposition}, if each component of the surface obtained by cutting $N$
  along the curves in $P$ is a pair of pants, i.e. a three-holed sphere.
 For a connected surface of negative Euler characteristic, there is a bijection between maximal 
 simplices of the curve complex and 
 pants decompositions of the surface. In the case the surface is orientable 
 of genus $g$ with $n$ holes,  
 all maximal simplices of the curve complex have the same dimension $3g+n-4$.
 But, in the non-orientable case, dimensions of the maximal simplices 
 are  not constant when the genus is at least $2$.

\begin{lemma}\cite[Proposition 2.3]{atako14} \label{numberofcurve} 
	Let $N$ be a connected non-orientable surface of genus $g\geq 2$ with $n$ holes. 
	Suppose that $(g,n) \neq (2,0) $.  Let $a_r=3r+n-2$ and $b_r=4r+n-2$ if $g=2r+1$, and 
	$a_r=3r+n-4$ and $b_r=4r+n-4$ if $g=2r$. 
	Then, there is a maximal simplex of dimension $q$ in $\CC(N) $ 
	if and only if $a_r \leq q \leq b_r $. 
	In particular, there are precisely $\lceil{g/2}\rceil$ values 
	which occur as the dimension of a maximal simplex, 
	where $\lceil{g/2}\rceil$ denotes the smallest integer greater than $g/2$.
\end{lemma}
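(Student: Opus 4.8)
The plan is to analyze a pants decomposition $P$ of $N = N_{g,n}$ and count the curves in $P$ by tracking how many one-sided curves it contains. The key observation is that once we cut along a maximal collection of disjoint one-sided curves, we are left with an orientable surface (with boundary), where pants decompositions have a fixed size. So the range of possible dimensions comes exactly from the range of possible numbers of one-sided curves appearing in $P$.

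First I would recall the Euler characteristic: $\chi(N_{g,n}) = 2 - g - n$, since $N_{g,n}$ is a sphere with $g$ cross-caps and $n$ holes removed. A pants decomposition cuts $N$ into pieces each homeomorphic to a three-holed sphere, each of Euler characteristic $-1$; if $P$ has $q+1$ curves, then cutting produces a surface whose Euler characteristic is still $2-g-n$ but which is a disjoint union of pairs of pants. Each two-sided curve in $P$ contributes $2$ to the total boundary count when cut, while each one-sided curve contributes only $1$ (cutting along a one-sided curve opens up a single boundary circle). Combining the Euler characteristic bookkeeping with the boundary-count bookkeeping gives a linear relation: if $P$ contains $k$ one-sided curves and $m$ two-sided curves, then the number of pairs of pants is $2-g-n \over -1$... more precisely one gets $2m + k + n = 3(\text{number of pants})$ and $(q+1) = k+m$, together with $\#\text{pants} = -\chi = g+n-2$ (for the relevant ranges), which solves to express $q+1$ in terms of $k$ alone: $q = m + k - 1$ with $m$ determined by $k$. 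This pins down $q$ as an affine function of $k$.

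Next I would determine the possible values of $k$, the number of disjoint one-sided curves in $P$. A collection of $k$ disjoint one-sided curves whose complement is still a surface admitting a pants decomposition: cutting along $k$ disjoint one-sided curves turns $N_{g,n}$ into $N_{g-k, n+k}$ when $k < g$ and the result is non-orientable, or into an orientable surface when $k$ removes enough cross-caps. The maximum is $k = g$ (cut along $g$ disjoint one-sided curves to get a sphere with $g+n$ holes — but one must check which of these are nontrivial/essential and genuinely disjoint; the standard model gives $\lfloor g/2 \rfloor$ or so). Actually the clean way: the minimum number of one-sided curves is $1$ if $g$ is odd and $0$ or $2$ considerations if $g$ is even (since cutting along one one-sided curve changes the parity of orientability type). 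I would set $g = 2r+1$ or $g = 2r$ and carefully enumerate: in the odd case $k$ ranges over $\{1, 3, \dots\}$ up to $g$, giving $r+1 = \lceil g/2 \rceil$ values; in the even case $k$ ranges over $\{0, 2, \dots, g\}$ or shifted, again giving $\lceil g/2 \rceil$ values. Translating the endpoints of the $k$-range through the affine relation $q = q(k)$ yields exactly $a_r \le q \le b_r$ with the stated formulas, and the count $\lceil g/2 \rceil$ of attained values follows since consecutive admissible $k$ differ by $2$ but... I would need to check whether $q(k)$ is injective on the admissible $k$ or whether it hits every integer in $[a_r, b_r]$; the statement claims every integer in the range occurs, so the affine map must actually have slope such that incrementing $k$ by its admissible step moves $q$ by $1$ — I would verify this matches $b_r - a_r = r$ against the number of steps.

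The main obstacle will be the careful case analysis showing \emph{which} configurations of one-sided curves actually embed disjointly in $N_{g,n}$ and remain nontrivial, and confirming the endpoints: one must construct explicit pants decompositions realizing both $q = a_r$ and $q = b_r$ (and the intermediate values), and conversely show no pants decomposition can have fewer or more curves. The construction at the extremes uses the standard model of $N_{g,n}$ as a disk with cross-caps: for $b_r$ one maximizes one-sided curves, for $a_r$ one minimizes them (grouping cross-caps into Klein-bottle-like pieces handled by two-sided curves). The exclusion $(g,n) \neq (2,0)$ and $g \geq 2$ is exactly to avoid the Klein bottle and low-complexity degeneracies where pants decompositions behave anomalously (the Klein bottle admits no pants decomposition in the usual sense). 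I would handle the bookkeeping once and for all with the Euler characteristic identity and then spend the bulk of the argument on the realizability of the endpoint configurations in the explicit model.
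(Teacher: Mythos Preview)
The paper does not actually prove this lemma: it is stated with a citation to \cite[Proposition~2.3]{atako14} and no proof is given, so there is nothing in this paper to compare your argument against directly.

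That said, your approach is the standard one and is essentially what Atalan--Korkmaz do in the cited paper. The Euler-characteristic bookkeeping you outline is correct: with $k$ one-sided and $m$ two-sided curves in $P$, one has $2m + k + n = 3(g+n-2)$ and $q+1 = k+m$, giving
\[
q+1 \;=\; \frac{3g+2n-6+k}{2}.
\]
This forces $k \equiv g \pmod 2$, and as $k$ runs over $\{1,3,\dots,g\}$ (odd $g=2r+1$) or $\{0,2,\dots,g\}$ (even $g=2r$) the value $q$ runs exactly over the integers in $[a_r,b_r]$, with the step in $k$ of size $2$ translating to a step in $q$ of size $1$. So your worry about injectivity versus surjectivity resolves cleanly once you write the formula down.

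Where your write-up wobbles is the realizability discussion. Your parenthetical ``the standard model gives $\lfloor g/2\rfloor$ or so'' for the maximum number of disjoint one-sided curves is off: the maximum is $g$, realized by the core curves of the $g$ cross-caps in the standard model, and that is exactly what gives $q=b_r$. At the other extreme, for even $g$ one reaches $k=0$ by first cutting along $r$ disjoint two-sided nonseparating curves with orientable complement (turning $N_{2r,n}$ into an orientable surface), and for odd $g$ one reaches $k=1$ by a single one-sided curve with orientable complement. These explicit constructions, together with the parity constraint ruling out other $k$, are the substance of the argument; once you commit to them the endpoint formulas for $a_r$ and $b_r$ drop out. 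The exclusion $(g,n)=(2,0)$ is needed because the Klein bottle has no pants decomposition at all.
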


Note that the lemma holds true for non-orientable surfaces of genus $0$ and $1$ as well:
if $g=0$ then $r=0$ and $a_0=b_0=n-4$, and if $g=1$ then $r=0$ and $a_0=b_0=n-2$.

 We say that a pants decomposition $P$ on a non-orientable surface 
 is \textit{top dimensional} if its dimension is equal to $b_r$, so
 that it contains $b_r+1$ elements. 
 A top dimensional pant decomposition contains exactly $g$ essential one-sided curves 
 (cf. the proof of
 Lemma~\ref{numberofcurve} above given in ~\cite{atako14}).
 By Lemma~\ref{numberofcurve}, we have the following corollary. 

\begin{corollary}\label{toppants}
        Suppose that $g+n\geq 3$ and that $P$ is a top dimensional pants 
        decomposition of $N$. Then 
	$P$ contains $2g+n-3$ simple closed curves, exactly $g$ of which are 
	essential one-sided.  
\end{corollary}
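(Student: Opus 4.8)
The plan is to derive both assertions directly from Lemma~\ref{numberofcurve} together with the parenthetical fact, already recorded just above the corollary, that a top dimensional pants decomposition contains exactly $g$ essential one-sided curves. First I would dispose of the low-genus cases $g=0$ and $g=1$ by hand: when $g=0$, the surface $N_{0,n}$ is a sphere with $n\geq 3$ holes, every pants decomposition has $n-3=2g+n-3$ curves, all two-sided, and there are no one-sided curves, consistent with ``$g$ of which are one-sided''; when $g=1$, $N_{1,n}$ is a real projective plane with $n$ holes, a top dimensional (indeed the only maximal) pants decomposition has $n-1=2g+n-3$ curves, exactly one of which is the unique one-sided curve. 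The remaining, main case is $g\geq 2$, where one must also separately check $(g,n)=(2,0)$ since Lemma~\ref{numberofcurve} excludes it; there $N_{2,0}$ is the Klein bottle, which admits a pants decomposition with a single two-sided curve and no pants pieces of the expected kind, so I would confirm the count $2g+n-3=1$ and note that the ``top dimensional'' convention there forces the one one-sided... — more carefully, $N_{2,0}$ is handled by inspection to match $2\cdot 2+0-3=1$ curve, and I would check the one-sided count directly.

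For the generic case $g\geq 2$ with $(g,n)\neq(2,0)$, the argument is a short computation. By definition a top dimensional pants decomposition $P$ has dimension $b_r$, hence $|P| = b_r + 1$. Writing $g=2r+1$ gives $|P| = (4r+n-2)+1 = 4r+n-1 = 2(2r+1)+n-3 = 2g+n-3$, and writing $g=2r$ gives $|P| = (4r+n-4)+1 = 4r+n-3 = 2(2r)+n-3 = 2g+n-3$. In both parities we obtain $|P| = 2g+n-3$, which is the first claim. The second claim, that exactly $g$ of these curves are essential one-sided, is precisely the fact quoted from the proof of Lemma~\ref{numberofcurve} in~\cite{atako14}, so it requires no further argument here beyond citing it.

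The only genuine obstacle is bookkeeping at the boundary of the hypotheses: making sure the formula $2g+n-3$ is verified in the small cases $g\in\{0,1\}$ and $(g,n)=(2,0)$ that Lemma~\ref{numberofcurve} does not directly cover, and confirming that the one-sided count is still $g$ there (which is immediate, since any complement-connecting system of disjoint one-sided curves realizing the genus can be completed to a pants decomposition). Once these finitely many surfaces are checked, the general case is the displayed arithmetic above, and the corollary follows.
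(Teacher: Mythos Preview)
Your approach is correct and is exactly what the paper intends: the corollary is stated without proof as an immediate consequence of Lemma~\ref{numberofcurve}, and your computation of $b_r+1=2g+n-3$ in both parities, together with the cited fact about the $g$ essential one-sided curves, is precisely the implicit argument. One cleanup: the case $(g,n)=(2,0)$ satisfies $g+n=2<3$ and is therefore excluded by the hypothesis of the corollary, so your somewhat hesitant discussion of the Klein bottle is unnecessary and can be deleted entirely; similarly, since the paper notes just after Lemma~\ref{numberofcurve} that the formulas for $a_r,b_r$ remain valid for $g=0,1$, you need not treat those genera separately either.
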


\begin{lemma}\label{curvetype}
	Let $g\geq 1$, $P$ be a top dimensional pants decomposition of $N$ and $a\in P$. 
	Then the curve  $a$ is either essential one-sided, 
	or of type $(p,q)$ for some $p,q$ with $0\leq p\leq g, 1\leq q \leq n-1$.
\end{lemma}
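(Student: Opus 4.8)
The plan is to use the fact (Corollary~\ref{toppants}) that a top dimensional pants decomposition $P$ contains exactly $g$ essential one-sided curves, together with a local analysis of what kind of two-sided curve can appear in a pants decomposition of a non-orientable surface. First I would observe that every curve in $P$ is either one-sided or two-sided, and that by Corollary~\ref{toppants} precisely $g$ of them are essential one-sided; so it suffices to show that any two-sided curve $a\in P$ is separating, of type $(p,q)$ with the claimed bounds on $p$ and $q$ (and to check that a one-sided curve in a top dimensional $P$ is indeed essential, which follows since the complement of the $g$ one-sided curves is the orientable piece of the pants decomposition).

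The key step is the following dichotomy for a two-sided curve $a\in P$: either $a$ is nonseparating, or $a$ is separating. I would rule out the nonseparating case by a counting argument. Suppose $a$ is two-sided and nonseparating; then $N_a$ is a connected surface (orientable or not) whose pants decomposition is $P\setminus\{a\}$, together with the two boundary copies of $a$. If $N_a$ is non-orientable, its genus is $g$ or $g-1$, and one computes that $P\setminus\{a\}$, as a pants decomposition of $N_a$ (which has $n+2$ holes), would have to contain all $g$ of the original one-sided curves; but then $P\setminus\{a\}$ has at most the maximal number of curves allowed by Lemma~\ref{numberofcurve} for $N_a$, and adding $a$ back exceeds the top dimensional count $2g+n-3$ for $N$ — contradiction. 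If $N_a$ is orientable, then $a$ must be a curve whose complement absorbs all the non-orientability, which is impossible for a two-sided nonseparating curve together with a top dimensional decomposition, again by the curve-count in Lemma~\ref{numberofcurve}. Hence $a$ is separating.

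Once $a$ is known to be separating and two-sided, $N_a$ is the disjoint union of two surfaces $N'$ and $N''$; by definition $a$ has type $(p,q)$ where $N'$ is non-orientable of genus $p$ with $q+1$ holes. Here I must argue that neither side of $a$ is a disc-with-holes that could be further decomposed trivially, nor a Möbius band — but $a$ is nontrivial by hypothesis (it bounds neither a disc with $\leq 1$ hole nor a Möbius band), and since it lies in a pants decomposition each side has negative Euler characteristic. The bound $0\le p\le g$ is immediate from additivity of genus; the content is the bound $1\le q\le n-1$. The upper bound $q\le n-1$ holds because the complementary side $N''$ must contain at least one of the original $n$ holes of $N$ (otherwise $N''$ would be a closed non-orientable surface glued along a single curve, which cannot happen as a piece of a pants decomposition of a surface with boundary unless... — here one uses that a top dimensional $P$ forces the genus to be split so that each side still meets $\partial N$), so $N'$ has at most $n-1$ of the original holes, i.e. $q\le n-1$; the lower bound $q\ge 1$ is symmetric, or follows since $q=0$ would make $a$ of type $(0,0)$, i.e. $a$ bounds a disc with no holes, contradicting nontriviality.

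I expect the main obstacle to be the nonseparating two-sided case: showing cleanly that such a curve cannot occur in a \emph{top dimensional} pants decomposition. The honest way to do this is to go back to the proof of Lemma~\ref{numberofcurve} in \cite{atako14}, where it is shown that a top dimensional decomposition is obtained precisely by first cutting along $g$ one-sided curves to reach an orientable surface of genus $0$ (a sphere with $g+n$ holes) and then decomposing that sphere; in such a configuration every two-sided curve is one that survives in the sphere-with-holes, and a nonseparating-in-$N$ two-sided curve would have to be separating in that sphere — forcing it to be of type $(p,q)$ after all. So the cleanest route may be to invoke the structure of top dimensional decompositions from Corollary~\ref{toppants} and the cited proof directly, rather than re-deriving the Euler characteristic bookkeeping from scratch.
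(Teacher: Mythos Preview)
Your proposal has the right skeleton but two concrete gaps. First, and most seriously, you miss an entire case: a separating two-sided curve $a$ is \emph{not} automatically of type $(p,q)$, since the paper's definition requires both components of $N_a$ to be non-orientable (with a sphere counted as genus-$0$ non-orientable). If one component of $N_a$ is orientable of genus $k\ge 1$, then $a$ is not of type $(p,q)$ for any $p,q$, and this must be ruled out separately. The paper handles this as its Case~3, bounding $|P|\le 2g+n-3-k<2g+n-3$. Second, your genus computation in the nonseparating case is wrong: if $a$ is two-sided nonseparating with $N_a$ non-orientable, then $N_a$ has genus $g-2$, not ``$g$ or $g-1$'' (check Euler characteristic: $N_a$ has $n+2$ holes and $\chi(N_a)=\chi(N)$, forcing $g'=g-2$). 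With the correct genus the count does go through, exactly as in the paper's Case~4. Your argument for $q\ge 1$ is also off: type $(p,0)$ with $p\ge 2$ means one side is $N_{p,1}$, not a disc, so the claim ``$q=0$ would make $a$ of type $(0,0)$'' conflates $p$ with $q$.

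The paper's proof is more direct than what you sketch: it lists the four excluded topological types (two-sided nonseparating with orientable complement; one-sided with orientable complement; separating with one orientable side of positive genus; two-sided nonseparating with non-orientable complement) and for each computes an explicit upper bound on $|P|$ strictly below $2g+n-3$. Your suggested alternative route via the structural description of top dimensional decompositions from \cite{atako14} (cut along $g$ disjoint essential one-sided curves to reach a sphere with $g+n$ holes) would also work and is arguably more conceptual, but to finish you would still need to check that every two-sided curve disjoint from those $g$ one-sided curves is separating with neither side orientable of positive genus---which amounts to the same case analysis.
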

\begin{proof}
       The lemma is trivially true for $g=1$; so suppose that $g\geq 2$. Recall that a curve 
       is of type $(p,q)$ if the curve is separating such that both components of its complement 
       are non-orientable and one of these components is of genus $p$ with $q+1$ holes.
       
	It suffices to prove that the pants decomposition $P$ 
	can not contain a simple closed curve of type that is not mentioned. 
	The topological types of simple closed curves 
	that are not mentioned in the lemma are as follows:
	\begin{enumerate}
		\item [$\bullet$] A two-sided nonseparating simple closed curve whose complement is 
		orientable (in the case $g$ is even).
		\item [$\bullet$] A one-sided simple closed curve whose complement is 
		orientable (in the case $g$ is odd).
		\item  [$\bullet$] A separating simple closed curve whose complement 
		is the disjoint union of an orientable surface of genus $k$ and 
		a non-orientable surface of genus $g-2k$ for some $k \geq 1$.
		\item  [$\bullet$] A two-sided nonseparating simple closed curve whose complement is non-orientable.
	\end{enumerate}

 Case 1. Let $\be_1$ be a two-sided nonseparating simple closed curve  
 such that the surface $N_{\be_1}$ is orientable and let $P_1$ be a pants decomposition of 
 $N$ containing $\be_1$, so that $Q_1=P_1\setminus \{ \beta_1\}$ is a pants decomposition
 of $N_{\beta_1}$. In this case,
 $g$ is even, say $g=2r$, $r\geq 1$. Since $N_{\be_1}$ is an orientable surface of
 genus $r-1$ with $n+2$ holes, the number of elements of $Q_1$ is $|Q_1|=3(r-1)+(n+2)-3$,
 so that $|P_1|=3r+n-3$. This number is less than the number $2g+n-3$. Hence, $P_1$ is not top dimensional.

 Case 2. Let $\be_2$ be a one-sided simple closed curve such that $N_{\be_2}$ is orientable 
 and let $P_2$ be a pants decomposition of $N$ containing $\be_2$. In this case, $g$ is odd, 
 say $g=2r+1$, and $Q_2\setminus \{ \beta_2 \}$ is a pants decomposition of $N_{\be_2}$,
 an orientable surface of genus $r$ with $n+1$ holes. 
 Thus,  $|Q_2|= 3r+(n+1)-3$, so that $|P_2|= 3r+n-1$, which is less than $2g+n-3$.
 Hence, $P_2$ is not top dimensional.
 
 Case 3. Let $\be_3$ be a separating simple closed curve such that 
 $N_{\be_3}$ is the disjoint union of an orientable surface $S$ of genus $k\geq 1$ 
 with $l+1$ holes, and  a non-orientable surface $N'$
 of genus $g-2k$ with $n-l+1$ holes. 
 Let $P_3$ be a pants decomposition of $N$ containing $\be_3$. The
  elements of $P_3$ lying in the interior of $S$ is a pants decomposition $Q_3$ of $S$, 
  and the elements
 lying in the interior of $N'$ is a pants decomposition $Q'_3$ of $N'$.
 Then, $P_3=Q_3\cup Q'_3\cup \{ \beta_3 \}$, $|Q_3|=3k+(l+1)-3$, 
  $|Q'_3|\leq 2(g-2k)+(n-l+1)-3 $. It follows that 
  $|P_3|= |Q_1|+|Q_2|+1 \leq 2g+n-3-k$, which is less than $2g+n-3$, so that
 $P_3$ is not top dimensional. 
 
 Case 4. Let $\be_4$ be a two-sided nonseparating simple closed curve 
 such that $N_{\be_4}$ is non-orientable. Let $P_4$ be a pants decomposition of $N$ 
 containing $\be_4$. Since $N_{\be_4}$ is a non-orientable surface of genus $g-2$ with $n+2$ holes, the 
 pants decomposition $Q_4 =P_1\setminus\{\be_4\}$ contains at most 
 $2(g-2)+(n+2)-3$ elements, so that
 $|P_4|= |Q_4|+1\leq 2g+n-4$, concluding that $P_4$ cannot be top dimensional.

 This finishes the proof of the lemma.
 \end{proof}

 \smallskip

 \subsection{Our model for $\Ngn$ and curves on it} \label{sec:model} \ 

 Let $R$ be a disc in the plane whose boundary is a $(2g+n)$-gon
 such that the edges are labeled as $s_1, e_1,s_2,e_2,\ldots,s_g,e_g,e_{g+1},e_{g+2}, \ldots,e_{g+n}$  
 in this order.  We denote the edges $s_i$ by boldface semicircles
 (see Figure~\ref{model}).  For $k=1,2,\ldots,n$, let $z_k$ be the corner between the edges
 $e_{g+k-1}$ and $e_{g+k}$. We glue two copies of $R$ along $e_i$ so that each point on $e_i$ is identified with
 its copy, so that  the result is a sphere $S$ with $g$ holes.
 By identifying the antipodal points on each boundary component of $S$    
 and  by deleting a small open disc about each $z_1,z_2,\ldots,z_n$, we obtain
 a non-orientable surface $N_{g,n}$ of genus $g$ with $n$ punctures.
 The disc $R$ will be our model for $N_{g,n}$, so that we understand that $N_{g,n}$ is obtained from 
 the double of
 $R$ in the way we just described. 

\begin{figure}[hbt!]
\labellist
\small\hair 2pt
 \pinlabel $s_1$ at 107 550
 \pinlabel $s_2$ at 62 455
 \pinlabel $s_g$ at 105 360
 \pinlabel $e_1$ at 100 500
 \pinlabel $e_2$ at 88 427
 \pinlabel $e_{g-1}$ at 109 396
 \pinlabel $e_g$ at 165 370
 \pinlabel $e_{g+1}$ at 218 390
 \pinlabel $e_{g+2}$ at 246 430
  \pinlabel $e_{g+n-2}$ at 236 480
  \pinlabel $e_{g+n-1}$ at 220 510
  \pinlabel $e_{g+n}$ at 170 530
  \pinlabel $z_1$ at 220 350
 \pinlabel $z_2$ at 277 410
 \pinlabel $z_{n-1}$ at 285 495
 \pinlabel $z_n$ at 232 547

  \endlabellist
	\centering
	\includegraphics[width=0.8\textwidth]{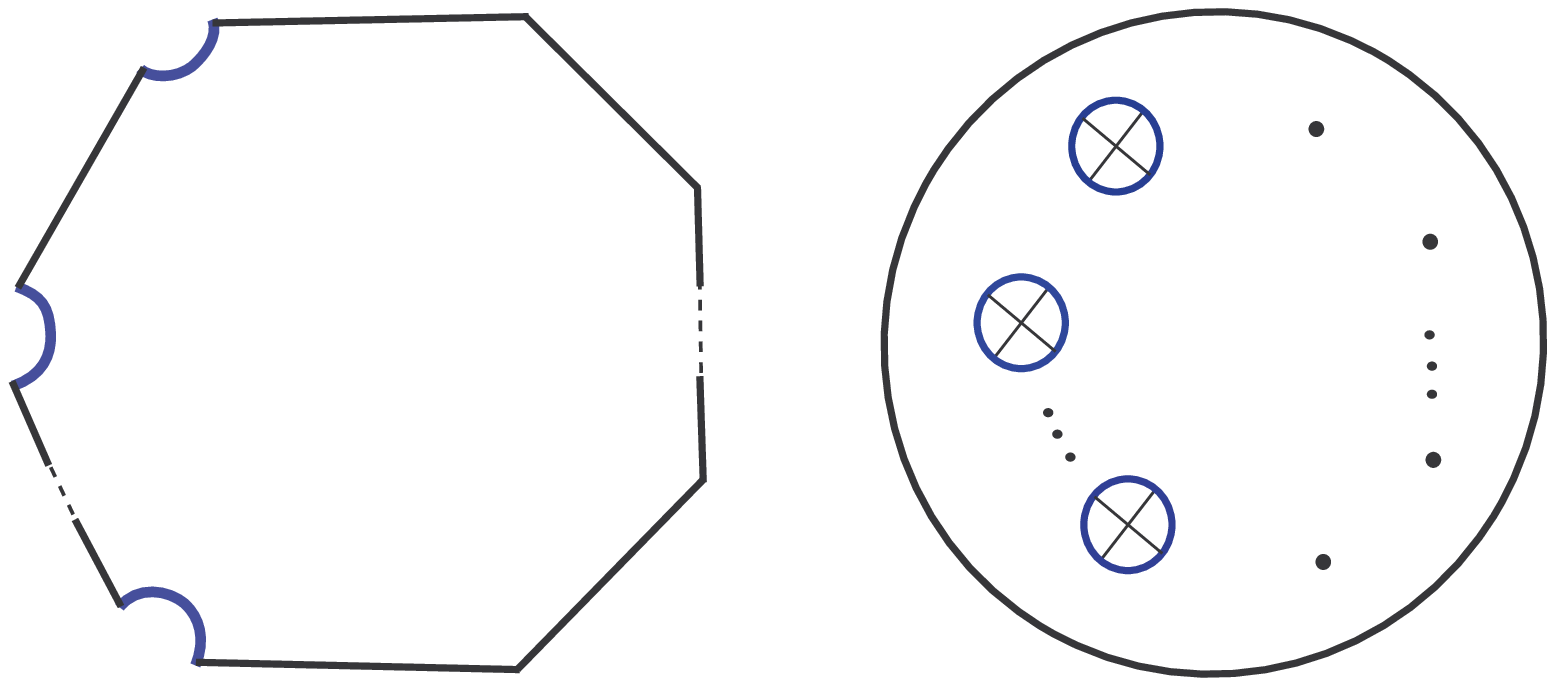}
	\caption{The disc $R$ on the left represents the surface $N_{g,n}$ on the right, a holed-sphere
	with discs having crosses, where 
	the interiors of the discs 
	with a cross are to be deleted and the antipodal points on each resulting boundary components 
	are to be identified.} \label{model}
\end{figure}

 A properly embedded arc on the model $R$ whose endpoints are on the boundary of 
 $R$ (but different from $z_j$) gives a simple closed curve on $N_{g,n}$. Curves we are
 interested in are represented by such arcs on $R$. We fix the 
 following curves on $N_{g,n}$ (cf. Figure~\ref{arcs}):
 \begin{enumerate}
   \item[$\bullet$]  For $1\leq i\leq g$, let $\alpha_i$ be the one-sided 
   										curve represented by $s_i$. 
   \item[$\bullet$]  For $1\leq i\leq g$, $1\leq j\leq g+n$ with $j \neq i, j\neq i-1$ (mod $(g+n)$), 
  		 			let $\alpha_i^j$ be the one-sided curve represented by
   					a line segment  joining the midpoint of $s_i$ and a point of $e_j$.
   \item[$\bullet$] For $1\leq i,j\leq g+n$ and $|i- j|\geq 2$, 
   			let $\beta_i^j$ be the two-sided curve represented by 
   			a line segment joining a point of $e_i$ to a point of $e_j$. 
 \end{enumerate}

 We consider isotopic copies of these curves in such a way that they are in minimal position. 
 Note that $\beta_i^j=\beta_j^i$.

\begin{figure}[hbt!]
\labellist
\small\hair 2pt
 \pinlabel $\alpha_1$ at 107 550
 \pinlabel $\alpha_i$ at 60 455
 \pinlabel $\alpha_g$ at 102 357
 \pinlabel $\alpha_1^j$ at 180 520
 \pinlabel $\alpha_i^j$ at 150 466
 \pinlabel $\alpha_g^j$ at 170 415
 \pinlabel $e_g$ at 170 346
 \pinlabel $e_j$ at 278 455
 \pinlabel $e_{g+n}$ at 170 555 
 \pinlabel $\beta_g^j$ at 430 385
 \pinlabel $\beta_{g-1}^j$ at 380 412
 \pinlabel $\beta_i^j$ at 375 450
 \pinlabel $\beta_1^j$ at 390 488
 \pinlabel $\beta_{g+n}^j$ at 430 525
    \pinlabel $\alpha_1$ at 360 550
 \pinlabel $\alpha_i$ at 313 455
 \pinlabel $\alpha_g$ at 355 357
 \pinlabel $e_g$ at 423 346
 \pinlabel $e_j$ at 531 455
 \pinlabel $e_{g+n}$ at 413 555
   \endlabellist
	\centering
	\centering
	\includegraphics[width=0.7\textwidth]{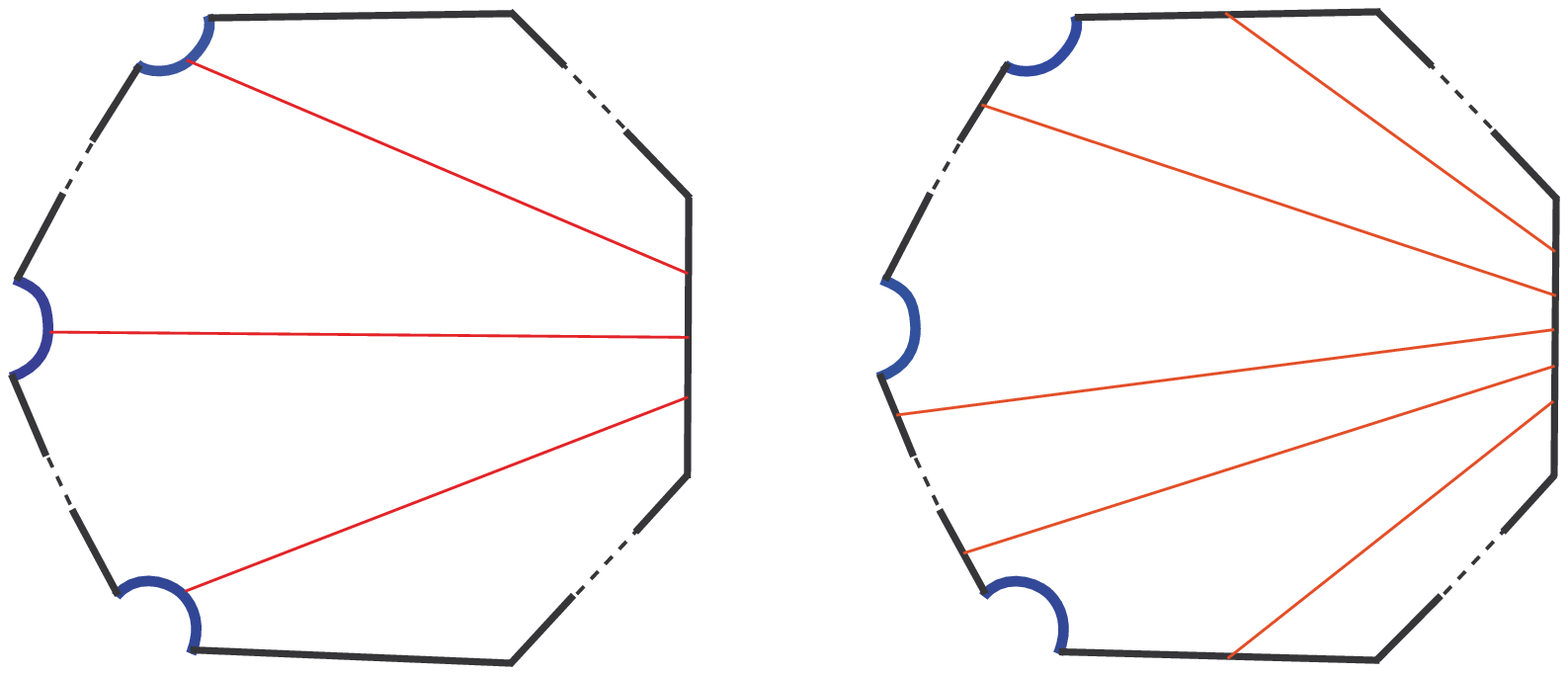}
	\caption{Arcs representing simple closed curves on $N_{g,n}$.}\label{arcs}
\end{figure}

\subsection{Locally Injective Simplicial Maps} \
 Let $K$ be an abstract simplicial complex and $\ga$ be a vertex in $K$. 
 Recall that the \textit{star} of  $\ga$ is the subcomplex of $K$ consisting of 
 all simplices of $K$ containing $\ga$ as a vertex and all faces of such simplices.
For a subcomplex $L$ of $K$,
 a simplicial map $\phi:L\rightarrow K$ is called \textit{locally injective} if its restriction 
 to the star of every vertex of $L$ is injective. 

 \begin{remark} 
  Note that if $L$ is a subcomplex of $\CC(N)$, $\sigma$ is a simplex in $L$ and if $\phi:L\to \CC(N)$
  is locally injective,  then the dimension of $\phi(\sigma)$ is equal to that of $\sigma$. In particular, if
  $\sigma$ is a top dimensional pants decomposition then so is $\phi(\sigma)$.
\end{remark}

\subsection{Adjacency Graph of a Pants Decomposition} \
      Let $P$ be a pants decomposition of $N$. Two elements $\al$ and $\be$  of $P$ are  
 	called \textit{adjacent} with respect to $P$ if there
	exists a connected component, a pair of pants, of $N_P$ (the surface 
	obtained by cutting $N$ along all elements of $P$) 
	 containing $\al$ and $\be$ as its boundary components.  The \textit{adjacency graph} $\gap$ of $P$ 
	is the graph whose vertices are the elements of $P$;  
	two vertices $\al$ and $\be$ are connected by an edge if  
	$\al$ and $\be$ are adjacent with respect to $P$ (cf. \cite{behrmar06}). 
	 We say $P$ is \textit{linear} if $\gap$ is linear.  
	If two vertices are adjacent (recpectively, nonadjacent) with respect to $P$, 
	we will also say that they are adjacent (respectively, nonadjacent) in 
	 $\gap$.
 
\begin{figure}[hbt!]
\labellist
\small\hair 2pt 
	\pinlabel $\alpha_1$ at 458 550
	\pinlabel $\beta_{1}^6$ at 458 510
	\pinlabel $\beta_{1}^5$ at 458 470
	\pinlabel $\beta_{1}^4$ at 458 430
	\pinlabel $\beta_{1}^3$ at 458 390
 	\pinlabel $\alpha_2$ at 390 352
 	\pinlabel $\alpha_3$ at 487 352
	 \pinlabel $\alpha_1$ at 175 540
 	\pinlabel $\alpha_2$ at 125 450 
  	\pinlabel $\alpha_3$ at 168 354
  	\pinlabel $\beta_{1}^6$ at 262 500
	\pinlabel $\beta_{1}^5$ at 285 475
	\pinlabel $\beta_{1}^4$ at 265 430
	\pinlabel $\beta_{1}^3$ at 228 390  
  \endlabellist
	\centering
	\centering
	\includegraphics[width=0.55\textwidth]{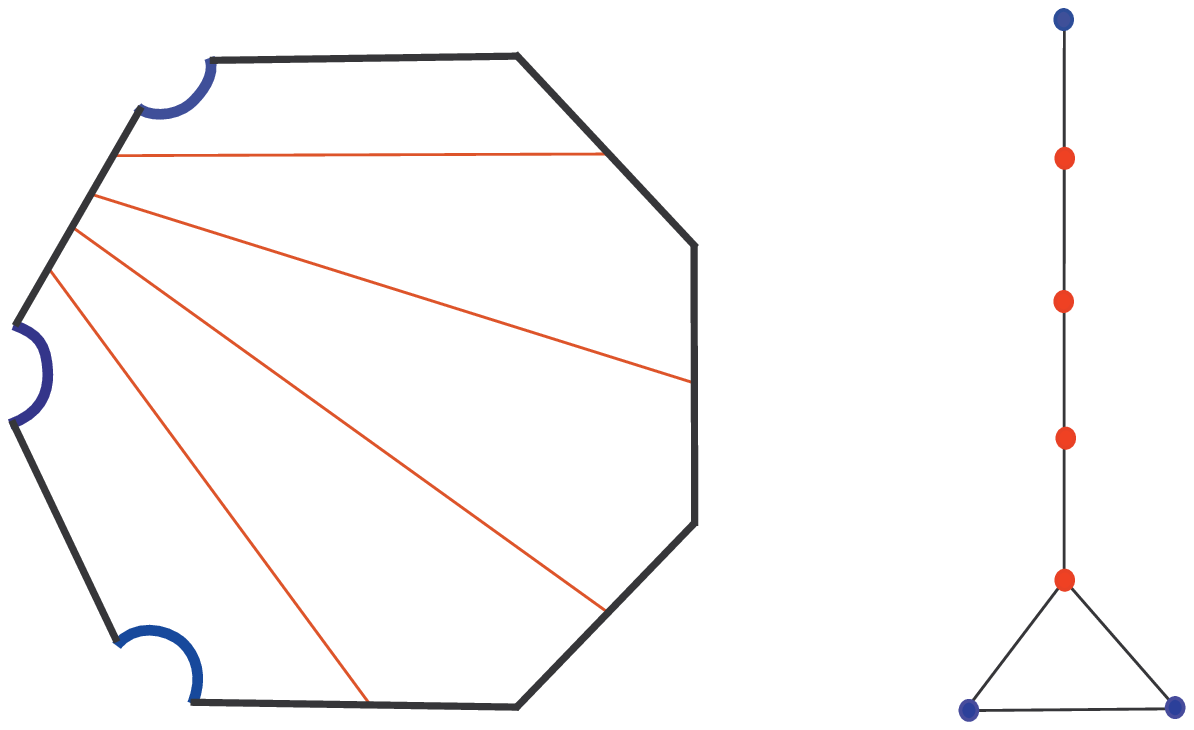}
	\caption{Simple closed curves in a pants decomposition of 
	$N_{3,4}$ and its adjacency graph.}\label{3-4adj}
\end{figure}

 Let $\al \in \gap$ be a vertex of valency one.  A vertex $\beta$ is called 
 a \textit{$k$th linear successor} of $\alpha$ if there is a path 
$\alpha=\gamma_0,\gamma_1,\ldots,\gamma_{k-1},\gamma_k=\beta$ from $\alpha$ to 
 $\beta$ such that each $\gamma_i$, $1\leq i\leq k-1$, 
 has valency two in $\gap$.
 In Figure~\ref{3-4adj}, the vertex $\be_1^6$ is a first linear successor, $\be_1^5$ is 
 a second linear successor and  $\be_1^3$ is a $4$th linear successor of $\al_1$.

\section{The Finite Subcomplex $\protect\kgn$ of The Curve Complex}\label{sec:subcomplex} 
A subcomplex $L$ of the curve complex $\CC(N)$ of the non-orientable surface $N$ is called \textit{rigid} 
if every locally injective simplicial map  $L\to \CC(N)$
is induced from a homeomorphism $N\to N$. By Theorem~\ref{mod=aut}, the curve complex itself is rigid
if $g+n\geq 5$.

Let $N_{g,n}$ be the compact connected non-orientable surface of genus $g$ 
with $n$ holes whose model is given in Section~\ref{sec:model}.
In this section, we introduce the finite subcomplex $\kai_{g,n}\subset\CC(N_{g,n})$, 
 which
will be shown to be rigid in $\CC(N_{g,n})$.

\subsection{The Finite Subcomplex $\kai_{g,n}$}\label{sec:kai}

 We use the curves introduced in 
Section~\ref{sec:model} to define the finite subcomplex $\kaign$ of $\CC(N_{g,n})$. 
We define two sets of vertices in $\CC(N_{g,n})$ as
\begin{eqnarray*}
	\kai_{g,n}^1 &=& \{ \alpha_i,\alpha_i^j\ \mid \ 1\leq i\leq g,\ 1 \leq j \leq g+n \ 
								\textrm{and}  \ j\neq i, j\neq i-1 \mbox{ mod }(g+n) \}, \\
	\kai_{g,n}^2 &=& \{\beta_i^j \ | \ 1 \leq i,j \leq g+n,\ 2 \leq |i-j| \leq  g+n-2 \}.
\end{eqnarray*}
We set $\kai_{g,n}=\kai_{g,n}^1\cup \kai_{g,n}^2$ (cf. Figure~\ref{g-nkai}). 
Although we write $\kai_{g,n}^1, \kai_{g,n}^2$ and $\kai_{g,n}$ as sets of vertices, 
we understand the full subcomplexes of $\CC(N)$ containing these sets.
Note that the vertices in $\kai_{g,n}^1$ are one-sided essential, and that 
 the vertices in $\kai_{g,n}^2$ are separating. 
 
 We remark that cutting $N_{g,n}$ along $\alpha_g$ induces an isomorphism
    \[
   \varphi : \link_{\CC(N_{g,n})}(\alpha_g) \to \CC(N_{g-1,n+1}) 
 \]
 mapping the link $\link_{\kai_{g,n}}(\alpha_g)$ of $\alpha_g$  to  
 $\kai_{g-1,n+1}$ isomorphically.
 This allows us to make induction on the genus of 
 the surface in the proof of the main theorem.

\begin{figure}[hbt!]
\labellist
\small\hair 2pt   
    \endlabellist
	\centering
		\centering
		\includegraphics[width=0.75\textwidth]{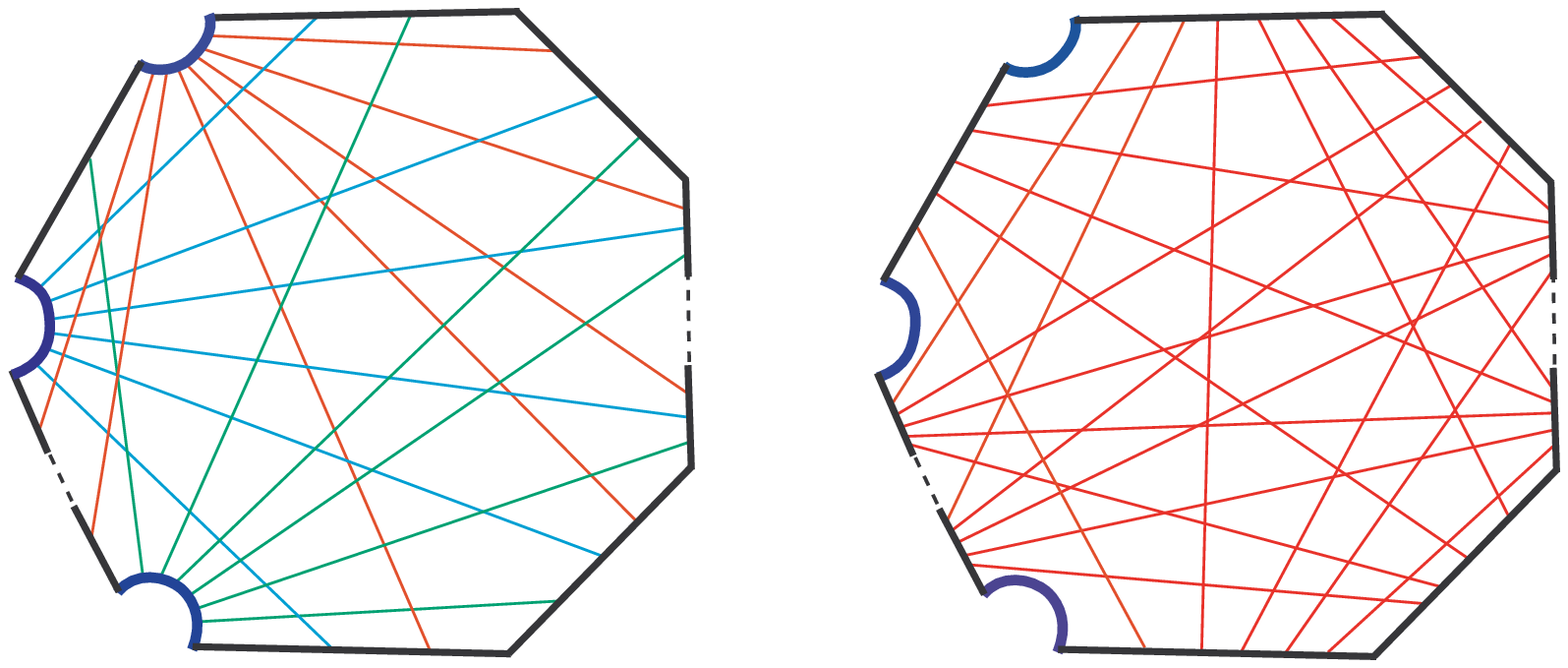}
		\caption{Vertices of $\kai_{g,n}^1$ and $\kai_{g,n}^2$.}\label{g-nkai}
\end{figure}

We note that for $n\geq 5$, the complex $\kai_{0,n}$ is the complex $\kai$ of Aramayona-Leininger. 
We may restate Theorem 3.1 in~\cite{aramlei13}
as follows.

\begin{thm} [Aramayona-Leininger] \label{thm:arle}
For $n\geq 5$, any locally injective simplicial map $\phi:\kai_{0,n}\to \CC(N_{0,n})$
is induced by a unique orientation preserving homeomorphism $N_{0,n}\to N_{0,n}$.
\end{thm}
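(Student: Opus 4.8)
The plan is to recognize this statement as a restatement of the Aramayona--Leininger theorem and then supply the small amount of extra bookkeeping needed for the \emph{orientation-preserving} and \emph{uniqueness} clauses.

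First I would make the dictionary explicit. When $g=0$ the index range in the definition of $\kai_{0,n}^1$ is empty, so $\kai_{0,n}=\kai_{0,n}^2=\{\beta_i^j\mid 1\le i,j\le n,\ 2\le|i-j|\le n-2\}$, and in the model of Section~\ref{sec:model} the surface $N_{0,n}$ is the orientable sphere with $n$ holes. Each $\beta_i^j$ is the double, across the gluing edges, of a chord of the polygon joining $e_i$ to $e_j$; on $N_{0,n}$ it is the separating curve cutting off one of the two blocks of consecutive holes determined by $e_i$ and $e_j$. Disjointness of $\beta_i^j$ and $\beta_{i'}^{j'}$, hence the entire full-subcomplex structure on these vertices, is governed solely by the nesting pattern of these blocks of holes, and comparing with~\cite{aramlei13} one checks that $\kai_{0,n}$ is exactly (after relabelling the holes) the finite rigid subcomplex used there for the $n$-holed sphere. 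Thus \cite[Theorem~3.1]{aramlei13} applies and produces a homeomorphism $F\colon N_{0,n}\to N_{0,n}$ with $\phi(\gamma)=F(\gamma)$ for every $\gamma\in\kai_{0,n}$.

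Second I would pin down how unique $F$ is, since that is what yields the orientation refinement. For $n\ge5$ the curves in $\kai_{0,n}$ fill $N_{0,n}$ --- already those $\beta_i^j$ cutting off a pair of consecutive holes do. Hence, by the Alexander method, a homeomorphism of $N_{0,n}$ that fixes each $\beta_i^j$ up to isotopy and preserves orientation is isotopic to the identity, so the pointwise stabiliser of $\kai_{0,n}$ in $\Mod^{\pm}(N_{0,n})$ equals $\langle\iota\rangle\cong\Z/2$, where $\iota$ is the orientation-reversing involution interchanging the two copies of the polygon in the doubling construction (note $\iota$ does fix every $\beta_i^j$ setwise, since these curves are doubles of arcs). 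Consequently $F$ is determined up to composition with $\iota$; exactly one of $F$ and $F\circ\iota$ is orientation preserving, and that is the asserted unique orientation-preserving homeomorphism inducing $\phi$ (uniqueness within the orientation-preserving class being automatic because $\iota$ reverses orientation).

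Granting \cite{aramlei13} as a black box, the step I expect to require the most care is this last one --- checking that $\kai_{0,n}$ fills $N_{0,n}$ for all $n\ge5$ and that $\iota$ exhausts the pointwise stabiliser, so that the ambiguity in $F$ is precisely the involution. (Reproving the Aramayona--Leininger theorem itself --- deducing from local injectivity that the intersection pattern among the $\beta_i^j$ is preserved and then rebuilding a homeomorphism of the $n$-holed sphere from the induced permutation of its holes --- would of course be the real work, but the paper is entitled to cite it.)
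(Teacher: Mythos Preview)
Your proposal is correct and, in fact, more thorough than what the paper does: the paper simply cites \cite[Theorem~3.1]{aramlei13} as a restatement without supplying any proof or further justification. Your explicit dictionary between $\kai_{0,n}$ and the Aramayona--Leininger complex, together with the argument that the pointwise stabiliser is $\langle\iota\rangle$ (so that exactly one of the two homeomorphisms is orientation preserving), fills in details the paper leaves implicit; this is harmless extra care rather than a divergence in approach.
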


\subsection{The nonadjacency condition.}
 
 A top dimensional pants decomposition $P$ of $N_{g,n}$ contained
 in a subcomplex $L$ of the curve complex \ $\CC(\Ngn)$ is said to satisfy
 the  \textit{nonadjacency condition} with respect to $L$ if 
  for every pair $\al$ and $\be$ of nonadjacent vertices  in $P$,  
	there exist two disjoint simple closed curves $\dela$ and $\delb$ in $L$ different from
	$\al$ and $\be$, respectively,
	such that 
	\[
			i(\dela,\al) \neq 0,   i(\delb, \be) \neq 0 \ \textrm{and} \ 
			i(\dela, \be) = 0, i(\delb, \al) = 0
	\] 
	and each of them is disjoint from the simple closed curves 
	in $P\setminus \{\al,\be\}$, 
	i.e. $\left( P  \setminus \{\al \} \right) \cup \{ \delb\}$,
	$\left( P  \setminus \{\be \} \right) \cup \{ \dela\}$ and
	$\left( P  \setminus \{\al,\be \} \right) \cup \{ \dela,\delb\}$ are
	pants decompositions.
	If $L=\kai_{g,n}$ we simply say that $P$ satisfies the 
	nonadjacency condition.

The following lemma is the non-orientable version of~\cite [Lemma 7] {shack07}, 
which states that for a compact orientable surface $S$ 
any locally injective simplicial map 
$\CC(S)\rightarrow\CC(S)$ preserves the nonadjacency in the adjacency graph of 
a pants decomposition of $S$. For the sake of completeness, we give a proof of the lemma. 

\begin{lemma}\label{nonadjacency}
	Let $L$ be a subcomplex of \ $\CC(\Ngn)$, $P$ be a top dimensional 
	pants decomposition of $N_{g,n}$ contained in $L$
	and let $\phi :L\rightarrow\CC(\Ngn)$ be a locally injective simplicial map.
	Suppose that $P$ satisfies the nonadjacency condition with respect to $L$. 
	If $\alpha$ and $\beta$ are nonadjacent in $P$, then $\phi(\alpha)$ 
	is nonadjacent to $\phi(\beta)$ in the pants decomposition $\phi(P)$.
\end{lemma}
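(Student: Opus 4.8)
The plan is to argue by contradiction: suppose $\alpha$ and $\beta$ are nonadjacent in $P$ but $\phi(\alpha)$ and $\phi(\beta)$ are adjacent in the pants decomposition $\phi(P)$. Since $P$ satisfies the nonadjacency condition with respect to $L$, there are curves $\dela,\delb\in L$ with $i(\dela,\alpha)\neq 0$, $i(\delb,\beta)\neq 0$, $i(\dela,\beta)=i(\delb,\alpha)=0$, each disjoint from every curve of $P\setminus\{\alpha,\beta\}$, and such that $P'=(P\setminus\{\alpha\})\cup\{\delb\}$, $P''=(P\setminus\{\beta\})\cup\{\dela\}$ and $P'''=(P\setminus\{\alpha,\beta\})\cup\{\dela,\delb\}$ are all pants decompositions. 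The key point is that $\dela$ and $\delb$ are disjoint from each other, so $\{\dela,\delb\}\cup(P\setminus\{\alpha,\beta\})$ is a simplex of $L$, and likewise $\{\dela\}\cup(P\setminus\{\beta\})$ and $\{\delb\}\cup(P\setminus\{\alpha\})$ are simplices; all of these lie in the star of $\dela$ or of $\delb$, so local injectivity of $\phi$ applies.

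The main engine is the following observation about pants decompositions. Because $\phi$ is simplicial and locally injective, it sends a top dimensional pants decomposition to a top dimensional pants decomposition (as noted in the Remark after the definition of locally injective maps), and it preserves disjointness; moreover, on the star of any fixed vertex it is injective, so it does not collapse distinct curves that share a vertex's star. First I would record that $\phi(P)$, $\phi(P'')=(\phi(P)\setminus\{\phi(\beta)\})\cup\{\phi(\dela)\}$ and $\phi(P')=(\phi(P)\setminus\{\phi(\alpha)\})\cup\{\phi(\delb)\}$ are pants decompositions, with $\phi(\dela)\neq\phi(\alpha)$ and $\phi(\delb)\neq\phi(\beta)$ (by local injectivity on the relevant stars), and that $\phi(\dela)$ is disjoint from $\phi(\beta)$ while $\phi(\delb)$ is disjoint from $\phi(\alpha)$. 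Now pass to the pair of pants $Y$ in the cut surface $N_{\phi(P)}$ that witnesses the adjacency of $\phi(\alpha)$ and $\phi(\beta)$: the two of them appear as (two of the three) boundary curves of $Y$. Replacing $\phi(\beta)$ by $\phi(\dela)$ in the pants decomposition $\phi(P)$ must still yield a pants decomposition, and $\phi(\dela)$ is disjoint from everything in $\phi(P)\setminus\{\phi(\beta)\}$ — in particular from $\phi(\alpha)$ and from the third boundary curve of $Y$. The only isotopy classes of essential simple closed curves in $Y$ disjoint from both $\phi(\alpha)$ and the third boundary curve are $\phi(\beta)$ itself and curves parallel to a boundary component; since $\phi(\dela)$ must be a nontrivial curve distinct from $\phi(\beta)$ and cannot be isotopic to $\phi(\alpha)$ or to any curve of $P\setminus\{\alpha,\beta\}$ (again by local injectivity), we conclude $\phi(\dela)$ lies outside $Y$, i.e.\ $\phi(\dela)$ is disjoint from $Y$ up to isotopy. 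The same argument applied to $\phi(\delb)$ forces $\phi(\delb)$ to be disjoint from $Y$ up to isotopy.

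Having localized both $\phi(\dela)$ and $\phi(\delb)$ away from $Y$, I then use $P'''$. Since $\dela$ and $\delb$ are disjoint and $(P\setminus\{\alpha,\beta\})\cup\{\dela,\delb\}$ is a pants decomposition, applying $\phi$ gives that $(\phi(P)\setminus\{\phi(\alpha),\phi(\beta)\})\cup\{\phi(\dela),\phi(\delb)\}$ is a pants decomposition of $N$. But in $N_{\phi(P)\setminus\{\phi(\alpha),\phi(\beta)\}}$ the component containing $Y$ is the four-holed sphere or one-holed Klein bottle (or one-holed torus) obtained by gluing $Y$ to its neighbor along the third boundary curve of $Y$; removing $\phi(\alpha)$ and $\phi(\beta)$ from $\phi(P)$ merges exactly the pairs of pants adjacent to them. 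The two replacement curves $\phi(\dela),\phi(\delb)$, being disjoint from $Y$, are confined to this merged piece but on the far side of $Y$'s third boundary curve, so they cannot cut the merged piece into pairs of pants — there is a subsurface containing $Y$ that remains uncut and is not a pair of pants, since $Y$ carries two of the removed curves $\phi(\alpha),\phi(\beta)$ but receives no replacement curve in its interior. This contradicts $(\phi(P)\setminus\{\phi(\alpha),\phi(\beta)\})\cup\{\phi(\dela),\phi(\delb)\}$ being a pants decomposition, completing the proof.

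The step I expect to be the main obstacle is the careful bookkeeping in the last paragraph: pinning down exactly which component of the cut surface contains $Y$, accounting for the possibility that $\phi(\alpha)$ or $\phi(\beta)$ is one-sided (in the non-orientable setting a "pair of pants'' neighbor may instead be a Möbius-band-containing piece), and verifying that no choice of $\phi(\dela),\phi(\delb)$ disjoint from $Y$ can repair the pants decomposition. A clean way to handle this uniformly is an Euler characteristic / curve-counting argument: the subsurface filled by $Y$ together with its neighbors across $\phi(\alpha)$ and $\phi(\beta)$ has a pants decomposition using the curves of $\phi(P)$ lying in it, and removing $\phi(\alpha),\phi(\beta)$ drops the count of interior curves by two while $\phi(\dela),\phi(\delb)$ can add back at most the curves that actually lie in that subsurface's interior, which — because they avoid $Y$ — is at most one fewer than needed, contradicting that the result is again a pants decomposition of that subsurface.
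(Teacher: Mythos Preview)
There is a genuine gap. You correctly record that $i(\dela,\alpha)\neq 0$ and $i(\dela,\beta)=0$, but then assert that ``$\phi(\dela)$ is disjoint from everything in $\phi(P)\setminus\{\phi(\beta)\}$ --- in particular from $\phi(\alpha)$.'' This is backwards: $\dela$ is disjoint from $P\setminus\{\alpha\}$, not from $P\setminus\{\beta\}$, so what you can conclude is that $\phi(\dela)$ is disjoint from $\phi(P)\setminus\{\phi(\alpha)\}$. In fact $\phi(\dela)$ \emph{must} intersect $\phi(\alpha)$: it is distinct from every element of $\phi(P)$ (by local injectivity, as you note), and $\phi(P)$ is a maximal simplex, so $\phi(\dela)$ intersects some curve of $\phi(P)$; the only candidate is $\phi(\alpha)$. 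The same reasoning forces $\phi(\delb)$ to intersect $\phi(\beta)$. Consequently your conclusion that $\phi(\dela)$ and $\phi(\delb)$ ``lie outside $Y$'' is false under either labeling convention: each of them crosses a boundary curve of $Y$ and therefore meets $Y$ in arcs. Your $P'''$ counting argument, and the Euler-characteristic variant you sketch at the end, both rest on this false premise and collapse with it.

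The paper's argument exploits precisely the intersections you discarded. Once one knows $i(\phi(\dela),\phi(\alpha))\neq 0$ and $i(\phi(\delb),\phi(\beta))\neq 0$, and that $\phi(\dela),\phi(\delb)$ are each disjoint from the other two boundary components of the pair of pants $Y$, every arc of $\phi(\dela)\cap Y$ has both endpoints on $\phi(\alpha)$ and every arc of $\phi(\delb)\cap Y$ has both endpoints on $\phi(\beta)$. In a pair of pants, an essential arc with both endpoints on one boundary component necessarily crosses any essential arc with both endpoints on a different boundary component; hence $\phi(\dela)$ and $\phi(\delb)$ intersect, contradicting that $\dela,\delb$ are disjoint. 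That arc-intersection fact in a pair of pants is the missing idea in your attempt.
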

\begin{proof}
	Let $\al$ and $\be$ be two nonadjacent simple closed curves in $P\subset L$.
	By assumption, there exist two disjoint simple closed curves 
	$\dela$ and $\delb$ on $N_{g,n}$, different from $\al$ and $\be$ respectively,
	such that $i(\dela , \al) \neq 0,   \ i(\delb, \be)\neq 0$,
	$\dela \cap \be = \emptyset, \delb \cap \al = \emptyset$ and 
	each of $\dela$ and $\delb$ is disjoint from the elements of $P\setminus \{\al,\be\}$.
	Hence, $\phi(\dela)$ and $\phi(\delb)$ are disjoint from each other and from all curves in the set
	$\phi(P)\setminus\{\phi(\al),\phi(\be)\}$. Also $\phi(\dela)$ is disjoint from 
	$\phi(\be)$ and $\phi(\delb)$ is disjoint from 
	$\phi(\al)$. It is then easy to see that $\phi(\delta_\alpha)$ 
	 must intersect $\phi(\alpha)$
	and $\phi(\delta_\beta)$  must intersect $\phi(\beta)$.
	
	Assume $\phi(\al)$ and $\phi(\be)$ are adjacent in $\phi(P)$. Then 
	there is a subsurface $X$ of $N_{g,n}$ homeomorphic to a pair of pants 
	bounded by $\phi(\al),\phi(\be)$ and a third curve (possibly a boundary component of $N$), and
	$\phi(\dela)$ and $\phi(\delb)$ do not intersect the third curve. Since each component of 
	$\phi(\dela)\cap X$ (respectively, of $\phi(\delb)\cap X$) is an arc connecting two points of $\phi(\al)$ 
	(respectively $\phi(\be)$) and since any two such arc on a pair of pants must intersect 
	(cf.~\cite{flp}), we conclude that $\phi(\dela)$ and $\phi(\delb)$ intersect each other. By this contradiction, 
	$\phi(\al)$ and $\phi(\be)$ are nonadjacent in $\phi(P)$.
 \end{proof}

\section{Preserving Topological Types of Vertices in $\kaign$}\label{sec:types}

Every self-homeomorphism of a surface preserves topological types of simple closed curves. 
Thus,  a map from a subcomplex to the curve complex must preserve topological types of vertices
of the subcomplex, in order to have a hope that it is induced by a homeomorphism.

The purpose of this section is to prove that the topological types 
of many vertices, enough to prove the main theorem, of $\kaign$  are preserved under locally 
injective simplicial maps $\kaign \rightarrow \cn$. To this end, let
$\phi:\kaign\rightarrow\cn$ be a locally injective simplicial map.
In this section we assume that $g+n\geq 5$.

\subsection {The case $g=1$}  \
Let $N=N_{1,n}$ denote the non-orientable surface of genus $1$ with $n\geq 4$ holes.
We show that $\phi$ preserves the topological types of vertices in $\kai_{1,n}$.
Note that in this case $\kai_{1,n}$ is the subcomplex of $\CC(N)$ with the vertex set
 \[
 \{ \alpha_1, \alpha_1^2, \alpha_1^3,\ldots, \alpha_1^n     \} \cup
 \{\beta_i^j \ | \ 1 \leq i,j \leq n+1,\ 2 \leq |i-j| \leq  n-1 \}.
 \]
By Corollary~\ref{toppants}, a pants decomposition of $N$
contains exactly $n-1$  curves. Note that every pants decomposition contains exactly 
$n-1$ curves.

\subsubsection {Linear Pants Decompositions}

Recall that a pants decomposition is called \textit{linear} 
if its adjacency graph is linear.

\begin{lemma}\label{cccc}
        For every curve $\beta_i^j\in \kai_{1,n}$ there is a linear pants decomposition  
        $P_i^j $ in  $\kai_{1,n}$ containing both $\alpha_1$ and $\beta_i^j$.
\end{lemma}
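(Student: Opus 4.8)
The plan is to exhibit, for each $\beta_i^j \in \kai_{1,n}$, an explicit linear pants decomposition of $N_{1,n}$ lying inside $\kai_{1,n}$ and containing both $\alpha_1$ and $\beta_i^j$. First I would recall from Corollary~\ref{toppants} that a pants decomposition of $N=N_{1,n}$ has $n-1$ curves, exactly one of which is essential one-sided; the natural candidate for the one-sided curve is $\alpha_1$, so every pants decomposition we build will have $\alpha_1$ together with $n-2$ two-sided curves of the form $\beta_k^\ell$. Cutting along $\alpha_1$ produces an orientable sphere with $n+1$ holes (in the model of Section~\ref{sec:model}, $\alpha_1$ is represented by $s_1$, and $N_{\alpha_1} \cong N_{0,n+1}$), and a linear pants decomposition of $N_{1,n}$ containing $\alpha_1$ corresponds to $\alpha_1$ followed by a ``linear'' chain of $\beta$-curves that successively chop off one hole at a time.

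The key step is the combinatorial bookkeeping. Fix $\beta_i^j$ with $2 \le |i-j| \le n-1$; say $i<j$. The arc representing $\beta_i^j$ joins a point of $e_i$ to a point of $e_j$ and, on the model $R$, separates the boundary edges of $R$ into two groups. I would order the $n+1$ boundary edges $e_1,\dots,e_{n+1}$ cyclically and build the chain of curves $\beta_{i}^{i+1}, \beta_{i}^{i+2}, \ldots$ moving the second index from $i$ up to $j$ — each such curve lies in $\kai_{1,n}^2$ as long as the index difference stays in the allowed range $[2,n-1]$, which is exactly the condition defining $\kai_{1,n}^2$ — and similarly a chain on the complementary side, so that together with $\alpha_1$ the resulting family gives $n-1$ pairwise disjoint curves, cuts $N$ into pairs of pants, and has linear adjacency graph (each curve is adjacent only to its immediate predecessor and successor in the chain, with $\alpha_1$ at one end). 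One has to be slightly careful at the two ends of the chain and at the curves $e_j$ that are actual boundary components of $N$ versus arcs across the interior; a picture like Figure~\ref{3-4adj} (which illustrates exactly such a linear pants decomposition of $N_{3,4}$) makes the general pattern transparent, and I would phrase the general construction by reducing to that picture via a relabelling of the holes.

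The main obstacle I anticipate is verifying that \emph{every} $\beta_i^j$ — including those where $|i-j|$ is as small as $2$ or as large as $n-1$, and including the cases where the arc has both endpoints ``near'' the one-sided part — can be slotted into such a chain without being forced to use a curve whose index difference falls outside $[2, n-1]$, i.e.\ a curve not in $\kai_{1,n}$. Handling the extreme cases $|i-j|=2$ and $|i-j|=n-1$ separately, and possibly choosing the direction in which the chain is built (clockwise vs.\ counterclockwise around the polygon $R$) depending on where $\beta_i^j$ sits, should resolve this; the cyclic symmetry of the indices $1,\dots,n+1$ in the definition of $\kai_{1,n}^2$ is what makes it work. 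Once the family $P_i^j$ is written down explicitly, checking disjointness (the arcs are nested or side-by-side in $R$), checking that the complementary pieces are pairs of pants (Euler characteristic count: $n-1$ curves cutting $N_{1,n}$, $\chi(N_{1,n}) = 2 - 1 - n = 1-n$, into $n-1$ pieces each of $\chi = -1$), and checking linearity of $\mathcal{A}(P_i^j)$ are all routine.
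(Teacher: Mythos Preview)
Your approach is the same as the paper's: build an explicit linear chain of $\beta$-curves together with $\alpha_1$. The paper skips the discussion and just writes down the answer in two cases (assuming $i\le j-2$): if $i=1$ take $P_1^j=\{\alpha_1,\beta_1^n,\beta_1^{n-1},\ldots,\beta_1^3\}$, and if $i\ge 2$ take the three-segment chain $\{\alpha_1,\beta_2^{n+1},\ldots,\beta_2^{j},\beta_3^{j},\ldots,\beta_i^{j},\beta_i^{j-1},\ldots,\beta_i^{i+2}\}$. Two small corrections to your sketch: your chain starts with $\beta_i^{i+1}$, which has index difference $1$ and so is not in $\kai_{1,n}^2$ --- the smallest legal curve is $\beta_i^{i+2}$; and the relevant case split is not the extremal values of $|i-j|$ but whether $i=1$ (the chain stays on the single ray $\beta_1^{\bullet}$ with $\alpha_1$ at one end) or $i\ge 2$ (the chain must bend, and $\alpha_1$ is attached via $\beta_2^{n+1}$). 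Once you write these down, disjointness and linearity are immediate from the nested-arc picture, as you say.
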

\begin{proof}
Clearly we can assume that $i\leq j-2$. If $i=1$ 
\[
P_1^j = \{  \alpha_1,\beta_1^ n,   \beta_1^{n-1},\ldots,\beta_1^{j+1},\beta_1^j,  \beta_1^{j-1},\ldots,  \beta_1^3   \},
\] 
and otherwise
\[
  P_i^j = \{  \alpha_1,\beta_2^ {n+1},   \beta_2^{n},\ldots,  \beta_2^{j+1}, \beta_2^j,\beta_3^j,  \beta_4^j, 
  \ldots,  \beta_{i-1}^j, \beta_i^j,    \beta_i^{j-1},  \beta_i^{j-2}, \ldots,  \beta_i^{i+2}   \}
\]
is a desired pants decomposition.
\end{proof}

\begin{lemma}\label{valency2}
	Let $P$ be a linear pants decomposition of $N$, 
	$\gap$ be its adjacency graph and  
	$\ga \in \gap$ be a vertex. 
	The vertex $\ga$ is of type $(0,k)$ for some $k\geq3$ if and only if 
	$\ga$ has valency two in $\gap$.
\end{lemma}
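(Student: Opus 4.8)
The plan is to analyze the adjacency graph $\mathcal{A}(P)$ of a linear pants decomposition $P$ of $N = N_{1,n}$ by understanding, pants-by-pants, how the curves in $P$ partition the surface. Since $N$ has genus $1$, cutting along the unique one-sided essential curve (there is exactly one, by Corollary~\ref{toppants}, applied with $g=1$) yields an orientable surface. So any $P$ consists of the one-sided curve together with $n-2$ two-sided curves; each two-sided curve $\gamma$ is separating in $N$ (any two-sided nonseparating curve would put us outside the top-dimensional count, cf.\ Lemma~\ref{curvetype} for $g=1$ — in genus $1$ a two-sided curve is either separating or its complement is orientable of lower genus, and one checks both are impossible in a top pants decomposition). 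Hence each two-sided $\gamma \in P$ is of type $(0,k)$ or $(1,k)$, i.e.\ it either bounds a planar piece with $k$ holes on one side, or it separates off the one-sided curve together with some holes.

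**Main computation.** The key step is the following local picture. In $\mathcal{A}(P)$, a vertex $\gamma$ has valency equal to the number of distinct pants of $N_P$ on whose boundary $\gamma$ appears, counted with multiplicity if $\gamma$ appears twice on the same pants. First I would observe that the one-sided curve $\alpha$ has valency one: a regular neighborhood of $\alpha$ is a Möbius band, so $\alpha$ bounds (in $N_P$) a single pair of pants on its one "side" and nothing else — valency one. Similarly a curve of type $(1,k)$ cuts off a "Möbius-band-plus-holes" side; the pants adjacent to it on that side are part of a lower-genus-$1$ configuration, but the crucial point is that such a curve, like $\alpha$, sits at the boundary of the linear graph — one should show a type $(1,k)$ curve has valency one as well (its "orientable side" contributes one adjacent pants, and on the non-orientable side it is, after cutting, the boundary curve of an $N_{1,k}$ piece, adjacent to exactly one pants there — or, more cleanly, it has valency one because otherwise linearity forces contradictions with the unique one-sided curve being an endpoint). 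Finally, a curve of type $(0,k)$ with $k \geq 3$: it bounds a planar surface with $k$ holes on one side and a genus-$1$ surface with the remaining holes on the other; it is adjacent to one pants on each side, giving valency exactly two. Conversely, a valency-two vertex cannot be one-sided (valency one) and cannot be type $(1,k)$ (valency one by the above), so it is type $(0,k)$; and $k \geq 3$ follows because $k \in \{1,2\}$ would make $\gamma$ trivial (bounding a disc with $\leq 1$ hole) or isotopic into a pants in a way that is excluded, actually $k=2$ gives a curve bounding a pants-with-the-rest, still type $(0,2)$ which is genuine — so I need to double check: the claim says $k \geq 3$, so presumably in this linear model a type $(0,2)$ curve would be an endpoint. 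I would resolve this by noting a type $(0,2)$ curve in a \emph{linear} $P$ bounds a single pants on its planar side (two holes $=$ two boundary components of that one pants), hence valency one on that side, and the genus side contributes the other, but careful bookkeeping shows it lands at the end — the honest thing is that in a linear pants decomposition the planar side of a type $(0,k)$ curve is a chain of $k-2$ pants, contributing one adjacency, and $k=2$ degenerates.

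**Anticipated obstacle.** The hard part will be the converse direction together with pinning down the role of type $(1,k)$ curves and the $k\geq 3$ versus $k\geq 2$ boundary case: I must argue that \emph{in a linear graph} every valency-two vertex is forced to be type $(0,k)$, which requires ruling out that a type $(1,k)$ curve ever has valency two. This is where linearity is essential — in a non-linear pants decomposition a type $(1,k)$ curve certainly can have valency two. I expect the cleanest route is: (i) show the one-sided curve is always an endpoint of the linear graph (valency one), (ii) show that in a linear $P$ there is a "side" decomposition where everything on one side of any type $(1,k)$ curve, including that curve, forms a sub-path ending at the one-sided curve, forcing the type $(1,k)$ curve to also be an endpoint or, if interior, to have valency two only in a configuration that contradicts the uniqueness of the one-sided curve — more precisely, a type $(1,k)$ curve $\gamma$ with $1 < k < n$ has on one side a planar piece (a path of pants contributing one adjacency) and on the other side an $N_{1,n-k+1}$-piece whose own pants decomposition contains the one-sided curve, and linearity of the whole graph forces $\gamma$ to be adjacent there to exactly one pants as well, giving valency two — so type $(1,k)$ curves with $1<k<n$ genuinely could have valency two, which would break the lemma! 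Hence I suspect the correct reading is that such curves cannot occur in a linear $P$ in $\kai_{1,n}$, or the statement implicitly restricts attention to the $\beta$-curves of $\kai_{1,n}$, all of which are type $(0,k)$. I would therefore verify at the outset that every two-sided curve appearing in a linear pants decomposition drawn from $\kai_{1,n}$ is of type $(0,k)$ (the curves $\beta_i^j$ are all separating of planar type by inspection of the model in Section~\ref{sec:model}), which removes type $(1,k)$ from consideration entirely and makes the valency count $\Leftrightarrow$ type-$(0,k)$-with-$k\geq 3$ a clean combinatorial fact about chains of pairs of pants, the endpoint curves (valency one) being $\alpha_1$ and the type $(0,2)$ curve at the far end.
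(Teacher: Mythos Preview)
Your proposal has two genuine errors that derail the argument.

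First, your working definition of valency is wrong. You write that the valency of $\gamma$ equals ``the number of distinct pants of $N_P$ on whose boundary $\gamma$ appears.'' That is not what adjacency means here: two curves are adjacent when they lie on the boundary of a common pair of pants, so the valency of $\gamma$ is the number of \emph{other curves of $P$} sharing a pants with $\gamma$, not the number of pants. A one-sided curve $\alpha$ sits on exactly one pair of pants $Y$, but $Y$ has two further boundary components; if both belong to $P$ then $\alpha$ has valency two, not one. Likewise a type $(0,2)$ curve sits on two pairs of pants, yet one of them is bounded by $\gamma$ and two holes of $N$ and contributes nothing to the valency. Your formula gives the wrong answer in both of these cases, and these are precisely the cases you need to handle for the converse.

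Second, your worry about ``type $(1,k)$'' curves is a phantom. In genus one a separating curve cuts $N_{1,n}$ into a planar piece and a genus-one piece; by the paper's convention a curve of type $(p,q)$ is also of type $(g-p,n-q)$, so type $(1,k)$ and type $(0,n-k)$ name the same curves. There is no extra class to rule out, and your proposed escape hatch of restricting to $\kai_{1,n}$ is both unnecessary and illegitimate, since the lemma is stated for an arbitrary linear pants decomposition.

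What you are missing is the role of linearity in the converse. The paper's argument is short: suppose $\gamma$ has valency two but is one-sided. Then $\gamma$ lies on a single pair of pants whose other two boundary components must be two distinct curves $\beta_1,\beta_2\in P$ (that is exactly what valency two forces), and now $\gamma,\beta_1,\beta_2$ form a triangle in $\mathcal A(P)$, contradicting linearity. The identical reasoning disposes of type $(0,2)$, since the planar side contributes no adjacencies and both neighbours of $\gamma$ must come from the single pants on the genus-one side. Once one-sided and type $(0,2)$ are excluded, Lemma~\ref{curvetype} leaves only type $(0,k)$ with $k\ge 3$.
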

\begin{proof}
	Suppose first that $\ga$ is of type $(0,k)$, for some $k\geq3$. 
	The surface $N_\gamma$ has two connected components, 
	none of which is a pair of pants. 
	Thus, each connected component contains at least an element of $P$
	adjacent to $\gamma$, 
	so that $\gamma$ is adjacent to at least two, 
	hence exactly two, vertices in $\gap$ since $P$ is linear.
	
	Suppose now that $\ga$ has valency two in $\gap$.  Assume $\ga$ is one-sided. 
	If $N$ is cut along the elements of $P$, one gets $n-1$ pairs of pants, and 
	there is a unique pair of pants $Y$ such that 
	its boundary components come from $\gamma$ 
	and two other different simple closed curves, say $\be_1$ and $\be_2$,  in $P$. 
	But in this case the vertices $\gamma, \be_1,\be_2$ form a triangle in the adjacency graph, 
	contradicting the linearity of $\gap$. By a similar argument, 
	$\ga$ can not be of type $(0,2)$ either. 	
 \end{proof}

\begin{remark}\label{valency1}
	Note that a vertex in the adjacency graph of a linear pants decomposition  
	has exactly two vertices of valency one. These vertices are either one-sided or of type $(0,2)$. 
	In fact, one of them is one-sided and the other is of type $(0,2)$ since the genus of the surface is one.
\end{remark}

\begin{lemma}\label{lingraph}
	Let $P=\{\ga_2,\ga_3,\ldots ,\ga_n\}$ be a linear pants decomposition of $N$, 
	$\gap$ be the adjacency graph of $P$  such that 
	$\ga_i$ is connected to $\ga_{i+1}$ for $i=2,\ldots,n-1$ in $\gap$. 
	Then, up to changing the index $i$ with $n+2-i$ if necessary, 
	$\ga_n$ is one-sided and $\ga_i$ is of type $(0,i)$ for $2\leq i \leq n-1$.
\end{lemma}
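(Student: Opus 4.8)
The strategy is to bootstrap from the local structure lemmas already proved: Lemma~\ref{valency2} pins down which vertices of a linear pants decomposition have valency two (precisely the type $(0,k)$ curves with $k\ge 3$), and Remark~\ref{valency1} tells us the two endpoints of the linear graph $\gap$ are one endpoint one-sided and the other of type $(0,2)$. Since $\gap$ is a path $\ga_2-\ga_3-\cdots-\ga_n$, one of $\ga_2,\ga_n$ is the one-sided curve and the other is of type $(0,2)$; after relabelling $i\mapsto n+2-i$ if needed, I may assume $\ga_n$ is the one-sided curve and $\ga_2$ is of type $(0,2)$. It then remains to identify the type of each interior vertex $\ga_i$ for $3\le i\le n-1$; by Lemma~\ref{valency2} each such $\ga_i$ is of type $(0,k_i)$ for some $k_i\ge 3$, and the content of the lemma is that $k_i=i$.

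\textbf{Main step: identifying the parameter $k_i$.} The plan is an induction along the path starting from the type-$(0,2)$ end $\ga_2$. Cutting $N$ along $\ga_2$ splits off a three-holed sphere $Y_2$ (the disc-with-two-holes bounded by $\ga_2$) together with its complement $N'_2$, a copy of $N_{1,n-1}$ whose pants decomposition is $P\setminus\{\ga_2\}$. Now I consider the curve $\ga_i$ for increasing $i$: the key observation is that for a linear pants decomposition the ``prefix'' curves $\ga_2,\ldots,\ga_i$ cobound a subsurface $R_i$ of $N$ obtained as the union of the pairs of pants lying on one side, and $\ga_i$ is exactly the separating curve $\partial R_i\setminus\partial N$ that does not meet $\ga_{i+1},\ldots,\ga_n$. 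Counting boundary components: $R_2$ is a three-holed sphere contributing two holes of $N$ outside $\ga_2$, and passing from $R_{i}$ to $R_{i+1}$ glues on one more pair of pants along $\ga_i$, increasing the number of $N$-boundary components captured by exactly one (since each pair of pants in a linear chain meets the chain in exactly two of its three cuffs, the third cuff being a boundary component of $N$, except at the very ends). Hence $R_i$ is planar with exactly $i$ boundary components among the holes of $N$, i.e. $\ga_i$ bounds a disc with $i$ holes, so $\ga_i$ is of type $(0,i)$. The genus-one handle, carried by the one-sided curve $\ga_n$, sits in the last pair of pants, which is why the chain must terminate at a one-sided vertex rather than continuing.

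\textbf{Bookkeeping and the mod-$(n+2)$ symmetry.} I would make the counting rigorous via an Euler-characteristic / boundary-component argument: the surface $N_{1,n}$ cut along the $n-1$ curves of $P$ yields $n-1$ pairs of pants with a total of $3(n-1)$ cuffs; of these, $n$ cuffs are genuine boundary components of $N$ (with one of them being the ``doubled'' antipodal boundary that makes $\ga_n$ one-sided, contributing $1$ rather than $2$ to a cuff-count argument — here I must be slightly careful, and it is cleanest to phrase everything in terms of the separating curves directly rather than cuffs), and $2(n-2)$ cuffs are glued in pairs along the $n-2$ interior curves $\ga_3,\ldots,\ga_{n-1}$ together with $\ga_2$. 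Tracking which side of $\ga_i$ contains $\ga_n$ fixes the orientation of the induction and forces the monotone labelling $\ga_i$ of type $(0,i)$; reversing the path is exactly the substitution $i\mapsto n+2-i$, which accounts for the ``up to changing the index'' clause (note $\ga_2\leftrightarrow\ga_n$ under $i\mapsto n+2-i$, consistent with one end being type $(0,2)$ and reachable as ``type $(0,n)$ from the other side'', while the one-sided curve has no type $(0,k)$ and must be the genuine endpoint).

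\textbf{Anticipated obstacle.} The main subtlety is the precise boundary-component bookkeeping near the one-sided end: because $\ga_n$ is one-sided, the ``pair of pants'' containing it is attached to the chain in a slightly degenerate way (a regular neighbourhood of $\ga_n$ is a M\"obius band, and the relevant pair of pants $Y$ has $\ga_n$ as one cuff appearing with multiplicity one), so the naive cuff-counting that gives $k_i=i$ for the planar part needs a separate check to confirm that the chain has length exactly $n-1$ and that the count ``closes up'' correctly at $\ga_{n-1}$ of type $(0,n-1)$. I expect to handle this by cutting along $\ga_{n-1}$ first: its complement is a three-holed sphere on one side (containing $Y$ and $\ga_n$, hence a M\"obius-band-with-two-holes, i.e. $N_{1,2}$, whose only interior nontrivial curve is $\ga_n$) — wait, this must instead be phrased as: $\ga_{n-1}$ separates off a copy of $N_{1,2}$ on the $\ga_n$-side, which forces $\ga_{n-1}$ to bound a disc with $n-1$ holes on the other side, matching type $(0,n-1)$, and then induct downward. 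This downward induction from the one-sided end, combined with the upward induction from the $(0,2)$ end, meets in the middle and is the cleanest way to avoid any gap in the counting.
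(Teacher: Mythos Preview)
Your approach is correct and essentially the same as the paper's: both start from Remark~\ref{valency1} to place the one-sided curve at $\ga_n$ and the type-$(0,2)$ curve at $\ga_2$, then run an induction along the path from the $\ga_2$ end, using linearity to force the third cuff of each successive pair of pants to be a boundary component of $N$ (the paper phrases this as ``the third hole of $X$ would otherwise create a triangle in $\gap$''), which bumps the type from $(0,k-1)$ to $(0,k)$.

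Your ``anticipated obstacle'' at the one-sided end is not a real obstacle, and the proposed downward induction is unnecessary. The upward induction only needs to run for $2\le k\le n-1$; once $\ga_{n-1}$ is shown to be of type $(0,n-1)$ the proof is complete, since $\ga_n$ was already identified as one-sided at the outset. There is no ``closing up'' to check: the pair of pants between $\ga_{n-1}$ and $\ga_n$ plays no role in the induction, and the M\"obius-band subtlety you worry about never enters. The paper's proof is accordingly shorter---just the single forward induction with the no-triangle observation.
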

\begin{proof}
	By Remark~\ref{valency1}, either $\gamma_2$ or $\gamma_n$ is one-sided.
	By changing the labeling if necessary, we may assume that $\ga_n$ is one-sided. 
	It remains to prove that $\ga_k$ is of type $(0,k)$ for $2\leq k\leq n-1$.
		
	The proof of this is by induction on $k$. 
	Since the vertex $\ga_2$ has valency one,
	it is of type $(0,2)$ by Remark~\ref{valency1}. 
	Assume that $\ga_i$ is of type $(0,i)$ for $i\leq k-1$. 
	If $N$ is cut along all curves in $P$, one gets $n-1$ pairs of pants.
	There is a unique pair of pants $X$ such that 
	two of the holes of $X$ are $\ga_{k-1}$ and $\ga_k$
	as they are adjacent with respect to $P$. 
	If the third hole of $X$ were a curve 
	$\be \in P\setminus\{\ga_{k-1},\ga_k\}$, then $\gap$ would 
	contain a triangle with vertices $\{\be,\ga_{k-1},\ga_{k}\}$, 
	contradicting with the linearity of $P$. 
	Hence, the third hole of $X$ is a hole of $N$
	as well.
	Since $\ga_{k-1}$ bounds a disc with $k-1$ holes, it follows that
	$\ga_k$ bounds a disc with $k$ holes, i.e. it is of type $(0,k)$.
\end{proof} 

 The following lemma is based on \cite[Lemma 5.2]{behrmar06} that proves 
 that if the adjacency graph of a pants decomposition of 
 a torus with $n\geq 4$ holes does not contain a triangle, 
 then this pants decomposition is either linear 
 or a cyclic pants decomposition.

\begin{lemma}\label{notriangle}
	Let $P$ be a pants decomposition of $N$ and 
	let $\gap$ be its adjacency graph. If $\gap$ does 
	not contain any triangle, then $P$ is linear.
\end{lemma}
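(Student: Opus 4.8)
The plan is to control the combinatorics of the adjacency graph $\gap$ entirely through vertex degrees. Recall (from the remarks just above, together with Corollary~\ref{toppants}) that every pants decomposition of $N=N_{1,n}$ consists of exactly $n-1$ curves, precisely one of which is essential one-sided; denote it by $\alpha$, so that all remaining curves of $P$ are two-sided. Cutting $N$ along all curves of $P$ gives a disjoint union $N_P$ of pairs of pants, in which $\alpha$ contributes a single boundary circle and each two-sided curve of $P$ contributes two boundary circles, every such circle lying on exactly one pair of pants.

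First I would prove that every vertex of $\gap$ has degree at most $2$, and that $\alpha$ has degree at most $1$. Since $\alpha$ gives one circle of $N_P$ it meets exactly one pair of pants, and a two-sided curve of $P$ meets at most two. It therefore suffices to show that a pair of pants $Y$ meeting a given curve $\gamma\in P$ contributes at most one neighbour of $\gamma$ in $\gap$: the boundary circles of $Y$ that are not copies of $\gamma$ number at most two, and if two of them were distinct curves $a,b$ of $P$ then $\{\gamma,a,b\}$ would be a triangle in $\gap$, contrary to hypothesis; if instead a single curve accounts for both of these circles (or $\gamma$ itself occupies two boundary circles of $Y$), only one neighbour is produced. (One may also invoke Lemma~\ref{curvetype} to see that the two-sided curves of $P$ are separating, which rules these degenerate configurations out directly.) Hence $\deg_{\gap}(\gamma)\le 2$ for every $\gamma$, and $\deg_{\gap}(\alpha)\le 1$.

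Next I would show $\deg_{\gap}(\alpha)=1$. If $\alpha$ had degree $0$, then the unique pair of pants $Y$ meeting $\alpha$ would have its two boundary circles other than $\alpha$'s circle equal to holes of $N$ --- neither can be a circle of a two-sided curve of $P$, since such a curve would then be adjacent to $\alpha$, and $\alpha$ is the only one-sided curve --- so $N$ would be the union of $Y$ with a M\"obius band glued along $\alpha$, that is $N\cong N_{1,2}$, contradicting $n\ge 4$. Finally, $\gap$ is connected because $N$ is connected; a finite connected graph all of whose vertices have degree at most $2$ is a path or a cycle, and a cycle contains no vertex of degree $1$, so the existence of $\alpha$ forces $\gap$ to be a path. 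Thus $P$ is linear.

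I expect the only real care to be needed in the bookkeeping of the last two arguments: keeping track of which boundary circles of the pairs of pants are holes of $N$ and which are curves of $P$, handling the degenerate possibility that a curve occurs twice on one pair of pants, and citing the (standard) connectedness of $\gap$. Conceptually, however, the crux is the observation that the obligatory essential one-sided curve is a degree-$1$ vertex of $\gap$; this is precisely what excludes the cyclic alternative that appears in the orientable analogue \cite[Lemma 5.2]{behrmar06}, and it is the reason the conclusion here is ``linear'' with no ``cyclic'' option.
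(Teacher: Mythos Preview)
Your proof is correct. Both your argument and the paper's rest on the same observation---that the no-triangle hypothesis forces every pair of pants in $N_P$ to have at least one boundary circle that is a hole of $N$ rather than a curve of $P$---but you package it differently. The paper classifies the pairs of pants into three types (according to how many of their boundary circles come from $P$ and whether one of them is the one-sided curve) and then assembles $N$ linearly, pair of pants by pair of pants, starting from the unique pair of pants containing the one-sided curve. You instead translate the same local constraint into a uniform bound $\deg\le 2$ on the adjacency graph, identify $\alpha$ as a vertex of degree exactly~$1$, and invoke the elementary fact that a finite connected graph of maximum degree~$2$ possessing a leaf is a path. Your version is slightly more streamlined and makes the role of the one-sided curve as the obstruction to a cycle completely explicit, which also clarifies why the ``cyclic'' alternative of \cite[Lemma~5.2]{behrmar06} disappears here; the paper's version is more hands-on but stays closer to that orientable analogue. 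The two arguments are equivalent in content.
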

\begin{proof}
	Suppose that $\gap$ does not contain any triangle. The surface $N_P$
	obtained by cutting $N$ along the elements of $P$
	is a disjoint union of pair of pants. Since $\gap$ does not 
	contain any triangle, at least one hole of 
	each pair of pants is a hole of $N$. If $X$ is a connected component of $N_P$, which is
	a pair of pants, then
	either	
	\begin{enumerate}
		\item  exactly two holes of $X$ are curves in $P$ 
					one of which is one-sided, or 
		\item  exactly two holes of $X$ are two-sided curves in $P$, or 
		\item  exactly one hole of $X$ is a curve in $P$, which must be two-sided. 
	\end{enumerate} 
	
	Since the genus of $N$ is $1$, there is only one pair of pants $X_1$ of type $(1)$. 
	The pair of pants $X_2$ glued to $X_1$ must be of type $(2)$; otherwise, i.e., if type $(3)$, 
	the surface would be $N_{1,3}$, which is impossible. The pair of pant $X_3$ glued to $X_1\cup X_2$ can be 
	of type $(2)$ or $(3)$. If type $(3)$, then $N=N_{1,4}$. By arguing in this way
	we conclude that $N=X_1\cup X_2\cup X_3\cup \cdots \cup X_{n-1}$, where
	$X_1$ is of type $(1)$, $X_{n-1}$ is of type $(3)$ and all others are of type $(2)$. 
	It follows that $\gap$ is linear.
\end{proof}

\begin{lemma}\label{lintolin}
	Let $P\subset \kai_{1,n}$ be a linear pants decomposition 
	of $N$ satisfying the nonadjacency condition with respect to $\kai_{1,n}$.
	Then $\phi(P)$ is a linear pants decomposition of $N$. 
\end{lemma}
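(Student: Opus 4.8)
The goal is to show that the image $\phi(P)$ of a linear pants decomposition $P\subset\kai_{1,n}$ (satisfying the nonadjacency condition) is again linear. Since $\phi$ is locally injective, $\phi(P)$ is again a top dimensional pants decomposition of $N$, and it contains exactly $n-1$ curves. By Lemma~\ref{notriangle}, it suffices to prove that the adjacency graph $\mathcal{A}(\phi(P))$ contains no triangle. Thus the entire argument reduces to ruling out triangles in $\mathcal{A}(\phi(P))$.

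\textbf{Key steps.} First I would record the structure of $\mathcal{A}(P)$: it is a path on $n-1$ vertices, so every vertex has valency one or two, and by Lemma~\ref{valency2} the valency-two vertices are exactly the type-$(0,k)$ curves with $k\geq 3$, while the two endpoints are one one-sided curve and one curve of type $(0,2)$ (Remark~\ref{valency1}). Next, the crucial transfer step: nonadjacency is preserved. Since $P$ satisfies the nonadjacency condition with respect to $\kai_{1,n}$, Lemma~\ref{nonadjacency} applies and tells us that whenever $\alpha,\beta\in P$ are nonadjacent in $\mathcal{A}(P)$, their images $\phi(\alpha),\phi(\beta)$ are nonadjacent in $\mathcal{A}(\phi(P))$. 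Now suppose for contradiction that $\mathcal{A}(\phi(P))$ contains a triangle on vertices $\phi(\gamma_a),\phi(\gamma_b),\phi(\gamma_c)$ for three curves $\gamma_a,\gamma_b,\gamma_c\in P$. Pairwise adjacency of the images forces, by the contrapositive of the preserved-nonadjacency statement, that $\gamma_a,\gamma_b,\gamma_c$ are pairwise adjacent in $\mathcal{A}(P)$ — i.e. $\mathcal{A}(P)$ itself contains a triangle. But $\mathcal{A}(P)$ is a path (it is linear by hypothesis), so it has no triangle, a contradiction. Hence $\mathcal{A}(\phi(P))$ is triangle-free, and Lemma~\ref{notriangle} yields that $\phi(P)$ is linear.

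\textbf{Where the difficulty lies.} The genuine content is not in the final combinatorial deduction — once nonadjacency transfers, the argument is immediate — but in verifying the hypothesis of Lemma~\ref{nonadjacency}, namely that $P$ really does satisfy the nonadjacency condition with respect to $\kai_{1,n}$. Depending on how the statement is set up, this may be folded into the hypothesis of Lemma~\ref{lintolin} (as it is here, where $P$ is assumed to satisfy it), in which case the proof is as above. If instead one needed to establish it, the work would be: for each pair of nonadjacent curves $\alpha,\beta$ in the linear decomposition $P$, to exhibit explicit curves $\delta_\alpha,\delta_\beta\in\kai_{1,n}$ with the required intersection and disjointness pattern. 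Since the model of Section~\ref{sec:model} is completely explicit and $P$ can be taken among the $\beta_i^j$ and $\alpha_1$, this is a finite (if tedious) case-check using arcs on the disc $R$; one would use $\delta_\alpha$ to be an $\alpha_1^j$ or a nearby $\beta$-curve that intersects $\alpha$ but can be isotoped off the rest of $P\setminus\{\alpha\}$, and symmetrically for $\delta_\beta$. I would present the clean version: invoke Lemma~\ref{nonadjacency} directly on the assumed hypothesis, derive that a triangle in $\mathcal{A}(\phi(P))$ pulls back to a triangle in $\mathcal{A}(P)$, contradict linearity of $P$, and conclude via Lemma~\ref{notriangle}.
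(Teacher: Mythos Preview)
Your proof is correct and in fact cleaner than the paper's. Both arguments invoke Lemma~\ref{nonadjacency} and finish with Lemma~\ref{notriangle}, but the route between them differs. The paper splits into cases: for $n=4$ it observes that a non-linear three-vertex graph must be a triangle and derives a contradiction from the single nonadjacent pair in $\mathcal A(P)$; for $n\geq 5$ it argues that a triangle in $\mathcal A(\phi(P))$, combined with connectedness and $|P|\geq 4$, forces some $\phi(\gamma)$ to have valency at least three, and then compares the count of vertices nonadjacent to $\phi(\gamma)$ (at most $n-5$) with the count of vertices nonadjacent to $\gamma$ in the path $\mathcal A(P)$ (either $n-3$ or $n-4$). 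Your argument bypasses both the case split and the counting by taking the contrapositive of Lemma~\ref{nonadjacency} directly: adjacency in $\mathcal A(\phi(P))$ forces adjacency in $\mathcal A(P)$, so a triangle downstairs would pull back to a triangle upstairs, contradicting that $\mathcal A(P)$ is a path. This is uniform in $n$ and strictly simpler. Your closing paragraph about verifying the nonadjacency condition is unnecessary here, since it is an explicit hypothesis of the lemma (and is handled separately in Lemma~\ref{lineer->nonadj}), but it does no harm.
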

\begin{proof}
	Suppose first that $n=4$. In this case, the graph $\gap$ contains 
	three vertices and two of them, say $\alpha,\beta$ 
	are nonadjacent to each other. If the adjacency graph 
	$\gapf$ of $\phi(P)$ is not linear, then it must be a triangle, i.e.
	each vertex has valency two. In particular,
	$\phi(\alpha)$ is adjacent to $\phi(\beta)$.
	Since $\phi$ preserves nonadjacency with respect to $P$ by Lemma~\ref{nonadjacency}, 
	this is impossible. 
	
	Suppose now that $n\geq 5$. Then $P$ contains at least four curves. Assume that $\gapf$ 
	contains a triangle. Since $\gapf$ is connected and contains at least four vertices, there 
	exists a vertex $\phi(\ga)$ in this triangle which has valency at least three. 
	Thus, 
	there exist at most $n-5$ vertices in $\gapf$ nonadjacent 
	to $\phi(\ga)$. On the other hand, 
	the number of the vertices in $\gap$ nonadjacent to $\ga$ is 
	either $n-3$ or $n-4$.
	Since $\phi$ preserves nonadjacency by Lemma~\ref{nonadjacency}, 
	this is a contradiction.
	 
	It follows now from Lemma~\ref{notriangle} that $\phi(P)$ is linear.	
\end{proof}

\begin{lemma}\label{lineer->nonadj}
	Let $P\subset \kai_{1,n}$ be a linear pants decomposition of 
	$N$ containing $\alpha_1$. Then $P$ satisfies the nonadjacency condition. 
\end{lemma}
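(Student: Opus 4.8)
The plan is to verify the nonadjacency condition directly by exhibiting, for each pair of nonadjacent vertices in $P$, the required pair of curves $\dela,\delb$ inside $\kai_{1,n}$. First I would pin down the structure of a linear pants decomposition $P\subset\kai_{1,n}$ containing $\alpha_1$. By Lemma~\ref{lingraph}, after relabeling, the vertices of $P$ are $\ga_2,\ga_3,\ldots,\ga_n$ with $\ga_i$ adjacent to $\ga_{i+1}$, where $\ga_n$ is one-sided (hence $\ga_n=\alpha_1$, as $\alpha_1$ is the only one-sided curve of $\kai_{1,n}$ in $P$ up to the involution) and $\ga_i$ is of type $(0,i)$ for $2\le i\le n-1$. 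Since all curves of $\kai_{1,n}$ other than $\alpha_1$ are of the form $\beta_s^t$, I would identify each $\ga_i$ with a concrete $\beta_{s_i}^{t_i}$; the nesting of the discs they bound (a type-$(0,i)$ curve sits inside a type-$(0,i+1)$ curve, all "parallel" in the model $R$) forces $P$ to be one of the explicit linear chains appearing in Lemma~\ref{cccc}, namely $\{\alpha_1,\beta_1^n,\beta_1^{n-1},\ldots,\beta_1^3\}$ up to the cutting-along-$\alpha_g$ symmetry and reindexing of the holes.

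Next, with $P$ in this explicit form, I would treat the nonadjacent pairs. Nonadjacent vertices in a linear chain $\ga_2,\ldots,\ga_n$ are exactly the pairs $\{\ga_k,\ga_\ell\}$ with $|k-\ell|\ge 2$. Write such a pair as $\{\be_1^p,\be_1^q\}$ with $p<q$ (or one of them equal to $\alpha_1$). The key geometric observation is that in the model $R$ one can slide the endpoint of the arc defining $\be_1^p$ past the single hole that distinguishes $\be_1^p$ from $\be_1^{p\pm1}$, producing a curve $\delta$ that still lies in $\kai_{1,n}$ (it is again some $\beta_i^j$ or $\alpha_1$), is disjoint from every element of $P$ except $\be_1^p$ itself, and has $i(\delta,\be_1^p)\ne 0$. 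Explicitly: for $\dela$ I would take a curve $\beta_1^{p'}$ obtained by moving one endpoint of $\be_1^p$ to an adjacent edge $e_{p'}$ with $|p-p'|=1$ (choosing the direction toward the "free" side so that $\dela$ avoids $\be_1^q$ and all intermediate $\ga_i$), and symmetrically for $\delb$ near $\be_1^q$; when one of the two curves is $\alpha_1$, the role of $\dela$ (or $\delb$) is played instead by one of the curves $\alpha_1^j$, which intersects $\alpha_1$ but is disjoint from the $\beta$'s of $P$. I would then check that $\dela$ and $\delb$ are disjoint from each other, disjoint from $\be_1^q$ and $\be_1^p$ respectively, and that each of $(P\setminus\{\be_1^p\})\cup\{\delb\}$, $(P\setminus\{\be_1^q\})\cup\{\dela\}$ and $(P\setminus\{\be_1^p,\be_1^q\})\cup\{\dela,\delb\}$ is again a pants decomposition — this follows because replacing a chain link by a neighboring parallel link keeps the complement a union of pairs of pants.

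The main obstacle I expect is the bookkeeping at the two ends of the chain and near $\alpha_1$: when $\be_1^p=\ga_2$ is of type $(0,2)$ there is only one direction to slide, and when one of the nonadjacent curves is $\alpha_1=\ga_n$ one must switch from $\beta$-type to $\alpha$-type perturbing curves and re-verify disjointness from the whole chain, using the explicit coordinates of Figure~\ref{arcs}. A secondary point requiring care is confirming that the perturbed curves genuinely lie in $\kai_{1,n}$ (i.e. the index conditions $2\le|i-j|\le n-1$, or $j\ne i,i-1$, are met) rather than just in $\CC(N)$; this is where the hypothesis $n\ge 4$ and the precise index ranges in the definition of $\kai_{1,n}$ enter. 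Once these boundary cases are dispatched, the generic sliding argument handles all remaining pairs uniformly, and the lemma follows.
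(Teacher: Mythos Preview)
Your reduction step is where the argument breaks. You claim that every linear pants decomposition $P\subset\kai_{1,n}$ containing $\alpha_1$ is the straight chain $\{\alpha_1,\beta_1^n,\ldots,\beta_1^3\}$ up to symmetry, but this is false: such chains can zigzag. For instance, with $n=5$ the set $P=\{\alpha_1,\beta_1^5,\beta_2^5,\beta_3^5\}$ is a linear pants decomposition in $\kai_{1,5}$ (the three $\beta$--curves are nested of types $(0,4),(0,3),(0,2)$), yet it is neither the straight chain nor any $P_i^j$ from Lemma~\ref{cccc}, and no automorphism of $\kai_{1,n}$ carries it to one --- the pointwise stabilizer of $\kai_{1,n}$ is the order-two involution, which fixes every $\beta_i^j$. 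Since your subsequent construction writes each nonadjacent pair as $\{\beta_1^p,\beta_1^q\}$ and slides an endpoint of $\beta_1^p$, it only treats this one special $P$.

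The paper avoids any global normalization of $P$ by reorganizing the verification. Rather than producing $\delta_\alpha,\delta_\beta$ for each nonadjacent pair, it produces for every single vertex $\gamma\in P$ one curve $\delta_\gamma\in\kai_{1,n}$ with $(P\setminus\{\gamma\})\cup\{\delta_\gamma\}$ again a pants decomposition. For a nonadjacent pair $\alpha,\beta$ in a linear $P$ there is always a separating curve of $P$ lying strictly between them, so $\delta_\alpha$ and $\delta_\beta$ automatically sit on opposite sides of it and are disjoint; the remaining requirements of the nonadjacency condition are then immediate. Finding $\delta_\gamma$ needs only the local data $(\gamma_{k-1},\gamma_k,\gamma_{k+1})$: if $\gamma_k=\beta_i^j$ then each neighbor is one of $\beta_{i\pm1}^{j}$ or $\beta_i^{j\pm1}$, and the paper records the answer in a short table. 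Your sliding intuition is exactly the right mechanism, but it has to be applied at a general $\beta_i^j$ relative to its actual neighbors in $P$, not only at curves of the form $\beta_1^p$.
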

\begin{proof}
We may assume, by Lemma~\ref{lingraph}, that the elements of the pants decomposition
$P$ are $\gamma_2,\gamma_3,\ldots,\gamma_{n-1},\gamma_n=\alpha_1$, where
$\gamma_k$ is of type $(0,k)$ for $2\leq k\leq n-1$. 
We claim that for every $\gamma\in P$, there is a vertex $\delta_{\gamma}$
in $\kai_{1,n}$ such that $\delta_\gamma\neq \gamma$ and 
$(P\setminus\{ \gamma\})\cup \{ \delta_\gamma\}$ is also a pants decomposition.
Since for any two nonadjacent vertices $\al$ and $\be$ 
there is a separating vertex $\delta\in P$ such that $\al$ and $\be$ lie on different 
connected components of $N_\delta$, the lemma follows from this claim, 

\begin{table}[ht]
\begin{center}
\centering
\begin{tabular}{ ||c|c|c|c|c|| } 
 \hline
	$k$ & $\ \ \gamma=\gamma_k $ \ \ 
	& \ \ $\gamma_{k-1 }$ \ \  & \ \ $\gamma_{k+1 }$ \ \ 	
	&\ \  $\delta_\gamma$ \ \ 
\\   \hline \hline
	 \multirow{2}{*}{$2$} &  \multirow{2}{*}{$\beta_{j}^{j+2}$}
	 & \multirow{2}{*}{---}
	  & $\beta_{j}^{j+3}$ & $\beta_{j+1}^{j+3}$ 
\\  \cline{4-5} 
	& & &$\beta_{j-1}^{j+2}$& $\beta_{j-1}^{j+1}$
 \\  \hline
	\multirow{4}{*}{$2<i<n-1$} 
	& \multirow{4}{*}{$\beta_{i}^{j}$} 
 	& \multirow{2}{*}{$\beta_{i}^{j-1}$} 
	 & $\beta_{i}^{j+1}$  
	 &  $\beta_{j-1}^{j+1}$
\\   \cline{4-5} 
	&& & $\beta_{i-1}^{j}$  & $\beta_{i-1}^{j-1}$
\\ \cline{3-5}
	&&  \multirow{2}{*}{$\beta_{i+1}^{j}$} 
	&  $\beta_{i}^{j+1}$   
 	 & $\beta_{i+1}^{j+1}$ 
\\   \cline{4-5}
	&&& $\beta_{i-1}^{j}  $ & $\beta_{i-1}^{i+1}$ 
  \\          
 \hline 
	\multirow{4}{*}{$n-1$}&\multirow{2}{*}{$\beta_{1}^{n}$} 
 	 & $\beta_{2}^{n}$  &     \multirow{2}{*}{$\alpha_1$}   
  	& $\beta_{2}^{n+1}$ 
  \\    \cline{3-3}     \cline{5-5} 
	& & $\beta_{1}^{n-1}$ && $\beta_{n-1}^{n+1}$
\\   \cline{2-5}
	& \multirow{2}{*}{$\beta_{2}^{n+1}$} & $\beta_{2}^{n}$
	&  \multirow{2}{*}{$\alpha_1$}    & $\beta_{1}^{n}$  
\\  \cline{3-3} \cline{5-5}
	&&$\beta_{3}^{n+1}$&&$\beta_{1}^{3}$ 
\\  
\hline  
	\multirow{2}{*}{$n$}&\multirow{2}{*}{$\alpha_1$} 
  	& $\beta_{1}^{n}$  & & $\alpha_{1}^{n}$ 
\\    \cline{3-3}   \cline{5-5}   
	& &$\beta_{2}^{n+1}$ &&$\alpha_{1}^{2}$ 
\\    
 \hline
 \end{tabular}
\end{center}
\end{table}

If $\gamma=\alpha_1$ then $\gamma_{n-1}\in \{ \beta_1^n, \beta_2^{n+1}\}$ 
since there are only two curves of type $(0,n-1)$ in $\kai_{1,n}$.
If $\gamma_{n-1}=\beta_1^n$ then  $\delta_{\gamma}=\alpha_1^n$, and if 
$\gamma_{n-1}=\beta_2^{n+1}$ then $\delta_{\gamma}=\alpha_1^2$.
For each vertex $\gamma=\gamma_k$, the element $\delta_\gamma$, unique in cases $k\neq n-1$, 
is given in the table. 
\end{proof}

\begin{corollary}\label{cor:alpha1->nonadj}
	If $P\subset \kai_{1,n}$ is a linear pants decomposition of 
	$N$ containing $\alpha_1$, then $\phi(P)$ is linear. 
\end{corollary}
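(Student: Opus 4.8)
The plan is to combine the two preceding lemmas in the obvious way. Given a linear pants decomposition $P\subset\kai_{1,n}$ containing $\alpha_1$, Lemma~\ref{lineer->nonadj} tells us that $P$ satisfies the nonadjacency condition with respect to $\kai_{1,n}$. Once we know this, the hypotheses of Lemma~\ref{lintolin} are met: $P$ is a linear pants decomposition contained in $\kai_{1,n}$ satisfying the nonadjacency condition, and $\phi:\kai_{1,n}\to\CC(N)$ is a locally injective simplicial map. Hence Lemma~\ref{lintolin} yields directly that $\phi(P)$ is a linear pants decomposition of $N$. So the corollary is a one-line consequence; the real content has already been established in Lemmas~\ref{lineer->nonadj} and~\ref{lintolin}.

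Concretely, I would write: \emph{By Lemma~\ref{lineer->nonadj}, the pants decomposition $P$ satisfies the nonadjacency condition. The result now follows from Lemma~\ref{lintolin}.} If one wanted to be slightly more careful, one should also note that $\phi(P)$ is indeed a top dimensional pants decomposition — this is guaranteed by the remark following the definition of locally injective simplicial maps, since $\phi$ preserves the dimension of simplices and in particular sends a top dimensional pants decomposition to a top dimensional pants decomposition — but Lemma~\ref{lintolin} already incorporates this, so no extra argument is needed.

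There is essentially no obstacle here: the corollary is a formal composition of two already-proved lemmas, packaged for convenient reference in the $g=1$ induction base of the Main Theorem. The only thing to watch is that the nonadjacency condition in Lemma~\ref{lintolin} is stated with respect to $\kai_{1,n}$, which matches exactly the conclusion of Lemma~\ref{lineer->nonadj}, so the two fit together without any adjustment. Thus the proof is simply:

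\begin{proof}
By Lemma~\ref{lineer->nonadj}, the linear pants decomposition $P$ satisfies the nonadjacency condition with respect to $\kai_{1,n}$. Since $\phi:\kai_{1,n}\to\CC(N)$ is locally injective, it follows from Lemma~\ref{lintolin} that $\phi(P)$ is a linear pants decomposition of $N$.
\end{proof}
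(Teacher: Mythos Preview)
Your proof is correct and is exactly the intended argument: the corollary is an immediate consequence of Lemma~\ref{lineer->nonadj} followed by Lemma~\ref{lintolin}, and the paper presents it as such (with no separate proof given). There is nothing to add or change.
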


\subsubsection {$\phi$ preserves topological types of vertices}

We now prove that the locally injective simplicial map $\phi:\kai_{1,n}\rightarrow\CC(N)$
preserves the topological types of vertices of $\kai_{1,n}$.

\begin{proposition}\label{g1ct}
       For every vertex  $\gamma\in \kai_{1,n}$,
       the topological types of $\phi(\gamma)$ and $\gamma$ are the same.
\end{proposition}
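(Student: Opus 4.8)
The plan is to show separately that $\phi$ preserves the three topological types occurring in $\kai_{1,n}$: the one-sided curves $\alpha_1, \alpha_1^j$, and the separating curves $\beta_i^j$ of each type $(0,k)$ with $2 \le k \le n-1$. First I would handle the curves of type $(0,k)$ for $k \ge 3$. Given such a curve $\gamma = \beta_i^j$, I would invoke Lemma~\ref{cccc} to find a linear pants decomposition $P = P_i^j \subset \kai_{1,n}$ containing both $\alpha_1$ and $\gamma$. By Lemma~\ref{lineer->nonadj}, $P$ satisfies the nonadjacency condition, so by Lemma~\ref{nonadjacency} and Corollary~\ref{cor:alpha1->nonadj}, $\phi(P)$ is a linear pants decomposition of $N$ and $\phi$ maps $\gap$ into $\gapf$ preserving nonadjacency. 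Since $\phi$ is locally injective and simplicial, it sends $P$ bijectively to $\phi(P)$, and it sends the adjacency graph $\gap$ onto $\gapf$ preserving \emph{non}adjacency; because both graphs are linear paths on the same number of vertices, $\phi$ must actually induce a graph isomorphism $\gap \to \gapf$ (a bijection of a linear path to a linear path preserving non-edges is an isomorphism), hence it preserves valencies. By Lemma~\ref{valency2}, the vertices of valency two in a linear pants decomposition are exactly those of type $(0,k)$ for some $k \ge 3$; thus $\phi(\gamma)$ has valency two in $\gapf$ and is therefore of type $(0,k')$ for some $k' \ge 3$.

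The remaining work for the type-$(0,k)$ curves is to pin down $k' = k$, i.e.\ to propagate the type along the path. Here I would use Lemma~\ref{lingraph}: the linear pants decomposition $P$, after relabeling, has elements $\gamma_2, \dots, \gamma_{n-1}, \gamma_n = \alpha_1$ with $\gamma_m$ of type $(0,m)$, and likewise $\phi(P)$ has elements of types $(0,2), (0,3), \dots, (0,n-1)$ and one one-sided curve, arranged linearly. Since $\phi$ preserves the path structure and the valency-one ends, and since along a linear path the type $(0,m)$ increases monotonically by one from the type-$(0,2)$ end (a pair of pants glued onto a disc-with-$(m-1)$-holes produces a disc-with-$m$-holes, as in the proof of Lemma~\ref{lingraph}), the position of $\gamma$ in the path determines $k$, and the same position in $\phi(P)$ determines the type of $\phi(\gamma)$ to be the same $(0,k)$. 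One must also check that $\phi$ cannot reverse the path in a way that swaps which end is one-sided; but this is forced because the one-sided end has valency one and is \emph{not} of type $(0,2)$, so the type-$(0,2)$ end (the only other valency-one vertex) is distinguished from it, and $\phi$ preserves this distinction once we know it preserves the types at the ends (the type-$(0,2)$ end being characterized as the valency-one vertex adjacent to a valency-two vertex of type $(0,3)$, inductively). In particular this also shows $\phi(\alpha_1)$ is one-sided: $\phi(\alpha_1)$ is the valency-one end of $\gapf$ that is not of type $(0,2)$, and in a linear pants decomposition of $N_{1,n}$ such a vertex is one-sided by Remark~\ref{valency1}.

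It remains to treat the one-sided curves $\alpha_1^j$. For these I would choose, for each $j$, a linear pants decomposition $P \subset \kai_{1,n}$ containing $\alpha_1^j$ that also contains enough $\beta$-curves to satisfy the nonadjacency condition (built analogously to the decompositions in Lemma~\ref{cccc}, but using $\alpha_1^j$ in place of $\alpha_1$ as the one-sided end; one checks directly from the model that such a decomposition lies in $\kai_{1,n}$ and is linear, and that the nonadjacency condition holds by an argument parallel to Lemma~\ref{lineer->nonadj} using the available $\alpha_1^k$ substitutes). Then the same reasoning as above shows $\phi(P)$ is linear, $\phi$ preserves valencies and the end-vertex that is not of type $(0,2)$, and hence $\phi(\alpha_1^j)$ is one-sided. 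Combining the three cases gives the proposition.

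\textbf{Main obstacle.} The subtle point is not the type-$(0,k)$ curves, which are cleanly controlled by valency and the monotone propagation along a linear path, but rather verifying that for \emph{every} one-sided vertex $\alpha_1^j$ one can produce a linear pants decomposition inside $\kai_{1,n}$ containing it and satisfying the nonadjacency condition. The existing Lemmas~\ref{cccc} and~\ref{lineer->nonadj} are stated for $\alpha_1$, so the argument for $\alpha_1^j$ needs either an explicit construction in the model (exhibiting the decomposition and the substitute curves $\delta_\gamma$, as in the table in the proof of Lemma~\ref{lineer->nonadj}) or an appeal to a symmetry of $\kai_{1,n}$ carrying $\alpha_1$ to $\alpha_1^j$; establishing that such decompositions exist and lie in $\kai_{1,n}$ is where the bookkeeping is heaviest.
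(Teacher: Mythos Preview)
Your treatment of the curves $\beta_i^j$ via linear pants decompositions and Lemma~\ref{lingraph} is essentially the paper's approach, and your observation that a bijection between two linear paths preserving non-edges must be a graph isomorphism is correct. However, there is a genuine gap at the step where you decide \emph{which} end of $\phi(P)$ is one-sided. You write that ``the type-$(0,2)$ end [is] characterized as the valency-one vertex adjacent to a valency-two vertex of type $(0,3)$, inductively'', but this is circular: you need to know the orientation of the path to assign the types $(0,3),(0,4),\dots$ to the interior vertices, and you need to know those types to determine the orientation. Within a single linear pants decomposition there is nothing in your argument that breaks the symmetry between the two valency-one vertices of $\gapf$, so you cannot yet conclude that $\phi(\alpha_1)$ is one-sided rather than of type $(0,2)$.

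The paper resolves this with a short extra argument that you are missing: it takes \emph{two} linear pants decompositions $P_1,P_2\subset\kai_{1,n}$ that share $\alpha_1$ but whose other endpoints, $\beta_1^3$ and $\beta_{n-1}^{n+1}$, are disjoint from each other. In $\phi(P_1)$ and $\phi(P_2)$ the curves $\phi(\beta_1^3)$ and $\phi(\beta_{n-1}^{n+1})$ are the valency-one vertices opposite $\phi(\alpha_1)$, hence they have the same topological type as each other (both one-sided or both of type $(0,2)$). But on $N_{1,n}$ any two one-sided curves must intersect, and $\phi(\beta_1^3),\phi(\beta_{n-1}^{n+1})$ are disjoint; so both are of type $(0,2)$, forcing $\phi(\alpha_1)$ to be one-sided. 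Once this is known, Lemma~\ref{lingraph} pins down the type of every $\phi(\beta_i^j)$ exactly as you describe.

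Your plan for the curves $\alpha_1^j$ is workable in principle but unnecessarily heavy, and your ``main obstacle'' disappears once the above is fixed. After you know every $\phi(\beta_i^j)$ is two-sided, take any (top dimensional) pants decomposition $P\subset\kai_{1,n}$ containing $\alpha_1^j$; all other curves in $P$ are $\beta$-curves, so all of their images are two-sided. Since $\phi(P)$ is a pants decomposition of $N_{1,n}$ and must contain exactly one one-sided curve (Corollary~\ref{toppants}), that curve is $\phi(\alpha_1^j)$. No nonadjacency analysis or analogue of Lemma~\ref{lineer->nonadj} for $\alpha_1^j$ is needed.
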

\begin{proof}  
  Suppose that $P=\{ \ga_2,\ga_3,\ldots ,\ga_{n-1},\ga_n =\alpha_1\}$ is a linear pants decomposition contained in 
  $\kai_{1,n}$  such that $\ga_k$ is of type 
  $(0,k)$ and  is connected to $\ga_{k+1}$ in the adjacency graph $\gap$ for $k=2,3,\ldots,n-1$. 
 By Corollary~\ref{cor:alpha1->nonadj},  $\phi(P)$ is a linear pants decomposition of $N$. 
 Since $\ga_2$ and $\ga_n$ have valency one in $\gap$, 
 $\phi(\ga_2)$ and $\phi(\ga_n)$ have valency one  in $\gapf$. 
 By Remark~\ref{valency1}, one of $\phi(\ga_2)$ and $\phi(\ga_n)$ 
 is one-sided and the other is of type $(0,2)$.
	
 Let us consider the linear pants decompositions 
 $P_1=\{ \beta_{1}^{3}, \beta_{1}^{4}, \beta_{1}^{5}, \ldots, \beta_{1}^{n}, \alpha_{1}\}$ and 
 $P_2=\{ \beta_{n-1}^{n+1}, \beta_{n-2}^{n+1},  \ldots,  \beta_{3}^{n+1},    \beta_{2}^{n+1}, 
       \alpha_{1}\}$.
 It follows from the argument in the previous paragraph that the topological types of
 $\phi(\beta_{1}^{3})$ and  $\phi(\beta_{n-1}^{n+1})$ are 
 the same; they are either both one-sided or both of 
 type $(0,2)$. Since they are disjoint and since any two one-sided curves on $N$
 must intersect, these two curves must be of type $(0,2)$. Hence,  $\phi(\alpha_1)$ is one-sided.
  
 Let us now consider a curve $\beta_i^j$ of type $(0,k)$ so that $k=j-i$. 
 By Lemma~\ref{cccc},  
 there is a linear pants decomposition in $\kai_{1,n}$ containing $\beta_i^j$ and $\alpha_1$.
 By Corollary~\ref{cor:alpha1->nonadj},  
 $\phi(P)$ is a linear pants decomposition and by Lemma~\ref{lingraph},
 $\phi(\beta_i^j)$ is of type $(0,k)$.
 
 Finally, for every $\alpha_1^k$, there is a pants decomposition $P$ containing it. 
 Since $\phi(\gamma)$ is two-sided
 for every curve $\gamma \in P$ different than $\phi(\alpha_1^k)$, the curve  
 $\phi(\alpha_1^k)$ must be one-sided.
 
 This finishes the proof of the proposition.
 \end{proof}

\subsection{The case $g\geq2$.} \ 

 Let $g\geq 2$, $g+n\geq 5$ and let $N$ denote the non-orientable surface $N_{g,n}$ 
 of genus $g$ with $n$ holes given in Section~\ref{sec:model}.
 Recall that by Corollary~\ref{toppants}, a top dimensional pants decomposition of $N$
 contains exactly $2g+n-3$ elements.
 Let $\phi:\kaign\rightarrow\CC(N)$ be a locally injective simplicial map.
 Our aim in this subsection is to show that $\phi$ preserves topological types of
 each $\alpha_i$ and of the curves of type $(2,0)$.

\subsubsection{Closed Surfaces} \

Assume that $n=0$ so that $g\geq 5$.  

\begin{lemma}\label{nonadjforclo}
	Let $P$ be a top dimensional pants decomposition of $N$, 
	$\gap$ be its adjacency graph 
	and $\gamma$ be a vertex in $\gap$. The vertex
	$\ga$ is essential one-sided if and only if its valency in $\gap$ is two. 
\end{lemma}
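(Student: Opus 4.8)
The plan is to analyze, for a closed non-orientable surface $N = N_{g,0}$ with $g \geq 5$, which pants-of-pants in $N_P$ a vertex $\gamma$ of a top dimensional $P$ can bound, and to count the resulting valency in $\gap$. First I would recall from Corollary~\ref{toppants} that a top dimensional pants decomposition of $N_{g,0}$ contains $2g-3$ curves, exactly $g$ of which are essential one-sided; cutting along all of them yields a disjoint union of pairs of pants, and since $n=0$ every boundary component of every pair of pants comes from a curve in $P$. A one-sided curve $\alpha$ contributes only \emph{one} boundary circle to $N_P$ (its regular neighborhood is a Möbius band, so cutting along it produces a single boundary component), whereas a two-sided curve contributes \emph{two}. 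This is the structural fact that drives the whole argument.

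The forward direction: suppose $\gamma$ is essential one-sided. Then $\gamma$ appears as a single boundary circle of a single pair of pants $Y$ in $N_P$; the other two boundary circles of $Y$ are boundary circles of other curves in $P$ (they cannot be boundary components of $N$, since $n=0$). These two circles either come from two distinct curves $\beta_1, \beta_2 \in P$, giving $\gamma$ valency two, or they come from a single two-sided curve $\beta \in P$ whose two sides both meet $Y$. I would rule out the latter: if both boundary circles of $Y$ came from one curve $\beta$, then $Y \cup \beta$ would be a one-holed Klein bottle (glue the two cuffs of a pair of pants to each other) with boundary $\gamma$; but then $\gamma$ would bound a one-holed Klein bottle on one side, and since $N$ is closed of genus $g \geq 5$, the complement would be a non-orientable surface of genus $g-2 \geq 3$ — this is fine topologically, so I need the finer point that $Y$ contains only the single one-sided curve $\gamma$ among curves of $P$ in its closure, forcing the count. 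Actually the cleanest route: $\gamma$ essential one-sided $\Rightarrow$ its link consists of exactly the curves adjacent to it, and the two cuffs of $Y$ other than $\gamma$ are distinct elements of $P$ because a top dimensional $P$ has $2g-3$ curves arranged so that no single curve is "doubly adjacent" across one pair of pants — I would verify this last claim by the Euler-characteristic / curve-count bookkeeping already used in the proof of Lemma~\ref{numberofcurve}, noting that a doubly-adjacent two-sided curve would let us replace it by two curves and increase the dimension, contradicting top dimensionality.

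The reverse direction: suppose $\gamma$ has valency exactly two in $\gap$, and suppose toward a contradiction that $\gamma$ is not essential one-sided. Then $\gamma$ is two-sided (the only other option for a vertex of a pants decomposition, since a top dimensional decomposition on a closed surface contains no inessential one-sided curves), so $\gamma$ contributes two boundary circles to $N_P$, lying in pairs of pants $Y_1, Y_2$ (possibly $Y_1 = Y_2$). Each $Y_i$ has its other boundary circles coming from curves in $P$. If $Y_1 \neq Y_2$, the other two cuffs of $Y_1$ plus the other two cuffs of $Y_2$ give up to four adjacent curves; valency two then forces heavy coincidences among these four curves — I would show each such coincidence either produces a triangle in $\gap$ (fine in general but I can then re-run a counting argument) or, more decisively, produces a separating curve of one of the forbidden types in Lemma~\ref{curvetype}, or drops the curve count below $2g-3$, contradicting that $P$ is top dimensional. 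If $Y_1 = Y_2$ (both sides of $\gamma$ in the same pair of pants), then $Y_1 \cup \gamma$ is a one-holed Klein bottle or one-holed torus; its single remaining boundary circle is a curve $\beta \in P$, so $\gamma$ has valency \emph{one}, not two — contradiction. So the only way to get valency exactly two is $\gamma$ essential one-sided.

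\textbf{Expected main obstacle.} The delicate part is the case analysis in the reverse direction when $\gamma$ is two-sided with its two sides in two distinct pairs of pants: a priori $\gamma$ could have valency two if the four neighboring cuffs collapse to exactly two curves, and I must show this is incompatible with $P$ being \emph{top dimensional} (not merely a pants decomposition). I expect to handle this by the same curve-counting / essential-one-sided-count bookkeeping from Corollary~\ref{toppants} and Lemma~\ref{numberofcurve}: a top dimensional $P$ pins down the local structure tightly enough that a two-sided $\gamma$ always has valency $\geq 3$ unless a reduction of dimension occurs. The one-sided/two-sided "one cuff vs. two cuffs" dichotomy is the key technical lever throughout.
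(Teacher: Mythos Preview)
Your overall direction is right, but you are missing the one lemma that dissolves both of the difficulties you flag. The paper's proof begins by invoking Lemma~\ref{curvetype}: in a top dimensional pants decomposition of the closed surface $N_{g,0}$, every curve is either essential one-sided or separating of type $(k,0)$ with $2\le k\le g-2$. In particular there are \emph{no} two-sided nonseparating curves in $P$. Once you use this, the argument becomes a two-line computation in each direction rather than a case analysis.

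Forward direction: your worry that the two non-$\gamma$ cuffs of the pair of pants $Y$ might come from a single two-sided curve $\beta$ is ruled out immediately, because such a $\beta$ would be two-sided nonseparating (both its sides lie in the same component $Y$), which Lemma~\ref{curvetype} forbids. So the two cuffs are distinct elements of $P$ and the valency is two. Your proposed fix via ``replace $\beta$ by two curves and increase dimension'' is essentially re-proving the relevant case of Lemma~\ref{curvetype} by hand; just cite it.

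Reverse direction: your ``expected main obstacle'' disappears for the same reason. If $\gamma$ is not essential one-sided, Lemma~\ref{curvetype} forces $\gamma$ to be separating of type $(k,0)$ with $2\le k\le g-2$, so $N_\gamma$ has two components $N_{k,1}$ and $N_{g-k,1}$, neither a pair of pants. The two sides of $\gamma$ lie in different components, so automatically $Y_1\neq Y_2$; the two non-$\gamma$ cuffs of each $Y_i$ are distinct (again, a coincidence would produce a two-sided nonseparating curve in $P$); and cuffs from $Y_1$ cannot equal cuffs from $Y_2$ since they lie in different components of $N_\gamma$. Hence the valency is four, not two. Your proposed resolution via ``curve-counting bookkeeping'' is vague and unnecessary once you see that Lemma~\ref{curvetype} already excludes every configuration you were worried about.
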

\begin{proof} Since $N$ is closed, by Lemma~\ref{curvetype}, 
	the curve $\ga$ is either essential one-sided or  
	of type $(k,0)$ for some $k$,  where $2 \leq k \leq g-2$.
	
	Suppose that $\ga$ is essential and one-sided. 
	The surface obtained by cutting $N$ along the simple closed curves 
	in $P \setminus \{\ga\}$ is the disjoint union of 
	$g-3$ pairs of pants and a subsurface $X$
	homeomorphic to the real projective plane with two holes. The curve   
	$\ga$ lies on $X$. The holes of $X\subset N$ come from two distinct simple 
	closed curves in $P \setminus \{\ga\}$.  
	Hence,  the valency of $\ga$ in $\gap$ is two.
	
	Let $\delta$ be a simple closed curve in $P$ of type $(k,0)$ for some $k$
	with $2 \leq k \leq g-2$.    
	The  surface $N_\delta$ obtained by cutting $N$ along $\delta$
	has two connected components $X_1$ and $X_2$, 
	one of them is homeomorphic to $N_{k,1}$, the other is homeomorphic to $N_{g-k,1}$. 
	Since $N$ is closed, each of $X_1$ and $X_2$ contains two of elements in $P$ adjacent to  
	$\delta$ in $\gap$. Hence, valency of 
	$\delta$ is four in $\gap$. 
	
	Since a one-sided curve contained in a top dimensional pants decomposition must be essential, 
	the proof of the lemma is complete.
\end{proof}

\begin{lemma}\label{cloct}
	For each $i=1,2,\ldots,g$, 
	the curve $\phi(\alpha_i)$ is essential one-sided.
\end{lemma}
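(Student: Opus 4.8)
The plan is to produce a single top dimensional pants decomposition $P\subset\kai_{g,0}$ with two features: (a) every $\alpha_i$ has valency two in the adjacency graph $\gap$, and (b) $P$ satisfies the nonadjacency condition with respect to $\kai_{g,0}$. Granting this, the rest is short. Since $\phi$ is locally injective, $\phi(P)$ is again a top dimensional pants decomposition of $N=N_{g,0}$, and by Lemma~\ref{nonadjacency} the map $\phi$ sends nonadjacent pairs of $P$ to nonadjacent pairs of $\phi(P)$. The whole simplex $P$ lies in the star of $\alpha_i$, so $\phi$ is injective on $P$; hence the $2g-6$ curves of $P$ that are nonadjacent to $\alpha_i$ are carried to $2g-6$ distinct curves of $\phi(P)$ nonadjacent to $\phi(\alpha_i)$. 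As $|\phi(P)|=2g-3$, the vertex $\phi(\alpha_i)$ has valency at most $(2g-3)-1-(2g-6)=2$ in the adjacency graph of $\phi(P)$. By Lemma~\ref{nonadjforclo} every vertex of that graph has valency $2$ (if essential one-sided) or $4$ (if of type $(k,0)$), so $\phi(\alpha_i)$ must have valency exactly $2$, i.e.\ it is essential one-sided.

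For $P$ I would take $P=\{\alpha_1,\dots,\alpha_g\}\cup\{\beta_g^{2},\beta_g^{3},\dots,\beta_g^{g-2}\}$. All these curves belong to $\kai_{g,0}$; in the model of Section~\ref{sec:model} the arcs representing them can be drawn pairwise disjointly — the arcs joining $e_g$ to $e_2,\dots,e_{g-2}$ are nested and each misses every $s_m$ — and $|P|=g+(g-3)=2g-3$, so $P$ is a top dimensional pants decomposition. Cutting $N$ along $\beta_g^{2},\dots,\beta_g^{g-2}$ gives one copy of $N_{2,1}$ carrying $\{\alpha_1,\alpha_2\}$, one copy of $N_{2,1}$ carrying $\{\alpha_{g-1},\alpha_g\}$, and $g-4$ copies of $N_{1,2}$, the one lying between $\beta_g^{k-1}$ and $\beta_g^{k}$ carrying $\alpha_k$; cutting each such piece further along the $\alpha$'s in it produces a single pair of pants. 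Reading off which curves cobound a pair of pants in $N_P$ shows that the neighbours of $\alpha_1$ and of $\alpha_2$ in $\gap$ are $\{\alpha_2,\beta_g^{2}\}$ and $\{\alpha_1,\beta_g^{2}\}$, that the neighbours of $\alpha_k$ for $3\le k\le g-2$ are $\beta_g^{k-1}$ and $\beta_g^{k}$, and that the neighbours of $\alpha_{g-1}$ and of $\alpha_g$ are $\{\alpha_g,\beta_g^{g-2}\}$ and $\{\alpha_{g-1},\beta_g^{g-2}\}$. In particular every $\alpha_i$ has valency two, which is (a). (One could instead fix $i$ and use a cyclically rotated pants decomposition, but this single $P$ handles all $i$ at once.)

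The part I expect to be the most work is (b): for every pair of nonadjacent $\alpha,\beta\in P$ one must exhibit disjoint curves $\delta_\alpha,\delta_\beta\in\kai_{g,0}$, distinct from $\alpha,\beta$, with $i(\delta_\alpha,\alpha)\neq 0$ and $i(\delta_\beta,\beta)\neq 0$, with $\delta_\alpha$ disjoint from $\beta$ and $\delta_\beta$ disjoint from $\alpha$, and with each of $\delta_\alpha,\delta_\beta$ disjoint from $P\setminus\{\alpha,\beta\}$. My strategy for this is uniform. If $\alpha=\alpha_i$ is one of the one-sided curves, the component of $N_{P\setminus\{\alpha_i\}}$ containing $\alpha_i$ is one of the $N_{1,2}$ (or $N_{2,1}$) pieces above, and a suitable curve $\alpha_i^{j}$ — chosen so that its representing arc stays inside that component — meets $\alpha_i$ once and is disjoint from every other curve of $P$; if $\alpha=\beta_g^{k}$ is one of the separating curves, one instead takes a curve $\beta_m^{l}\in\kai_{g,0}$ contained in the union of the two components of $N_{P\setminus\{\beta_g^k\}}$ adjacent to $\beta_g^k$ and crossing it. Finally, because $\alpha$ and $\beta$ are nonadjacent, some separating curve $\beta_g^{k}$ of $P$ has $\alpha$ and $\beta$ in distinct components of its complement; placing $\delta_\alpha$ and $\delta_\beta$ on the two sides of such a $\beta_g^k$ secures the remaining disjointness conditions ($\delta_\alpha$ from $\beta$, $\delta_\beta$ from $\alpha$, and $\delta_\alpha$ from $\delta_\beta$). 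Running this through the finitely many shapes of nonadjacent pairs in $P$ — two $\alpha_i$'s, an $\alpha_i$ and a $\beta_g^k$, two $\beta_g^k$'s — completes the verification of (b) and hence the proof.
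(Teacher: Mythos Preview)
Your argument is correct and follows essentially the same approach as the paper's proof. The paper simply asserts that a top dimensional pants decomposition $P\subset\kai_{g,0}$ containing all $\alpha_i$ and satisfying the nonadjacency condition exists, then runs the same valency count you describe; you supply a specific $P=\{\alpha_1,\dots,\alpha_g,\beta_g^2,\dots,\beta_g^{g-2}\}$ and outline the verification of the nonadjacency condition, which the paper leaves implicit. Note that your item (a) is actually automatic from Lemma~\ref{nonadjforclo} once you know $P$ is top dimensional, so only (b) requires work.
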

\begin{proof}
       Let $P$ be any pants decomposition in $\kai_{g,0}$ containing all $\alpha_i$
       such that $P$ satisfies the nonadjacency condition.  Note that such a pant decomposition $P$ exists
       and is of top dimensional.
       In $\gap$, $\alpha_i$ has valency two so that the number of vertices in $\gap$ 
       nonadjacent to $\alpha_i$ is $|P|-3$.
       Since $P$ satisfies the nonadjacency condition,
       there are at least $|P|-3$ vertices in $\gapf$ nonadjacent to $\phi(\alpha_i)$.
       As there is no vertex of valency one in $\gapf$, the valency of $\phi(\alpha_i)$
       is exactly two. By Lemma~\ref{nonadjforclo}, $\phi(\alpha_i)$ is one-sided and essential.  
\end{proof}

\subsubsection{Surfaces with $n\geq 1$ holes}
Suppose now that $n\geq 1$ and that $N$ denotes the surface $N_{g,n}$.

\begin{lemma}\label{nonadjforn2}
	Let  $P$ be a top dimensional pants decomposition of $N$, 
	$\gap$ be its adjacency graph 
	and let $\ga$ be a vertex of valency one in $\gap$. 
	Then, $\ga$ is either essential one-sided or of type $(0,2)$. 
\end{lemma}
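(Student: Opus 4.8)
The plan is to read off the topological type of a valency-one vertex from the local combinatorics of the pairs of pants of $N_P$ that touch it, after first using Lemma~\ref{curvetype} to cut down the list of possible types. Since $P$ is a top dimensional pants decomposition and $g\geq 1$, Lemma~\ref{curvetype} tells us that $\ga$ is either essential one-sided -- in which case the conclusion already holds -- or it is two-sided, separating, and of type $(p,q)$ for some $0\leq p\leq g$, $1\leq q\leq n-1$. So I would assume the latter. Then $N_\ga$ is the disjoint union of a non-orientable surface $N'$ of genus $p$ with $q+1$ holes and a non-orientable surface $N''$ of genus $g-p$ with $n-q+1$ holes, with $\ga$ occurring as one boundary component of each, and the curves of $P\setminus\{\ga\}$ restrict to a pants decomposition of $N'$ and to one of $N''$.

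Next I would examine the unique pair of pants $Y'$ having $\ga$ on its boundary obtained when $N'$ is cut along the curves of $P\setminus\{\ga\}$ lying in it, and the analogous pair of pants $Y''$ inside $N''$. The neighbours of $\ga$ in $\gap$ are exactly the curves of $P\setminus\{\ga\}$ appearing on $\partial Y'$ or on $\partial Y''$. The crucial observation is that if $N'$ is not itself a three-holed sphere, then, $N'$ being connected, $Y'$ is glued to some other piece of its pants decomposition along a curve of $P\setminus\{\ga\}$ lying on $\partial Y'$, and that curve is a neighbour of $\ga$; similarly for $N''$. Hence, if both $N'$ and $N''$ failed to be three-holed spheres then $\ga$ would have valency at least two, while if both were three-holed spheres it would have valency zero. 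Since $\ga$ has valency one, exactly one of $N', N''$ -- say $N''$ -- is a three-holed sphere.

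Finally, from $N''\cong N_{0,3}$ I would deduce $g-p=0$ and $n-q+1=3$, hence $q=n-2$; thus $\ga$ bounds a disc containing exactly two holes of $N$, i.e. $\ga$ is of type $(0,2)$ (equivalently, of type $(g,n-2)$), which finishes the proof. The step I would be most careful about is the crucial observation above: namely, that a component of $N_\ga$ which is not a single pair of pants genuinely forces a distinct neighbour of $\ga$ in $\gap$. This rests on the connectedness of the adjacency graph of the induced pants decomposition of that component together with the fact that $\ga$ lies on its boundary, so $\ga$ is not one of its interior gluing curves and the neighbour it produces is a curve of $P$ different from $\ga$.
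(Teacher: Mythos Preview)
Your argument is correct and follows essentially the same route as the paper's proof: invoke Lemma~\ref{curvetype} to reduce to the separating case, then observe that each component of $N_\gamma$ which is not itself a pair of pants contributes a distinct neighbour of $\gamma$ in $\gap$, forcing one side to be a three-holed sphere and hence $\gamma$ to be of type $(0,2)$. The only difference is cosmetic---the paper phrases this by contradiction (assume $\gamma$ is neither essential one-sided nor of type $(0,2)$ and obtain valency $\geq 2$), whereas you argue directly.
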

\begin{proof}
	Assume that $\ga$ is neither essential one-sided 
	nor of type $(0,2)$.  Since 
	$\ga$ is contained in the top dimensional pants decomposition $P$, 
	by Lemma~\ref{curvetype}, $\ga$ is separating and 
	the surface $N_\ga$ obtained by cutting $N$ along $\ga$ 
	is the disjoint union of two subsurfaces of $N$, none of which is a pair of pants. 
	On each of these components, there is a vertex of $P$ adjacent to $\ga$, 
	so that the valency of $\ga$ in $\gap$ is at least two. 
	
	By this contradiction, $\ga$ is 
	either essential one-sided or of type $(0,2)$.	
\end{proof}

\begin{corollary}\label{cor:linpath}
	Let $P$ be a top dimensional pants decomposition of $N$
	and $\gamma_0\in P$ have valency one in $\gap$. Suppose that a vertex $\gamma_k\in P$ 
	is a $k$th linear successor of $\gamma_0$.  If $\gamma_0$ is one-sided, then
	$\gamma_k$ is of type $(1,k)$, and 
	if $\gamma_0$ is of type $(0,2)$, then
	$\gamma_k$ is of type $(0,k+2)$. 
\end{corollary}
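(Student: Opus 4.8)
The plan is to prove Corollary~\ref{cor:linpath} by induction on $k$, tracking the topological type of each successive linear successor. First I would set up the induction: the base case $k=0$ is exactly the hypothesis, and for $k=1$ one examines the unique pair of pants $Y$ in $N_{P}$ having $\gamma_0$ as a boundary component. Since $\gamma_0$ has valency one in $\gap$, exactly one boundary component of $Y$ is a curve in $P$ other than $\gamma_0$ — namely $\gamma_1$ — and the remaining boundary component is a hole of $N$. Gluing $Y$ to $\gamma_0$: if $\gamma_0$ is one-sided, then $\gamma_0$ bounds a M\"obius band (a copy of $N_{1,1}$), so $\gamma_1$ bounds $N_{1,1}$ glued to a pair of pants with one extra hole, i.e.\ $N_{1,2}$, which makes $\gamma_1$ of type $(1,1)$ (noting type $(1,k)$ means one side is $N_{1,k+1}$). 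If $\gamma_0$ is of type $(0,2)$, it bounds a disc with two holes, so $\gamma_1$ bounds a disc with three holes, i.e.\ type $(0,3)$. This is the content of the $k=1$ step.

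Next I would run the inductive step. Suppose $\gamma_{k-1}$ is a $(k-1)$th linear successor with the asserted type, and $\gamma_k$ is a $k$th linear successor via the path $\gamma_0,\gamma_1,\ldots,\gamma_k$, where $\gamma_1,\ldots,\gamma_{k-1}$ all have valency two in $\gap$. Because $P$ is contained in a top dimensional pants decomposition, $\gamma_{k-1}$ is of type $(1,k-1)$ or $(0,k+1)$, hence separating, and $N_{\gamma_{k-1}}$ splits into two subsurfaces, one of which is the ``small'' side $N_{1,k}$ (resp.\ a disc with $k+1$ holes). The curve $\gamma_{k-2}$ lies on the small side (by the inductive construction of the path), and since $\gamma_{k-1}$ has valency two, its second neighbour $\gamma_k$ lies on the other side. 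I would then look at the pair of pants $Y'$ in $N_P$ having both $\gamma_{k-1}$ and $\gamma_k$ as boundary components; its third boundary component is a hole of $N$ (otherwise $\gamma_{k-1}$ would have valency $\geq 3$, contradicting valency two — here one uses that the path structure forces the third curve, were it in $P$, to be a new neighbour of $\gamma_{k-1}$). Cutting out $Y'$ from the small-side subsurface bounded by $\gamma_{k-1}$ and re-gluing across $\gamma_k$, the curve $\gamma_k$ bounds the small side of $\gamma_{k-1}$ together with one additional hole: $N_{1,k}$ becomes $N_{1,k+1}$ in the one-sided case (type $(1,k)$), and a disc with $k+1$ holes becomes a disc with $k+2$ holes in the other case (type $(0,k+2)$). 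This closes the induction.

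The main obstacle I anticipate is the bookkeeping about \emph{which} side of the separating curve $\gamma_{k-1}$ the vertex $\gamma_k$ lies on, and making sure the ``third boundary component of $Y'$ is a genuine hole of $N$'' claim is airtight. The subtlety is that valency two for $\gamma_{k-1}$ in $\gap$ tells us it is adjacent to exactly two curves of $P$; one of these is $\gamma_{k-2}$ (on the small side), so the other, $\gamma_k$, is forced onto the big side, and within the pair of pants $Y'$ there is no room for a third curve of $P$ without raising the valency. I would lean on Lemma~\ref{curvetype} (to pin down that every curve in a top dimensional pants decomposition is essential one-sided or of type $(p,q)$) and Lemma~\ref{nonadjforn2} for the base of the path, and on the Euler-characteristic/pants-counting already used in the proof of Lemma~\ref{curvetype} to control the genus additivity when re-gluing. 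Once the side-tracking is handled, the genus and hole counts are a routine additivity computation, so I would state them briefly rather than belabor them.
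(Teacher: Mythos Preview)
Your proposal is correct and follows essentially the same approach as the paper: both arguments proceed by observing that for each $i$ the curves $\gamma_{i-1}$ and $\gamma_i$, together with a hole of $N$, bound a pair of pants, so the small side gains exactly one hole at each step. The paper compresses this into a single sentence, while you spell out the induction and the side-tracking explicitly; the only minor looseness is your claim that the third boundary of $Y'$ being a curve of $P$ would force valency $\geq 3$ for $\gamma_{k-1}$ --- you should also rule out that this third boundary is a second copy of $\gamma_k$, which follows since $\gamma_k$ is either one-sided (hence contributes a single boundary circle) or separating (hence its two copies lie on opposite sides), by Lemma~\ref{curvetype}.
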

 \begin{proof} 
  Let $\gamma_0,\gamma_1,\ldots,\gamma_k$ be 
 a linear path in $\gap$ from $\gamma_0$ to $\gamma_k$ such that each $\gamma_i$
 is an $i$th linear successor of $\gamma_0$. 
 The proof then follows from the fact that for $i=1,2,\ldots, k$, 
 the curves $\gamma_{i-1}$ and $\gamma_i$ bound a pair of pants.  
\end{proof}

\begin{lemma}\label{linpath}
	Let $P\subset\kaign$ be a top dimensional pants decomposition
	satisfying the nonadjacency condition and let $\gamma_0\in P$ have valency one
	in $\gap$. Suppose that 
	$\ga_k$ is a $k$th linear successor of $\ga_0$ in $\gap$.
	 Then $\phi(\ga_0)$ has valency one in the adjacency graph $\gapf$  of $\phi(P)$ and 
	 $\phi(\ga_k)$ is a $k$th linear successor of $\phi(\ga_0)$ in $\gapf$.
\end{lemma}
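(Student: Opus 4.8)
The plan is to argue by combining the preservation of linearity of the relevant portion of $\gap$ (via the nonadjacency condition and Lemma~\ref{nonadjacency}) with a counting argument on valencies. First I would recall that since $P$ is top dimensional and $\phi$ is locally injective, $\phi(P)$ is also a top dimensional pants decomposition, so its adjacency graph $\gapf$ has $2g+n-3$ vertices; moreover, since $\phi$ is simplicial and injective on the star of each vertex, $\phi$ maps $P$ bijectively onto $\phi(P)$. The key structural input is that $\phi$ preserves nonadjacency with respect to $P$ (Lemma~\ref{nonadjacency}), since $P$ satisfies the nonadjacency condition. Thus if $\alpha,\beta$ are nonadjacent in $\gap$ then $\phi(\alpha),\phi(\beta)$ are nonadjacent in $\gapf$; equivalently, the number of vertices nonadjacent to $\phi(\gamma)$ in $\gapf$ is at least the number of vertices nonadjacent to $\gamma$ in $\gap$, so the valency of $\phi(\gamma)$ in $\gapf$ is at most the valency of $\gamma$ in $\gap$.

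Next I would exploit the fact that a top dimensional pants decomposition has no isolated vertices in its adjacency graph (it is connected), and that by Lemma~\ref{nonadjforn2} (or Lemma~\ref{nonadjforclo} in the closed case) the only vertices of valency one are the ``ends'' of linear segments. Consider the linear path $\gamma_0,\gamma_1,\ldots,\gamma_k$ in $\gap$, where $\gamma_0$ has valency one and each $\gamma_i$ with $1\leq i\leq k-1$ has valency two. For the endpoint: $\gamma_0$ has valency $1$, so by the valency inequality $\phi(\gamma_0)$ has valency at most $1$; since $\gapf$ is connected with more than one vertex, $\phi(\gamma_0)$ has valency exactly $1$. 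For an interior vertex $\gamma_i$ (with $1\leq i\leq k-1$): it has valency $2$, so $\phi(\gamma_i)$ has valency at most $2$; it cannot have valency $1$, for then by Lemma~\ref{nonadjforn2}/\ref{nonadjforclo} it would be one-sided or of type $(0,2)$, but more directly, I would count: the total valency count must be consistent, and since the two neighbors $\gamma_{i-1}$ and $\gamma_{i+1}$ of $\gamma_i$ are nonadjacent to each other (linearity of the path forces no triangle on $\{\gamma_{i-1},\gamma_i,\gamma_{i+1}\}$), and $\gamma_i$ is adjacent to both, the valency of $\phi(\gamma_i)$ must accommodate two images that are nonadjacent to each other but — wait, that is not quite forced. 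The cleaner route: if $\phi(\gamma_i)$ had valency $1$, then in $\gapf$ it would be an endpoint; but $\gamma_i$ is nonadjacent to exactly $|P|-3$ vertices of $\gap$, so $\phi(\gamma_i)$ is nonadjacent to at least $|P|-3$ vertices of $\gapf$, forcing valency $\leq 2$; and valency $=1$ would mean nonadjacent to $|P|-2$ vertices. To rule out valency $1$, I would observe that $\gamma_i$ is adjacent to $\gamma_{i-1}$ which is itself adjacent to $\gamma_{i-2}$ (or to $\gamma_0$'s other neighbor), and track that $\phi$ being a simplicial embedding on stars preserves ``being a common neighbor'': since $\gamma_{i-1}$ and $\gamma_{i+1}$ are both adjacent to $\gamma_i$ but are distinct and nonadjacent to each other, their images $\phi(\gamma_{i-1}),\phi(\gamma_{i+1})$ are distinct neighbors of $\phi(\gamma_i)$, giving valency $\geq 2$, hence exactly $2$.

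Finally, with valencies in hand, I would conclude that $\phi(\gamma_0),\phi(\gamma_1),\ldots,\phi(\gamma_k)$ is a path in $\gapf$ in which $\phi(\gamma_0)$ has valency $1$ and each $\phi(\gamma_i)$, $1\leq i\leq k-1$, has valency $2$: adjacency of consecutive terms is immediate from the simplicial map preserving adjacency (adjacent curves stay disjoint and share a pair of pants — actually adjacency must be checked: $\phi$ preserves \emph{disjointness} automatically, and since $\phi(P)$ is a pants decomposition, being disjoint curves of $\phi(P)$ that are... here I would use that $\gamma_{i-1},\gamma_i$ bound a pair of pants, and invoke that $\phi$ maps the pants decomposition $P$ to $\phi(P)$ with $\phi(\gamma_{i-1}),\phi(\gamma_i)$ disjoint; adjacency in $\phi(P)$ then follows from the valency/path structure just established, since $\phi(\gamma_i)$ has only $2$ neighbors and they must be $\phi(\gamma_{i-1})$ and $\phi(\gamma_{i+1})$). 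This exhibits $\phi(\gamma_k)$ as a $k$th linear successor of $\phi(\gamma_0)$ in $\gapf$, completing the proof.

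The main obstacle I anticipate is the delicate bookkeeping needed to rule out valency $1$ for the interior vertices $\phi(\gamma_i)$ and, relatedly, to show that $\phi$ actually sends the \emph{edges} of the linear path to edges (i.e. preserves adjacency, not merely disjointness, along the path). Disjointness is automatic from $\phi$ being simplicial, but adjacency with respect to a pants decomposition is a finer notion; the resolution is that once all the valencies along the image path are pinned down to $1,2,2,\ldots,2$, the graph-theoretic structure of $\gapf$ near $\phi(\gamma_0)$ is forced to be a linear segment of length $\geq k$, and the images $\phi(\gamma_0),\ldots,\phi(\gamma_k)$, being distinct and pairwise-consistent with that segment, must lie along it in order.
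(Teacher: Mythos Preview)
Your framework is right and matches the paper's: bound valencies from above via nonadjacency preservation (Lemma~\ref{nonadjacency}), then use connectedness of $\gapf$ to bound from below. You also correctly identify the crux, namely ruling out valency one for the interior vertices $\phi(\gamma_i)$. But your proposed resolution of that step is circular. You assert that $\phi(\gamma_{i-1})$ and $\phi(\gamma_{i+1})$ are ``distinct neighbors of $\phi(\gamma_i)$'' to get valency $\geq 2$; however, being a \emph{neighbor in $\gapf$} means being adjacent with respect to the pants decomposition $\phi(P)$, and that is exactly what has not yet been shown---simpliciality of $\phi$ gives only disjointness, not adjacency. Your subsequent remark that ``adjacency in $\phi(P)$ then follows from the valency/path structure just established'' closes the circle: the valency computation already assumed the adjacency.

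The fix (which is what the paper does, though rather tersely) is to run the argument \emph{inductively along the path}, so that connectedness does the work at each step. Since the only possible neighbor of $\phi(\gamma_0)$ is $\phi(\gamma_1)$ and $\gapf$ is connected with more than one vertex, $\phi(\gamma_0)$ has valency exactly one and its neighbor is $\phi(\gamma_1)$. Now assume inductively that $\phi(\gamma_{i-1})$ is adjacent to $\phi(\gamma_i)$ in $\gapf$, for some $1\leq i\leq k-1$. The only possible neighbors of $\phi(\gamma_i)$ are $\phi(\gamma_{i-1})$ and $\phi(\gamma_{i+1})$, so the valency of $\phi(\gamma_i)$ is one or two. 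If it were one, then no edge of $\gapf$ leaves the set $\{\phi(\gamma_0),\ldots,\phi(\gamma_i)\}$, which has $i+1\leq k<|P|$ elements; this contradicts connectedness of $\gapf$. Hence $\phi(\gamma_i)$ has valency two with neighbors $\phi(\gamma_{i-1})$ and $\phi(\gamma_{i+1})$, and the induction proceeds. This simultaneously establishes the valencies and the adjacencies along the image path, so $\phi(\gamma_k)$ is a $k$th linear successor of $\phi(\gamma_0)$.
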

\begin{proof}
       Let $\gamma_0,\gamma_1,\ldots,\gamma_k$ be a linear path such that each $\gamma_i$
       is an $i$th linear successor of $\gamma_0$.
	Since  $\ga_1$ is the unique vertex  in $\gap$ adjacent to $\ga_0$ and since 
	$\phi$  preserves nonadjacency in $\gap$,  $\phi(\ga_1)$ is the unique vertex
	in $\gapf$ adjacent to $\phi(\ga_o)$.  Hence, $\phi(\ga_0)$ has valency one in $\gapf$. 
	
	The vertex $\ga_i$ is adjacent to $\ga_{i-1}$ and $\ga_{i+1}$ for 
	$i=1,2,\ldots,k-1$. Since $\phi$ preserves nonadjacency in $\gap$, 
	the valency of $\phi(\ga_i)$ is either one or two in $\gapf$. On the other hand 
	since $\gapf$ is connected, $\phi(\ga_i)$ has valency two in $\gapf$. 
	Hence,  the vertex $\phi(\ga_i)$ must be adjacent to 
	the vertices $\phi(\ga_{i-1})$ and $\phi(\ga_{i+1})$. Thus, 
	$\phi(\ga_0),\phi(\ga_1),\ldots,\phi(\ga_k)$ is a linear path, so that 
	$\phi(\ga_k)$ is a $k$th linear successor of $\phi(\ga_0)$.	
\end{proof}

\begin{lemma}\label{g3ct} 
	The curve $\phi(\al_g)$ is one-sided and essential. 
\end{lemma}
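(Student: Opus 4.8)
The plan is to realise $\al_g$ as a valency-one vertex of a well-chosen top dimensional pants decomposition whose adjacency graph carries a linear segment of length $n-1$ emanating from $\al_g$, and then to play the option ``$\phi(\al_g)$ is of type $(0,2)$'' against the length of that segment, which becomes impossible once $g\ge 2$. The ingredients are Lemma~\ref{linpath}, Lemma~\ref{nonadjforn2} and Corollary~\ref{cor:linpath}.

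First I would fix a top dimensional pants decomposition $P\subset\kaign$ with $\al_g\in P$, satisfying the nonadjacency condition, in which $\al_g$ has valency one in $\gap$ and admits an $(n-1)$th linear successor $\ga_{n-1}\in P$. Such a $P$ is built by cutting $\Ngn$ along $\al_g$, which produces $\Ngm$ with one new boundary component $\partial_0$ and, by the isomorphism of the excerpt, identifies $\link_{\kaign}(\al_g)$ with $\kai_{g-1,n+1}$: it then suffices to choose in $\kai_{g-1,n+1}$ a top dimensional pants decomposition $Q$ of $\Ngm$ which contains a chain $\delta_1,\delta_2,\dots,\delta_{n-1}$ of separating curves of types $(0,2),(0,3),\dots,(0,n)$ running ``outward'' from $\partial_0$ — so that $\partial_0$ together with one hole of $\Ngn$ form the other two boundary components of the pair of pants bounded by $\delta_1$, and $\delta_i,\delta_{i+1}$ together with one hole of $\Ngn$ bound a pair of pants for each $i$ — and then to set $P=\{\al_g\}\cup Q$, read back in $\Ngn$. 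In $\gap$ the curve $\al_g$ is then adjacent only to $\delta_1$, the curves $\delta_1,\dots,\delta_{n-2}$ have valency two, and $\delta_{n-1}$ is an $(n-1)$th linear successor of $\al_g$. By Corollary~\ref{toppants} one has $|P|=2g+n-3$, which is $\ge n$ precisely because $g\ge 2$, so there is exactly enough room for the chain; this is where the hypothesis $g\ge 2$ is used, and is why the argument is not available in genus one. Checking that $P$ lies in $\kaign$ and satisfies the nonadjacency condition amounts to an explicit choice of the $\be_i^j$'s realising $\delta_1,\dots,\delta_{n-1}$ together with companion curves $\al_i^j$, done as in the genus-one and closed cases above.

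Now Lemma~\ref{linpath}, applied with $\al_g$ in the role of $\ga_0$, shows that $\phi(\al_g)$ has valency one in $\gapf$ and that $\phi(\ga_{n-1})$ is an $(n-1)$th linear successor of $\phi(\al_g)$. Since $\phi(P)$ is again a top dimensional pants decomposition, Lemma~\ref{nonadjforn2} forces $\phi(\al_g)$ to be essential one-sided or of type $(0,2)$. Assume the latter; then Corollary~\ref{cor:linpath}, applied to $\phi(P)$, to the valency-one vertex $\phi(\al_g)$ and to its $(n-1)$th linear successor $\phi(\ga_{n-1})$, yields that $\phi(\ga_{n-1})$ has type $(0,n+1)$. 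But a curve of type $(0,n+1)$ bounds a disc containing $n+1$ of the holes of $\Ngn$, while $\Ngn$ has only $n$ holes — a contradiction. (For $n=1$ this reasoning degenerates to the remark that $\Ngn$ carries no curve of type $(0,2)$ at all.) Hence $\phi(\al_g)$ is essential one-sided, and since a one-sided curve lying in a top dimensional pants decomposition is automatically essential, the lemma follows.

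The real difficulty sits entirely in the first step: producing an explicit $P\subset\kaign$ that at once places $\al_g$ at a valency-one vertex, supplies the chain of at least $n-1$ further curves emanating from it, and satisfies the nonadjacency condition. The count $2g+n-3\ge n$ is tight when $g=2$ (there $P$ is forced to be an essentially linear pants decomposition), so that is the case requiring the most care; one obtains it by transplanting a linear pants decomposition of the cut surface $\Ngm$ across the cut along $\al_g$ and verifying the nonadjacency condition curve by curve, exactly as in the treatment of genus one.
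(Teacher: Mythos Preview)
Your argument coincides with the paper's: exhibit $\al_g$ as a valency-one vertex of a top dimensional $P\subset\kaign$ satisfying the nonadjacency condition and carrying a linear tail, invoke Lemma~\ref{linpath} and Lemma~\ref{nonadjforn2} to reduce to the dichotomy ``essential one-sided versus type $(0,2)$'', and eliminate the $(0,2)$ alternative via Corollary~\ref{cor:linpath} by running the chain past the available number of holes. The only cosmetic difference is that the paper writes down the explicit choice $P=\{\al_1,\dots,\al_g,\be_{g-1}^{g+1},\dots,\be_{g-1}^{g+n},\be_{g-1}^{1},\dots,\be_{g-1}^{g-3}\}$ and uses the $n$th linear successor $\be_{g-1}^{g+n}$ to reach the impossible type $(0,n+2)$, whereas you stop one step earlier at $(0,n+1)$---which has the incidental merit of being uniformly available for $g\ge 2$, since for $g=2$ the curve $\be_{g-1}^{g+n}=\be_1^{\,n+2}$ bounds a M\"obius band and so drops out of the picture.
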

\begin{proof}
 For the sake of notational simplicity, let us denote the curve $\beta_{g-1}^k$ by $\beta^k$.
Consider the top dimensional pants decomposition
  \[
    P= \{\al_1,\al_2,\ldots,\al_{g-1},\al_g,\be^{g+1},\be^{g+2},
   \ldots,\be^{g+n},\be^1,\be^2,\ldots,\be^{g-3}\} 
  \]
  in $\kai_{g,n}$,
 so that the vertex $\be^{g+1}$ is the unique vertex connected to $\al_g$ by an edge  
 in the adjacency graph $\gap$ (cf. Figure~\ref{cokgen}). 
	
	Note that
	$\al_g,\be^{g+1}, \be^{g+2}, \ldots,\be^{g+n}$ is 
	a linear path in $\gap$ with $n+1$ vertices such that for $1\leq k\leq n$ the curve 
	$\beta^{g+k}$  is a $k$th linear successor of $\alpha_g$. 
	It is easy to see that the pants decomposition $P$ satisfies the nonadjacency condition.
    \begin{figure}[hbt!]
	\labellist
	\small\hair 2pt
 		\pinlabel $\alpha_1$ at 135 495
 		\pinlabel $\alpha_{g-2}$ at 65 423
 		\pinlabel $\alpha_{g-1}$ at 67 347
 		\pinlabel $\alpha_{g}$ at 133 284
		\pinlabel $\beta^{1}$ at 110 480
		\pinlabel $\beta^{g-3}$ at 82 453
  		\pinlabel $\beta^{g+n}$ at 190 502
 		\pinlabel $\beta^{g+n-1}$ at 250 483
		\pinlabel $\beta^{g+4}$ at 272 455
		\pinlabel $\beta^{g+3}$ at 290 410
		\pinlabel $\beta^{g+2}$ at 286 355
		\pinlabel $\beta^{g+1}$ at 255 305
		\pinlabel $\alpha_{g}$ at 450 494
		\pinlabel $\beta^{g+1}$ at 455 475
		\pinlabel $\beta^{g+2}$ at 455 455
		\pinlabel $\beta^{g+3}$ at 455 433
		\pinlabel $\beta^{g+4}$ at 455 411
		\pinlabel $\beta^{g+n}$ at 455 390
		\pinlabel $\beta^{1}$ at 449 368
		\pinlabel $\beta^{2}$ at 449 346
		\pinlabel $\beta^{3}$ at 449 324
		\pinlabel $\beta^{g-3}$ at 455 304
	 	\pinlabel $\alpha_{g-1}$ at 455 282
		\pinlabel $\alpha_{1}$ at 396 378
		\pinlabel $\alpha_{2}$ at 396 356
		\pinlabel $\alpha_{3}$ at 396 334
		\pinlabel $\alpha_{g-2}$ at 390 292
	\endlabellist
		\centering
		\includegraphics[width=0.7\textwidth]{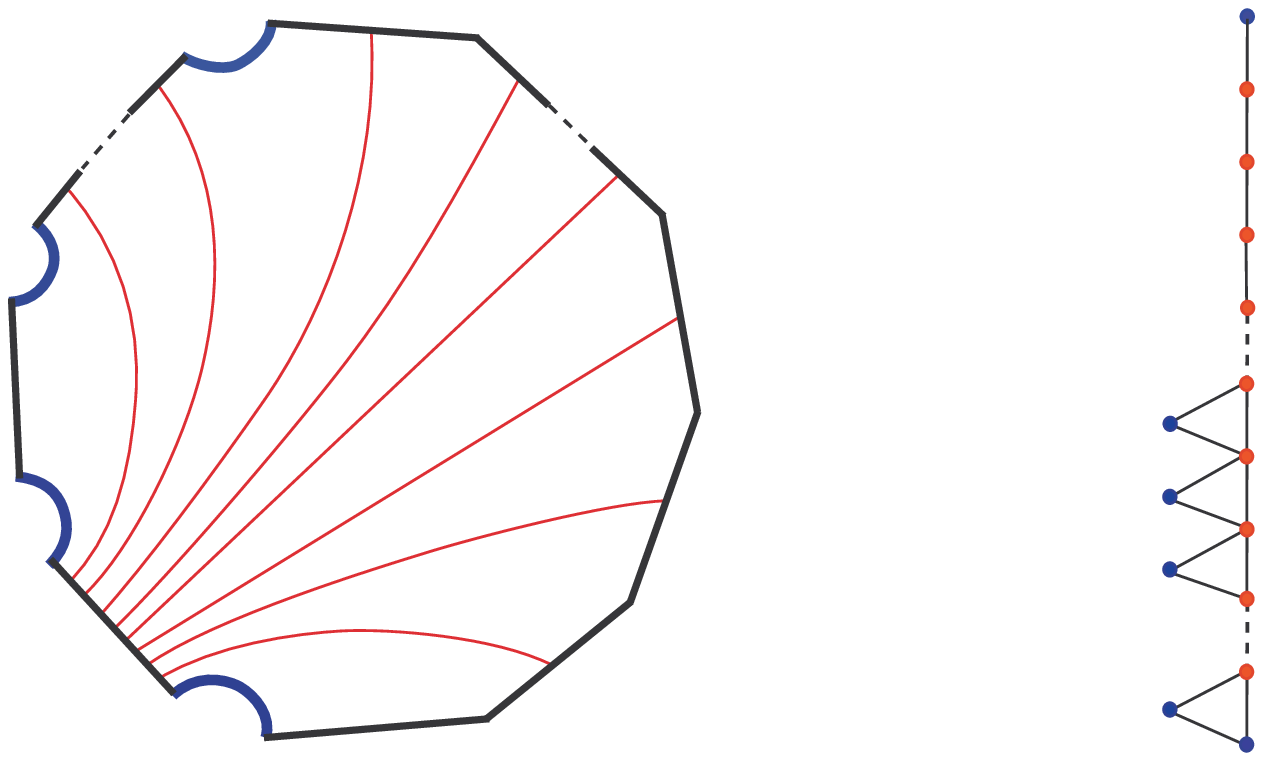}
		\caption{The pants decomposition $P$ containing $\al_1$, and 
			its adjacency graph.}\label{cokgen}
	\end{figure}
	Hence, the map $\phi$ preserves the nonadjacency in $\gap$.
	Since $\al_g$ has valency one in $\gap$,  
	so is $\phi(\al_g)$ in $\gapf$. By Lemma~\ref{nonadjforn2}, $\phi(\al_g)$
	is either essential one-sided or of type $(0,2)$.
	
	Suppose that $\phi(\al_g)$ is of type $(0,2)$. 
	By Lemma~\ref{linpath}, the vertex $\phi(\beta^{g+k})$ 
	is a $k$th linear successor of $\phi(\al_g)$ in the graph $\gapf$. Since 
	$\phi(\al_g)$ is of type $(0,2)$,  the vertex $\phi(\be^{g+n})$ is of type $(0,n+2)$. 
	But there are no simple closed curve of type $(0,n+2)$ on $N$. 
	By this contradiction,  $\phi(\al_g)$ must be essential one-sided.
\end{proof}

We finish this section with the following corollary.

\begin{corollary}\label{all-alpha_i}
	Let $g+n\geq 5$. All of the curves $\phi(\al_1),\phi(\al_2),\ldots,\phi(\al_g)$ are essential one-sided.
\end{corollary} 
\begin{proof}
       The case $g=1$ is proved in Proposition~\ref{g1ct} . So suppose that $g\geq 2$. 
       By Lemma~\ref{g3ct}, $\phi(\alpha_g)$ is essential one-sided. Let $h:N\to N$ be a homeomorphism  with 
       $h(\phi (\alpha_g))=\alpha_g$. Now $\bar{\phi}=h\phi$ restricts to a locally injective simplicial map
       $\link_{\kai_{g,n}} (\alpha_g) \to \link_{\CC(N)} (\alpha_g) $. 
              
       	The surface $N_{\al_g}$ obtained by cutting $N$ along $\al_g$ 
	is a non-orientable surface of 	genus $g-1$ with $n+1$ holes. 
	 Consider the injective simplicial map
	 $\varphi : \link_{\CC(N)} (\alpha_g) \to \CC(N_{\alpha_g})$ induced by cutting
	 $N$ along $\alpha_g$. We then have the following commutative diagram:
	 \begin{equation*}
	\begin{split}
	\xymatrixcolsep{4pc} \xymatrix @-0.1pc
	{
		\link _{\kaign}(\alpha_g)\ar[r]^-{\bar{\phi}}  \ar[d]^-{\varphi} & \link_{\CC(N_{g,n})}(\alpha_g) \ar[d]^-{\varphi}\\
		\kai_{g-1,n+1}\ar[r]^-{\varphi\bar{\phi}\varphi^{-1}} & \CC(N_{\alpha_g}).&
	}
	\end{split}
	\end{equation*}	 
	 The map $\varphi\bar{\phi}\varphi^{-1}$ is locally injective simplicial. By Lemma~\ref{g3ct}, 
	 $\varphi\bar{\phi}\varphi^{-1}$ maps the essential one-sided curve  $\varphi(\alpha_{g-1})$
	 to an essential one-sided curve in $N_{\al}$.
	 It follows that $\bar{\phi}(\alpha_{g-1})$, and hence $\phi(\alpha_{g-1})$, is one-sided and essential.
	  
	  By continuing in this way, one concludes that $\phi(\alpha_k)$ is one sided essential for all
	  $1\leq k \leq g$.
	 \end{proof}

\subsubsection{Curves of type $(2,0)$}
 In the proof of the main theorem, we also need that the map $\phi$ preserves the topological types 
 of the curves of type $(2,0)$ in $\kai_{g,n}$. In the next lemma, we take, as usual, the subscripts and superscripts modulo
$g+n$.

 \begin{lemma} \label{lemma(2,0)}
      Let $g\geq 2$, $n\geq 1$ and $1\leq i\leq g-1$.  If $\beta=\beta_{i-1}^{i+1}\in \kai_{g,n}$
      so that $\beta$ is of type $(2,0)$, then $\phi(\beta)$ is of type $(2,0)$.
 \end{lemma}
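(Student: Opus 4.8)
The plan is to exhibit an explicit top dimensional pants decomposition $P\subset\kai_{g,n}$ containing $\beta=\beta_{i-1}^{i+1}$ together with all of the one-sided curves $\alpha_1,\dots,\alpha_g$, arranged so that $\beta$ sits in a combinatorially recognizable position in the adjacency graph $\gap$, and then to argue that $\phi$ preserves enough of this combinatorial structure to force $\phi(\beta)$ to be of type $(2,0)$. First I would note that $\beta_{i-1}^{i+1}$ separates off a subsurface containing exactly the two arcs $s_i$ representing $\alpha_i$ (and, after the identifications of the model, yielding a genus-$2$ piece with one boundary component) from the rest, so $\beta$ is indeed of type $(2,0)$; the key feature I want to record is that on one side of $\beta$ lie the two essential one-sided curves $\alpha_{i-1}$, ... wait --- more precisely the side of $\beta$ that is $N_{2,1}$ contains curves $\alpha_i$ together with one more essential one-sided curve living in that genus-$2$ piece. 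So in $\gap$, the vertex $\beta$ should be adjacent to curves on both sides, and the ``small'' side $N_{2,1}$ admits a sub-pants-decomposition in which $\beta$ is adjacent to a configuration that is not linear (a genus-$2$ piece with one hole needs $3$ curves to cut into pants, one of which is $\beta$ itself).

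The core of the argument will be to use Corollary~\ref{all-alpha_i}, which already tells us $\phi(\alpha_1),\dots,\phi(\alpha_g)$ are all essential one-sided. I would choose $P$ to contain $\alpha_1,\dots,\alpha_g$ and $\beta$, with $P$ satisfying the nonadjacency condition, so that by Lemma~\ref{nonadjacency} the map $\phi$ preserves nonadjacency in $\gap$, hence $\phi(P)$ is a top dimensional pants decomposition (by the Remark after locally injective maps) with $\phi(\beta)$ occupying the same adjacency pattern as $\beta$. By Lemma~\ref{curvetype}, $\phi(\beta)$ is either essential one-sided or of type $(p,q)$; since $\phi(\beta)$ is disjoint from and distinct from all the $\phi(\alpha_k)$ and these are already $g$ essential one-sided curves in the top dimensional decomposition $\phi(P)$ (which by Corollary~\ref{toppants} contains \emph{exactly} $g$ essential one-sided curves), $\phi(\beta)$ cannot be one-sided --- so $\phi(\beta)$ is separating of some type $(p,q)$. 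It then remains to pin down $(p,q)=(2,0)$. For this I would count: the number of vertices of $\gap$ on each side of $\beta$, together with their valencies, is a combinatorial invariant preserved by $\phi$ (nonadjacency-preservation lets one reconstruct the two ``sides'' as the two components of $P\setminus\{\beta\}$ whose curves are pairwise separated by $\beta$), and the small side carries exactly two of the $\alpha_k$'s worth of genus. Concretely, $\beta$ of type $(2,0)$ has $N_\beta$ a disjoint union of $N_{2,1}$ and $N_{g-2,n+1}$; the $N_{2,1}$ side contains exactly two essential one-sided curves of $P$ and no others, while any other separating type would distribute the one-sided curves of $P$ differently between the two sides. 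Since $\phi$ preserves the partition of the one-sided curves into the two sides and preserves which curves are one-sided (Corollary~\ref{all-alpha_i}), $\phi(\beta)$ must have exactly two essential one-sided curves on one side, with $q$ forced to be $0$ by matching the count of boundary components adjacent along that side (no hole of $N$ is cut off). This forces $\phi(\beta)$ to be of type $(2,0)$.

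The main obstacle I anticipate is the bookkeeping needed to show that the combinatorial data recoverable from $\phi(P)$ --- namely the bipartition of $P\setminus\{\beta\}$ induced by ``$\beta$ separates these two curves'' and the count of one-sided curves and boundary-adjacencies on each side --- genuinely determines the type $(p,q)$, and in particular that $q=0$ rather than some $q\geq 1$. The subtlety is that the nonadjacency condition and Lemma~\ref{nonadjacency} only directly give nonadjacency-preservation, not a full reconstruction of the topology of each complementary piece; so I would need to choose $P$ carefully (for instance so that the $N_{2,1}$ side's sub-decomposition is forced to look a particular way, perhaps linear on the large side and with a recognizable triangle or valency pattern on the small side) and possibly invoke Lemma~\ref{linpath} and Corollary~\ref{cor:linpath} on the large linear tail $\be^{g+1},\dots,\be^{g+n}$ to control the hole count there, thereby forcing the remaining holes and genus onto the small side and nailing down $(p,q)=(2,0)$. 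A cleaner alternative, which I would try first, is to reduce to the already-handled case by cutting along one of the $\alpha_k$ on the small side: cutting $N$ along an appropriate $\alpha_j$ with $\alpha_j$ on the $N_{2,1}$ side of $\beta$ turns $\beta$ into a curve of type $(1,0)$ in $N_{g-1,n+1}$, and iterating (using the commutative-diagram / induction setup from the proof of Corollary~\ref{all-alpha_i}) reduces the claim to a lower-genus statement where the type of a $(2,0)$-curve can be checked directly against the valency classification in Lemmas~\ref{nonadjforclo}, \ref{nonadjforn2}.
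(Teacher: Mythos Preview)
Your setup is exactly the paper's: the same pants decomposition
\[
P=\{\alpha_1,\ldots,\alpha_g,\ \beta_{i-1}^{i+1},\beta_{i-1}^{i+2},\ldots,\beta_{i-1}^{i-3}\}
\]
containing all $\alpha_k$ and $\beta=\beta_{i-1}^{i+1}$, the nonadjacency condition, and the observation (via Corollary~\ref{toppants} and Corollary~\ref{all-alpha_i}) that the $g$ one-sided slots in $\phi(P)$ are filled by the $\phi(\alpha_k)$, so $\phi(\beta)$ is two-sided. Up to here you and the paper agree.

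Where you diverge is in pinning down the type. Your main line --- reconstructing the bipartition of $P\setminus\{\beta\}$ into the two sides of $\beta$ and then counting one-sided curves and holes on each side --- is the obstacle you yourself flag, and it is a genuine gap as written: Lemma~\ref{nonadjacency} gives you only that nonadjacent pairs stay nonadjacent, which does not by itself let you recover which curves of $\phi(P)$ lie on which side of $\phi(\beta)$, nor which holes of $N$ are trapped there. One could try to push this through by showing enough adjacencies are also preserved to reconnect each side, but that is considerably more work than needed.

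The paper instead takes the local route you mention only in passing (``a recognizable triangle''). In $\gap$ the vertices $\alpha_i,\alpha_{i+1},\beta$ form a triangle, and each of $\alpha_i,\alpha_{i+1}$ is adjacent \emph{only} to the other two vertices of this triangle. Nonadjacency preservation therefore forces each of $\phi(\alpha_i),\phi(\alpha_{i+1})$ to have all of its $\gapf$-neighbors inside $\{\phi(\alpha_i),\phi(\alpha_{i+1}),\phi(\beta)\}$. A brief case analysis on the induced subgraph, using two facts --- (a) a one-sided curve lies in exactly one pair of pants, so if two one-sided curves are adjacent then any neighbor of one is a neighbor of the other; and (b) $g+n\ge 5$ rules out the degenerate ``path'' configuration, which would force $N\cong N_{2,2}$ --- shows that $\phi(\alpha_i),\phi(\alpha_{i+1}),\phi(\beta)$ again form a triangle in $\gapf$. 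But a pair of pants whose three boundary curves are two essential one-sided curves and one two-sided curve, after regluing the one-sided curves, is a one-holed Klein bottle with boundary $\phi(\beta)$. Hence $\phi(\beta)$ is of type $(2,0)$ directly, with no need to control the large side at all. This is the argument to aim for; your global counting and your inductive ``cut along $\alpha_j$'' alternative are both avoidable.
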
 
 \begin{proof}
	Consider the top dimensional pants  decomposition
	\[
	  P=\{\al_1,\al_2,\ldots,\al_g,
	  \be_{i-1}^{i+1},\be_{i-1}^{i+2},\ldots,\be_{i-1}^{g+n},\be_{i-1}^1,\be_{i-1}^2,\ldots,\be_{i-1}^{i-3} \}.
	\]
	It is easy to see that the pants decomposition $P$ satisfies the nonadjacency condition. By Lemma~\ref{nonadjacency},
	$\phi$ preserves nonadjacency in the adjacency graph of $P$, i.e. images of nonadjacent vertices in $P$
	are nonadjacent in $\phi(P)$. 
	
	\begin{figure}[h!]
\labellist
\small\hair 2pt
 	\pinlabel $\phi (\beta_{i-1}^{i+1})$ at 200 590
 	\pinlabel $\phi (\beta_{i-1}^{i+1})$ at 420 590
 	\pinlabel $\phi (\beta_{i-1}^{i+1})$ at 200 395
 	\pinlabel $\phi (\beta_{i-1}^{i+1})$ at 423 395
 	\pinlabel $\phi (\alpha_{i})$ at 80 473
 	\pinlabel $\phi (\alpha_{i})$ at 394 473
 	\pinlabel $\phi (\alpha_{i})$ at 98 278
 	\pinlabel $\phi (\alpha_{i})$ at 394 278
  	\pinlabel $\phi (\alpha_{i+1})$ at 230 473
 	\pinlabel $\phi (\alpha_{i+1})$ at 533 473
 	\pinlabel $\phi (\alpha_{i+1})$ at 230 278
 	\pinlabel $\phi (\alpha_{i+1})$ at 533 278
 \endlabellist		
  \centering
		\includegraphics[width=0.6\textwidth]{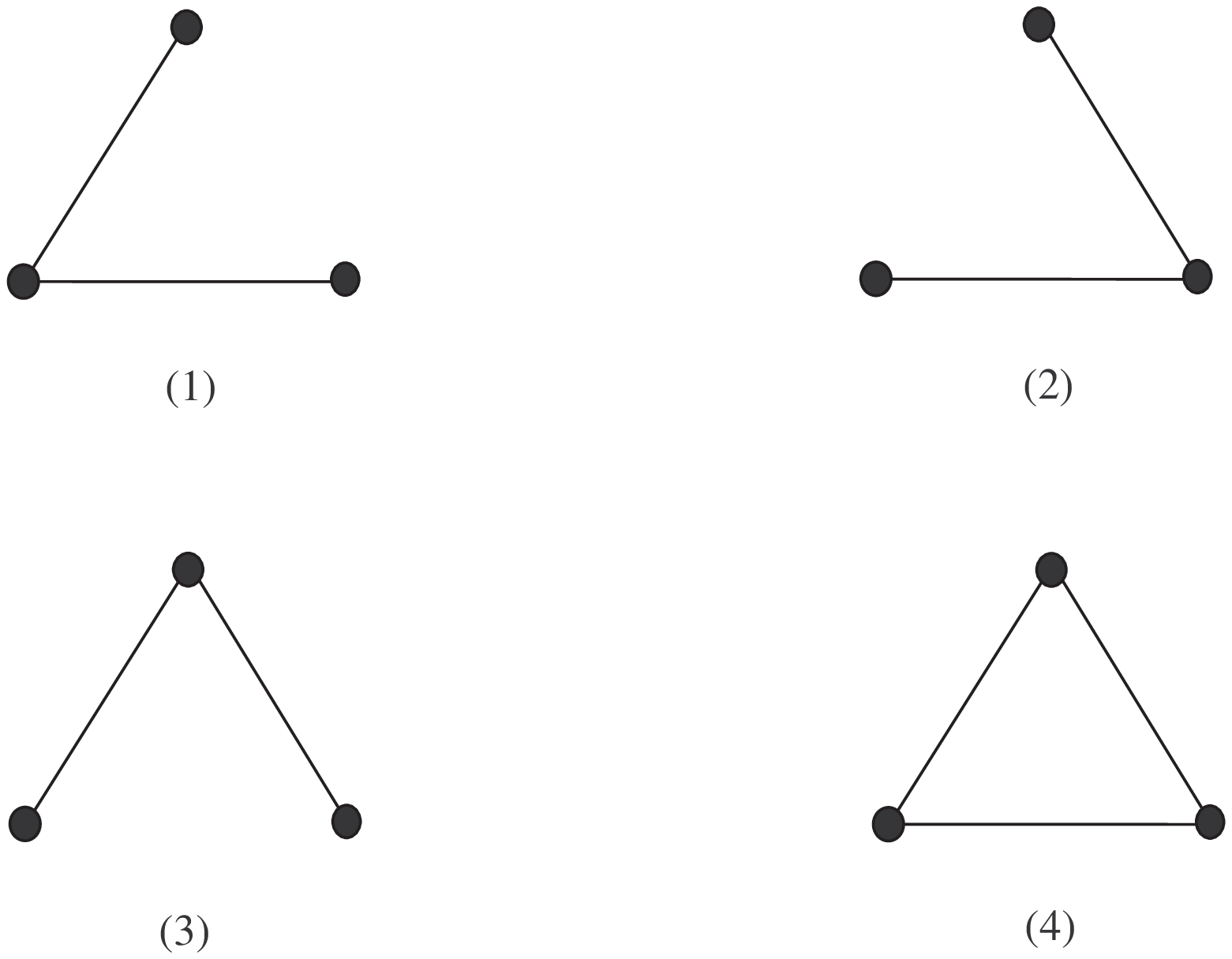}\\
		\caption{Possible configurations for $\phi(\al_i)$, $\phi(\al_{i+1})$ and 
			$\phi(\be_{i-1}^{i+1})$ in $\gapf$}\label{three}
	\end{figure}
	
	We note  that the vertices $\al_i,\al_{i+1}$ and $\be=\be_{i-1}^{i+1}$ 
	form a triangle in $\gap$ and $\be$ is the unique two-sided simple closed curve that is adjacent 
	to both $\al_i$ and $\al_{i+1}$ in $\gap$. 
	Since $\phi(\al_j)$ is essential one-sided for each $j=1,2,\ldots,g$ 
	and $\be$ is disjoint from all $\al_j$, $\phi(\be)$ is disjoint from 
	all $\phi(\al_j)$, and hence it is a two-sided curve. 
	Since $\phi$ preserves nonadjacency in $\gap$,  $\phi(\al_i)$ and $\phi(\al_{i+1})$ might only be 
	adjacent to each other and $\phi(\beta)$.
 	If it can be shown that the vertices $\phi(\al_i)$, $\phi(\al_{i+1})$ and 
	$\phi(\be)$ form a triangle in $\gapf$, 
	one can conclude easily that $\phi(\be)$ is of type $(2,0)$. 
	
	The pants decomposition $\phi(P)$ is top dimensional, so that its adjacency graph $\gapf$ 
	is connected.  The possible full subgraph of $\gapf$ with vertices   the vertices $\phi(\al_i)$, $\phi(\al_{i+1})$ and 
	$\phi(\be)$ is one of the four graphs  given in Figure~\ref{three}. 

	It is clear that in a pants decomposition, if two one-sided vertices $\gamma_1,\gamma_2$
	are adjacent, then any vertex adjacent to $\gamma_1$ is 
	also adjacent to $\gamma_2$.
	Since $\phi(\al_i)$ and $\phi(\al_{i+1})$ are 
	essential one-sided simple closed curves, the cases $(1)$ and $(2)$
	in Figure~\ref{three} is impossible. 
	
	We now rule out the case $(3)$.  If they form such a graph, then the surface $N$ is a Klein bottle 
	with two holes since $\phi(\alpha_i)$ and $\phi(\alpha_{i+1})$ is not adjacent to any vertex 
	different from $\phi(\beta)$.  But we have the assumption $g+n\geq 5$.
	
	Thus we have proved that the only possible configuration is $(4)$ and hence $\phi(\beta)$ 
	bounds a Klein bottle with one hole, i.e., it is of type $(2,0)$, finishing the proof of the lemma.
 \end{proof}

\section{Proof of the Main Result} \label{sec=mainthm}
In this section, we prove the main result of this paper, Theorem~\ref{mainxx}, also stated as Main Theorem in the introduction.
Let $N$ denote the non-orientable surface $N_{g,n}$.

\begin{lemma}
       \label{idenonstar}
 	Let $g+n\geq 5$.  If a locally injective simplicial map 
	$\phi : \kai_{g,n} \to \CC(N)$
	fixes every vertex of the star of $\alpha_i$ in $\kai_{g,n}$ for some $1\leq i\leq g$, then
	$\phi$ fixes every vertex of $ \kai_{g,n}$.
\end{lemma}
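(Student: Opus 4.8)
The plan is to exploit the rich combinatorial structure of $\kai_{g,n}$ together with the already-established rigidity of the links. Suppose $\phi$ fixes every vertex in the star of $\alpha_i$. The first observation is that the star of $\alpha_i$ contains a large portion of $\kai_{g,n}$: it contains $\alpha_i$ itself, all curves $\beta_j^k$ disjoint from $\alpha_i$ (which is most of $\kai_{g,n}^2$), and in particular it contains many curves that together with $\alpha_i$ complete top dimensional pants decompositions. Since $\phi$ is simplicial and injective on the star of $\alpha_i$ and fixes it pointwise, $\phi$ restricts to the identity on $\link_{\kai_{g,n}}(\alpha_i)$, which by the isomorphism $\varphi\colon \link_{\CC(N)}(\alpha_i)\to\CC(N_{\alpha_i})$ (the natural generalization of the map discussed after the definition of $\kai_{g,n}$, for arbitrary $i$ rather than just $i=g$) corresponds to the identity on $\kai_{g-1,n+1}\subset\CC(N_{\alpha_i})$.

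Next I would pin down $\phi(\alpha_j)$ for $j\neq i$ and $\phi(\alpha_j^k)$. For a one-sided curve $\alpha_j$ ($j\neq i$), note $\alpha_j$ is disjoint from $\alpha_i$, so $\alpha_j\in\link_{\kai_{g,n}}(\alpha_i)$ and is therefore already fixed. The genuinely new vertices to handle are those $\alpha_i^k$ and $\beta_j^k$ that intersect $\alpha_i$ — for these, $\phi$ being locally injective on the star of each of \emph{their} neighbors is what we use. The idea is: each such vertex $\gamma$ lies in the star of some vertex $\delta$ that is in $\link_{\kai_{g,n}}(\alpha_i)$, hence fixed; and enough fixed neighbors determine $\gamma$ uniquely because a curve is determined by a sufficiently large collection of curves disjoint from it (or by the constraint of completing several distinct pants decompositions). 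Concretely, for $\gamma=\alpha_i^k$, one picks a top dimensional pants decomposition $P$ containing $\alpha_i^k$ all of whose other members are disjoint from some fixed $\alpha_j$ or lie in $\kai_{g-1,n+1}$; since $\phi$ preserves that pants decomposition and fixes all members but $\gamma$, and since $\phi(\gamma)$ must be the unique curve of the correct topological type (using the type-preservation results of Section~\ref{sec:types}, Corollary~\ref{all-alpha_i}) completing it, we get $\phi(\gamma)=\gamma$. The same strategy handles each $\beta_j^k$ intersecting $\alpha_i$.

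The main obstacle I anticipate is the uniqueness step: showing that once $\phi$ fixes all of $\link_{\kai_{g,n}}(\alpha_i)$ plus all the $\alpha_j$, every remaining vertex is \emph{forced}. This requires that for each remaining $\gamma$ there are enough already-fixed curves — disjoint from $\gamma$, filling up a pants decomposition, and intersecting $\gamma$'s potential alternatives — to leave only one choice; equivalently, that the fixed set together with the curve-complex combinatorics of $N_{g,n}$ rigidly locates $\gamma$. In practice this is a finite case analysis over the (few) topological types of vertices in $\kai_{g,n}$, using the adjacency-graph bookkeeping from Section~\ref{prelim} and Lemma~\ref{nonadjacency}, combined with the basic fact that two distinct simple closed curves of the same type disjoint from a common pants decomposition minus one curve cannot both complete it unless they are isotopic. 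I would organize the proof by first reducing to $\link_{\kai_{g,n}}(\alpha_i)$ being fixed (immediate), then fixing all $\alpha_j$ (immediate, as above), then fixing each $\alpha_i^k$, and finally each $\beta_j^k$ that meets $\alpha_i$, checking in each case that a suitable pants decomposition with all-but-one member already fixed does the job.
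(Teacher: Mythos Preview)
Your outline is in the right spirit, but you have misread the combinatorics of $\kai_{g,n}$ in a way that makes the task look harder than it is. Every curve $\beta_j^k$ is represented by an arc joining two edges $e_j,e_k$ and can be drawn disjoint from the semicircle $s_i$; likewise every $\alpha_k^l$ with $k\neq i$ and every $\alpha_k$ is disjoint from $\alpha_i$. Hence the star of $\alpha_i$ in $\kai_{g,n}$ already contains \emph{all} vertices except the one-sided curves $\alpha_i^j$. There are no ``$\beta_j^k$ that intersect $\alpha_i$'' to handle, and no case analysis over topological types is needed.

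For the remaining curves $\alpha_i^j$, the paper's argument is much tighter than your pants-decomposition-plus-type-preservation plan. One writes down an explicit collection $P_1\cup P_2\cup P_3\subset \link_{\kai_{g,n}}(\alpha_i)$ of curves (so all are fixed by $\phi$) such that cutting $N$ along them yields a disjoint union of pairs of pants and a single component $U\cong N_{1,2}$. The surface $N_{1,2}$ has exactly two nontrivial isotopy classes of simple closed curves, namely $\alpha_i$ and $\alpha_i^j$. Since $\phi(\alpha_i^j)$ is disjoint from every curve in $P_1\cup P_2\cup P_3$ it must lie in $U$, and since $\alpha_i^j$ and $\alpha_i$ both lie in the star of any element of $P_1\cup P_2\cup P_3$, local injectivity gives $\phi(\alpha_i^j)\neq\alpha_i$; hence $\phi(\alpha_i^j)=\alpha_i^j$. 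This avoids entirely the ``uniqueness of completion'' step you flag as the main obstacle, and it does not invoke the type-preservation results of Section~\ref{sec:types} at all.
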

\begin{proof}
 All vertices of $ \kai_{g,n}$ but $\alpha_i^j$ are in the star of  $\alpha_i$. So we need to prove that
 $\phi(\alpha_i^j)=\alpha_i^j$ for all $j$. With the agreement that we take the indices modulo $g+n$,
 let us fix some $j$ with $j\notin\{ i-1,i\}$ and
 define the following subsets of $\kai_{g,n}$:
 \begin{enumerate}
     	\item $P_1=\{ \alpha_1, \alpha_2,\ldots, \alpha_{i-1}, \alpha_{i+1}, \alpha_{i+2},\ldots,\alpha_g   \}$,
 	\item $P_2=\{ \beta_{i-1}^{j}, \beta_{i-1}^{j+1}, \ldots, \beta_{i-1}^{g+n}, 
		\beta_{i-1}^{1},\beta_{i-1}^{2},\ldots,\beta_{i-1}^{i-3} \}$,
 	\item $P_3=\{ \beta_{i}^{i+2}, \beta_{i}^{i+3}, \ldots, \beta_{i}^{j} \}$.
 \end{enumerate}
 The surface $N$ cut along the curves in $P_1\cup P_2\cup P_3$ is a disjoint union
 of a number of pair of 
 pants and a non-orientable surface $U$ of genus one with two boundary components.
 There are only two nontrivial curves on $U$; $\alpha_i$ and $\alpha_i^j$. Since $\phi(\alpha_i^j)$ 
 lies on $U$ and is different from $\alpha_i$, we must have $\phi(\alpha_i^j)=\alpha_i^j$.
\end{proof}

\begin{theorem}\label{mainxx}
	Let $g+n\neq 4$.  The finite complex $\kai_{g,n} $ is rigid in
	$\CC(N)$. That is, any locally injective 
	simplicial map $\phi:\kai_{g,n} \rightarrow \CC(N)$ 
	is induced from an element  $F \in \Mod(N)$. 
	The mapping class $F$ is unique up to composition 
	with the involution that interchanges the two faces of the model of $N$.
\end{theorem}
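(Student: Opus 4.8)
The plan is to prove Theorem~\ref{mainxx} by induction on the genus $g$, using the fact that cutting along $\alpha_g$ identifies $\link_{\kai_{g,n}}(\alpha_g)$ with $\kai_{g-1,n+1}$. The base cases are $g=1$ (where the rigidity must be established directly, building on Theorem~\ref{thm:arle} of Aramayona--Leininger for $\kai_{0,n}$ together with the topological-type results of Proposition~\ref{g1ct}) and the low-complexity cases $g+n=3$; note $g+n=4$ is excluded. So assume $g\geq 2$, $g+n\geq 5$, and that the theorem holds for $N_{g-1,n+1}$.

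The first main step is to normalize $\phi$ on $\alpha_g$. By Corollary~\ref{all-alpha_i}, $\phi(\alpha_g)$ is essential one-sided, so there is a homeomorphism $h:N\to N$ with $h\phi(\alpha_g)=\alpha_g$; replacing $\phi$ by $\bar\phi=h\phi$, we may assume $\phi(\alpha_g)=\alpha_g$. Then $\bar\phi$ restricts to a locally injective simplicial map $\link_{\kai_{g,n}}(\alpha_g)\to \link_{\CC(N)}(\alpha_g)$, and via the isomorphism $\varphi:\link_{\CC(N)}(\alpha_g)\xrightarrow{\cong}\CC(N_{g-1,n+1})$ (which carries $\link_{\kai_{g,n}}(\alpha_g)$ onto $\kai_{g-1,n+1}$) we obtain a locally injective simplicial map $\psi=\varphi\bar\phi\varphi^{-1}:\kai_{g-1,n+1}\to \CC(N_{g-1,n+1})$. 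By the inductive hypothesis, $\psi$ is induced by a homeomorphism $G$ of $N_{g-1,n+1}$, unique up to the face-swapping involution. Composing $\bar\phi$ with a further homeomorphism realizing $\varphi^{-1}G^{-1}\varphi$ (which fixes $\alpha_g$ and is supported on $N_{\alpha_g}$), we may assume $\bar\phi$ fixes every vertex of $\link_{\kai_{g,n}}(\alpha_g)$, hence every vertex of the star of $\alpha_g$. By Lemma~\ref{idenonstar}, $\bar\phi$ then fixes every vertex of $\kai_{g,n}$. Unwinding the normalizations, the original $\phi$ equals $F$ on $\kai_{g,n}$ for a homeomorphism $F$ obtained as a composition of the normalizing homeomorphisms; this gives the existence statement.

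For uniqueness up to the involution, suppose $F_1,F_2\in\Mod(N)$ both induce $\phi$ on $\kai_{g,n}$. Then $F=F_2^{-1}F_1$ fixes every vertex of $\kai_{g,n}$ setwise; I want to conclude $F$ is either the identity or the face-swapping involution $\iota$. Since $F$ fixes the isotopy class of each $\alpha_i$ and each $\alpha_i^j$ and each $\beta_i^j$ in $\kai_{g,n}$, one argues (using that $\kai_{g,n}$ contains a top dimensional pants decomposition together with enough extra curves to pin down a marking, or again by the cutting-along-$\alpha_g$ induction reducing to the base cases, where Theorem~\ref{thm:arle} gives uniqueness up to orientation-reversal, i.e.\ up to $\iota$) that $F$ acts trivially on a filling collection of curves, so $F\in\{1,\iota\}$ in $\Mod(N)$. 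This is where I must be careful: a self-homeomorphism fixing each curve in $\kai_{g,n}$ need not be trivial a priori, and the correct statement is that the pointwise stabilizer of $\kai_{g,n}$ in $\Mod(N)$ is exactly $\{1,\iota\}$; establishing this (e.g.\ by showing $\kai_{g,n}$ fills $N$ and contains a pants decomposition, then appealing to the Alexander method for non-orientable surfaces) is the key technical point.

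The step I expect to be the main obstacle is the reduction that lets the inductive hypothesis be applied cleanly, namely verifying that after normalizing $\phi(\alpha_g)=\alpha_g$ the induced map $\psi$ on $\kai_{g-1,n+1}$ is genuinely locally injective and that the homeomorphism $G$ it produces can be lifted back to a homeomorphism of $N$ fixing $\alpha_g$ and agreeing with $\bar\phi$ on the whole link (not just up to isotopy of the cut surface). Keeping track of the face-swapping involution through each stage of the induction — so that the final ambiguity is exactly $\iota$ and not something larger — is the delicate bookkeeping; the remaining pieces (Corollary~\ref{all-alpha_i}, Lemma~\ref{idenonstar}, Theorem~\ref{thm:arle}, and the Alexander-method computation of the pointwise stabilizer) are available and largely mechanical to assemble.
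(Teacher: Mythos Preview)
Your overall strategy matches the paper's: induct on $g$, normalize so $\phi(\alpha_g)=\alpha_g$, pass to the link, apply the inductive hypothesis on $N_{\alpha_g}\cong N_{g-1,n+1}$, then lift and finish with Lemma~\ref{idenonstar}. However, there is a genuine gap precisely at the lifting step you flag but do not resolve.

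When you write that you compose with ``a further homeomorphism realizing $\varphi^{-1}G^{-1}\varphi$ (which fixes $\alpha_g$ and is supported on $N_{\alpha_g}$)'', you are implicitly assuming that the homeomorphism $G$ of $N_{\alpha_g}$ extends to a homeomorphism of $N$ fixing $\alpha_g$. This is only possible if $G$ preserves the distinguished boundary component $\Delta$ of $N_{\alpha_g}$ coming from $\alpha_g$; otherwise $G$ would send $\Delta$ to a genuine boundary component of $N$ and there is no way to glue. Nothing in the inductive hypothesis guarantees $G(\Delta)=\Delta$: the homeomorphism $G$ is produced by rigidity on $N_{g-1,n+1}$, which treats all boundary components symmetrically. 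Establishing $G(\Delta)=\Delta$ is the heart of the inductive step, and it is exactly here that the paper invokes the topological-type results you never use: Lemma~\ref{lemma(2,0)} (that $\phi$ preserves curves of type $(2,0)$) for $g\geq 2$, and Proposition~\ref{g1ct} for $g=1$. The argument is that if $G(\Delta)\neq\Delta$, then a specific curve in $\kaign$ (namely $\beta_{g-2}^g$ when $g\geq 2$, or $\beta_1^n$ when $g=1$) would be sent by $\phi$ to a curve of the wrong topological type in $N$, contradicting those lemmas. Without this, your induction does not close.

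A secondary point: the paper takes $g=0$ (Theorem~\ref{thm:arle}) as the base of the induction, not $g=1$, and handles the low-complexity cases $g+n\leq 3$ by inspection. Your proposal to establish $g=1$ ``directly'' is not wrong, but it would duplicate work, since the same $\Delta$-preservation issue already arises there and again requires Proposition~\ref{g1ct}.
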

\begin{proof}
       It is easy to see that the sets 
     \begin{itemize}
      \item[]  $\kai_{1,0}=\{\al_1\}=\CC(N_{1,0})$,
       \item[] $\kai_{1,1}=\{\al_1\}=\CC(N_{1,1})$,
       \item[]  $\kai_{1,2}=\{\al_1,\al_1^2\}=\CC(N_{1,2})$,
       \item[]  $\kai_{2,0}=\{\al_1,\al_2\}$,
       \item[]  $\kai_{2,1}=\{\al_1,\al_2\}$,
       \item[]  $\kai_{3,0}=\{\al_1,\al_2,\al_3\}$
        \end{itemize}   
   are rigid. Hence suppose that $g+n\geq 5$.

	The proof is by induction on the genus $g$. Recall that we consider a sphere 
	as a non-orientable surface of genus $0$.  The base step, the case $g=0$ and $n\geq 5$,  
	is proved in~\cite[Theorem~3.1]{aramlei13}, stated as Theorem~\ref{thm:arle}
	above.
	So assume that $g\geq1$ and that any locally injective simplicial map 
	$\kai_{g-1,m} \rightarrow \CC(N_{g-1,m})$ 
	is induced from a homeomorphism $N_{g-1,m} \rightarrow N_{g-1,m}$ if $(g-1)+m\geq 5$.
	
	Let $\phi:\kai_{g,n} \rightarrow \CC(N)$ be a locally injective simplicial 
	map. The curve $\phi(\al_g)$ is a 	one-sided essential simple closed curve 
	by Lemma~\ref{cloct} if $n=0$ and by Corollary~\ref{all-alpha_i} if $n\geq 1$.
	By the classification of surfaces, 
	there exists a homeomorphism 
	$h:N \rightarrow N$ such that $h(\phi(\al_g))=\al_g$.  Since $\phi$ is induced 
	from a homeomorphism of $N$ if and only if $h\phi$ is, we 
       may assume, by replacing $h\phi$ by $\phi$, that $\phi(\al_g)=\al_g$.
       The map $\phi$ then induces a locally injective simplicial map 
       \[
       \link_{\kai_{g,n}}(\alpha_g)\to \link_{\CC(N)} (\alpha_g).
       \]

	The surface $N_{\al_g}$ obtained by cutting $N$ along $\al_g$ 
	is a non-orientable surface of 	genus $g-1$ with $n+1$ holes. As in the proof of
	Corollary ~\ref{all-alpha_i}, consider the injective simplicial map
	 $\varphi : \link_{\CC(N)} (\alpha_g) \to \CC(N_{\alpha_g})$ induced by cutting
	 $N$ along $\alpha_g$ and the commutative diagram
	 \begin{equation*}
	\begin{split}
	\xymatrixcolsep{4pc} \xymatrix @-0.1pc
	{
		\link _{\kaign}(\alpha_g)\ar[r]^-{\phi}  \ar[d]^-{\varphi} & \link_{\CC(N)}(\alpha_g) \ar[d]^-{\varphi}\\
		\kai_{g-1,n+1}\ar[r]^-{\bar{\phi}} & \CC(N_{\alpha_g}) ,&
	}
	\end{split}
	\end{equation*}	 
	 so that $\bar{\phi}=\varphi\phi\varphi^{-1}$ is a locally injective simplicial map. 
	It is easy to see that the map $\varphi$ on the left-hand side  of the diagram is an isomorphism.
	 
	 By the induction hypothesis, since $(g-1)+(n+1)\geq 5$,
	 there is a homeomorphism $f: N_{\alpha_g}\to  N_{\alpha_g}$ such that 
	 $f(\gamma')=\bar{\phi}(\gamma')$ for every curve $\gamma'\in \kai_{g-1,n+1}$ 
	 on $ N_{\alpha_g}$. That is, $f(\varphi(\gamma))= \varphi(\phi(\gamma))$ for every 
	 vertex $\gamma \in \link _{\kaign}(\alpha_g)$.

	 The surface $ N_{\alpha_g}$ has a distinguished boundary component
	  $\Delta$, coming from $\al_g$. We claim that the homeomorphism $f$ 
	  maps $\Delta$ to itself.  
	
	Case $g=1$: Suppose that $f(\Delta)\neq \Delta$. 
	 The curve $\varphi(\beta_1^{n})$ bounds a disc with two holes $\Delta$ and $z_n$ in 
	 $N_{\alpha_1}$, and $\varphi(\beta_2^{n})$ bounds a disc with two holes $\Delta$ and 
	 $z_1$ 
	 (Here, $z_i$ represents the boundary of a small neighborhood of the $i$th puncture.). 
	 Thus $f(\varphi(\beta_1^{n}))=\varphi(\phi( \beta_1^{n}))$ bounds a disc with 
	 two holes $f(\Delta)$ and $f(z_n)$ in	$N_{\alpha_1}$ and 
	 $f(\varphi(\beta_2^{n}))=\varphi(\phi( \beta_2^{n}))$ bounds a disc with 
	 two holes $f(\Delta)$ and $f(z_1)$. Note that at least one of $f(z_1)$ and $f(z_n)$ is not 
	 equal to $\Delta$. Without loss of generality assume that $f(z_n)\neq \Delta$, so that
	 none of the two holes on the disk bounded by $\varphi(\phi( \beta_1^{n}))$ is $\Delta$.
	 It follows that $\phi( \beta_1^{n})$ is of type $(0,2)$.
	 
	On the other hand, by Proposition~\ref{g1ct},  the curve $\phi( \beta_1^{n})$ is of 
	type $(0,n-1)$. Since $n\geq 4$, this is a contradiction. Therefore, we have 
	$f(\Delta)=\Delta$. (If $f(z_1)\neq \Delta$, then use the curve $\beta_2^{n+1}$).
		
	Case $g\geq2$: If $n=0$, then trivially  $f(\Delta)=\Delta$.
	Let $n\geq1$. Assume that $f(\Delta)\neq \Delta$, so that $f(\Delta)$ is a boundary 
	component of $N_{\alpha_g}$ coming from a boundary component of $N$.
	Consider the curve $\beta_{g-2}^{g}$ in the link of $\alpha_g$. 
	Since $\varphi(\beta_{g-2}^{g})$ is of type $(1,1)$ in $N_{\alpha_g}$,
	$f(\varphi (\beta_{g-2}^{g}))=\varphi (\phi(\beta_{g-2}^{g}))$ is of type $(1,1)$ 
	in $N_{\alpha_g}$. Since the hole  on the genus $1$ component of 
	$N_{\alpha_g}-\{ \varphi (\phi(\beta_{g-2}^{g})) \}$ comes from a hole of $N$, 
	$\phi(\beta_{g-2}^{g})$ is of type $(1,1)$ in $N$.  

	On the other hand, since $\beta_{g-2}^{g}$ is of type
	$(2,0)$, so is $\phi(\beta_{g-2}^{g})$ by Lemma~\ref{lemma(2,0)}.  Since $N$ is not a
	non-orientable surface of genus $3$ with one hole, this is a contradiction. 
	By this contradiction we must have $f(\Delta)=\Delta$.
	
	By applying a suitable isotopy on the regular neighborhood of the
	boundary component $\Delta$, 
	one may assume that $f$ sends antipodal points on $\Delta$ to antipodal points. 
	Hence, $f$ descends a homeomorphism $F:N \rightarrow N$ with the property that
	$F(\alpha_g)=\alpha_g$ and $F(\gamma)=\phi(\gamma)$ 
	for every vertex $\gamma \in \link_{\kai_{g,n}}(\alpha_g) $ 

      Now, the automorphism $F^{-1}\phi$ of $\CC(N_{g,n})$ fixes
      $\alpha_g$ and all curves in $\kai_{g,n} $ disjoint from $\alpha_g$. By 
       Lemma~\ref{idenonstar}, $F^{-1}\phi$ fixes every vertex of $\kai_{g,n} $.
       Thus, $\phi$ is induced by the homeomorphism $F:N\to N $.
       
       If $G$ is another homeomorphism inducing $\phi$, then $G^{-1}F$ fixes all curves of
       $\kai_{g,n} $. It can be seen easily that the stabilizer of $ \kai_{g,n} $ 
       in the mapping class group is a cyclic subgroup of order two generated by the
       involution interchanging the two copies of $R$ used as a model on $N$.
       
       The proof of the theorem is now complete.
\end{proof}

\begin{corollary} 
	Let $g+n\neq 4$. Every locally injective simplicial map $\kai_{g,n} \rightarrow \CC(N)$
	is induced by a simplicial automorphism $\CC(N) \rightarrow \CC(N)$.
\end{corollary}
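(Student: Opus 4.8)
The plan is to read this off immediately from Theorem~\ref{mainxx}. Given a locally injective simplicial map $\phi:\kai_{g,n}\to\CC(N)$ with $g+n\neq 4$, Theorem~\ref{mainxx} supplies a homeomorphism $F:N\to N$ satisfying $\phi(\gamma)=F(\gamma)$ for every $\gamma\in\kai_{g,n}$. Since $F$ is a homeomorphism, it carries nontrivial simple closed curves to nontrivial simple closed curves, respects isotopy, and preserves geometric intersection numbers; in particular it sends disjoint curves to disjoint curves and distinct isotopy classes to distinct isotopy classes. Hence $F$ induces a simplicial map $F_*:\CC(N)\to\CC(N)$, and $(F^{-1})_*$ is a simplicial inverse for it, so $F_*$ is a simplicial automorphism of $\CC(N)$.

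The automorphism $F_*$ restricts on the subcomplex $\kai_{g,n}$ to $\gamma\mapsto F(\gamma)=\phi(\gamma)$, so $\phi=F_*|_{\kai_{g,n}}$; that is, $\phi$ is induced by the simplicial automorphism $F_*$, which is exactly the assertion of the corollary. Nothing in the argument forces $g+n\geq 5$: the remaining cases $g+n\le 3$ permitted by the hypothesis are precisely the sporadic ones listed at the beginning of the proof of Theorem~\ref{mainxx}, and there too $\phi$ comes from a homeomorphism and hence from the induced automorphism of $\CC(N)$.

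There is essentially no obstacle; the entire content lies in Theorem~\ref{mainxx}, and the corollary is a restatement of it using only the elementary fact that homeomorphisms act on curve complexes by simplicial automorphisms. If one wishes, when $g+n\geq 5$ one may further invoke Theorem~\ref{mod=aut} to identify $F_*$ with the image of the mapping class $[F]\in\Mod(N)$ under the isomorphism $\Mod(N)\to\Aut(\CC(N))$, but this identification is not needed for the statement as phrased.
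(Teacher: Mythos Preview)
Your proof is correct and follows exactly the paper's own argument: invoke Theorem~\ref{mainxx} to obtain a homeomorphism $F$ inducing $\phi$, then note that $F$ induces a simplicial automorphism of $\CC(N)$ extending $\phi$. The additional remarks you make (spelling out why $F_*$ is an automorphism, handling the sporadic cases, and the optional appeal to Theorem~\ref{mod=aut}) are sound elaborations but not needed beyond what the paper records.
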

\begin{proof}
      By Theorem~\ref{mainxx},
      a locally injective simplicial map $\phi:\kai_{g,n} \rightarrow \CC(N)$ is induced by a homeomorphism of $N$. 
      This homeomorphism induces a simplicial automorphism of $\CC(N) \rightarrow \CC(N)$ that coincides with 
      $\phi$ on $\kai_{g,n}$. 
\end{proof}

\section{$\kaign$ for $g+n=4$}\label{sec:son}
  	In this section,  we show that the set $\kai_{g,n}$ is not rigid in the case 
	$g+n=4$. Let us define the following subsets $A_{g}$ and $B_{g}$ of $\kai_{g,n}$
	such that $A_{g} \cup B_{g}=\kai_{g,n}$:
     \[
\begin{array}{ll}
  A_{1}=\{\al_1, \al_{1}^2,\be_2^4\},   & B_{1}=\{ \al_{1}^3,\be_1^3\},  \\
  A_{2}=\{\al_1,\al_2, \al_{1}^2,\al_{2}^4,\be_2^4\}, &    B_{2}=\{ \al_1^3,\al_2^3,\be_1^3\},  \\
  A_3=\{\al_1,\al_2,\al_3, \al_1^2,\al_2^4,\al_3^4,\be_2^4\},  &   B_{3}=\{ \al_1^3,\al_2^3,\al_3^1,\be_1^3\},\\
  A_{4}=\{\al_1,\al_2,\al_3,\al_4, \al_1^2,\al_2^4,\al_3^4,\al_4^2,\be_2^4\}, &
      			B_{4}=\{ \al_1^3,\al_2^3,\al_3^1,\al_4^1,\be_1^3\}.   
\end{array}
\]
  
   If $f:N_{g,n}\to N_{g,n}$ is a homeomorphism fixing the one-sided curves $\al_1,\al_2,\ldots,\al_g$,
   then the map $\phi: \kai_{g,n}\to\CC(N_{g,n})$ 
   defined by 
   \[ \phi(\gamma)= 
   \begin{cases} 
      \gamma & \mbox{if } \gamma\in A_{g} \\
        f(\gamma) & \mbox{if } \gamma\in B_{g} \\
   \end{cases}
\]   
   is a locally injective simplicial map.   
   For almost any choice of $f$ (for instance, $f$ is the product 
   $t_{\beta_2^4}t_{\beta_1^3}t_{\beta_2^4}$ of the Dehn twists), the map $\phi$ is 
   not induced by a homeomorphism of the surface.

 Of course, this leaves the question of the existence of a finite rigid set open for the cases 
	$g+n=4$.

\vspace{0.2in}

\end{document}